\title{Aggregation-diffusion phenomena: from microscopic models to free boundary problems}
\author[I. Kim]{Inwon Kim}
\address{Department of Mathematics, UCLA,  Los Angeles, CA} 
\email{ikim@math.ucla.edu}
\author[A. Mellet]{Antoine Mellet}
\address{Department of Mathematics, University of  Maryland, College Park, MD}
\email{mellet@umd.edu}
\author[J. S.-H. Wu]{Jeremy Sheung-Him Wu}
\address{Department of Mathematics, UCLA,  Los Angeles, CA}
\email{jeremywu@math.ucla.edu}
\thanks{I. Kim was partially supported by NSF Grant DMS-2153254. \\
A. Mellet was partially supported by NSF Grant DMS-2009236 and DMS-2307342.
}
\def\R{\mathbb R}
\def\eps{\varepsilon}
\def\e{\varepsilon}
\def\P{\mathcal P}
\def\vphi{\varphi}
\def\pa{\partial}
\def\na{\nabla}
\def\div{\mathrm{div}\,}
\def\supp{\mathrm{Supp}\,}
\def\BV{\mathrm{BV}}
\def\ds{\displaystyle}
\newcommand{\E}{\mathscr{E}}
\newcommand{\J}{\mathscr{J}}
\newcommand{\G}{\mathscr{G}}
\newcommand{\F}{\mathscr{F}}
\def\H{\mathcal H}
\def\P{\mathcal P}
\numberwithin{equation}{section}
\newtheorem{theorem}{Theorem}[section]
\newtheorem{theorem*}{Theorem}
\newtheorem{lemma}[theorem]{Lemma}
\newtheorem{proposition}[theorem]{Proposition}
\begin{document}

\begin{abstract}
This paper reviews (and expands) some recent results on the modeling of aggregation-diffusion phenomena at various scales, focusing on the emergence of collective dynamics as a result of the competition between attractive and repulsive phenomena -  especially (but not exclusively) in the context of attractive chemotaxis phenomena.

At microscopic scales, particles (or other agents) are represented by spheres of radius $\delta>0$ and we discuss both soft-sphere models (with a pressure term penalizing the overlap of the particles) and hard-sphere models (in which overlap is prohibited). The first case leads to so-called ``blob models" which have received some attention recently as a tool to approximate non-linear diffusion by particle systems. The hard-sphere model 
is similar to a classical model for congested crowd motion. 
We will review well-posedness results for these models and discuss their relationship to classical continuum description of aggregation-diffusion phenomena in the limit $\delta\to0$: the classical nonlinear drift diffusion equation and its incompressible counterpart.

In the second part of the paper, we discuss  recent results on the emergence and evolution of sharp interfaces when a large population of particles is considered at appropriate space and time scales:
At some intermediate time scale, phase separation occurs and a sharp interface appears which evolves according to a Stefan free boundary problem  (and the density function eventually relaxes to a characteristic function - metastable steady state for the original problem). At a larger time scale the attractive forces lead to surface tension phenomena and the evolution of the sharp interface can be described by a Hele-Shaw free boundary problem with surface tension. At that same time scale, we will also discuss the emergence of contact angle conditions for problems set in bounded domains. 

\end{abstract}

\maketitle

\setcounter{tocdepth}{1}

\tableofcontents

\section{Introduction}
\subsection{Aggregation-diffusion phenomena}
The study of emergent collective dynamics in large systems of interacting agents
has applications in many areas of applied sciences and has been  the source of many interesting mathematical problems for decades.
Classical examples in mathematical biology include the 
collective dynamics of micro-organisms such as cells or amoeba via chemotaxis \cite{Keller_Segel,Patlak}
or that of larger living organisms (insect swarms, the motion of animal herds or human crowds) \cite{TBL,BT16,MRSV}.
Throughout this paper, we will refer to these organisms as active particles
and assume that they are experiencing two competing effects: repulsion, modeled by nonlinear diffusion (or even density constraint)  and attraction, modeled by nonlocal interactions. 
The simple  mathematical model at the center of this paper is the following aggregation-diffusion equation
\begin{equation}\label{eq:AD}
 \pa_t \rho + \div(\rho \na (G* \rho ) ) = \div(\rho \na f'(\rho)) \qquad \mbox{ in } [0,\infty)\times\R^d, \quad \rho(t=0) = \rho_{in},
\end{equation}
which describes the evolution of the density of particles $\rho(t,x)$ and 
provides a continuum description of the particle systems at some macroscopic scale (see \cite{CCY19} and reference therein). 

The non-linearity $\rho\mapsto f(\rho)$ describes the repulsive force which models a natural volume exclusion principle (or incompressibility of the particles) and will play a central role throughout this paper. 
The standard linear diffusion corresponds to $f(\rho)=\rho\log\rho$, but  we will consider pressure laws which penalize values of the density above a critical threshold $\rho^*$ and lead to degenerate diffusion for small values of $\rho$. Taking  $\rho^*=1$ (to simplify the notations), we will focus on the following power-law:
\begin{equation}\label{eq:fm} 
f_m(\rho): = \frac{\rho^m}{m-1}  , \qquad \forall \rho\geq0, \qquad \qquad m>1,
\end{equation}
and its limit $m\to\infty$ (which enforces the congestion constraint $\rho\leq 1$):
\begin{equation}\label{eq:finfty} 
f_\infty(\rho): = \begin{cases} 0 & \mbox{ when } \rho\in [0,1],  \\ \infty & \mbox{ when $\rho>1$}\end{cases}
\end{equation}
(in that case \eqref{eq:AD} should be written as \eqref{eq:LHS}).
More general convex functions can be considered as well.

The interaction kernel $G$ in \eqref{eq:AD} describes long range interactions. 
We will focus our analysis on kernels that are radially symmetric, {\bf non-negative} and {\bf integrable}:
$$
G=G(|x|) \geq 0, \qquad G\in L^1(\R^d).
$$
In many applications, the interaction kernels will be radially decreasing\footnote{Radially decreasing kernels model an attractive force between particles.} at least for large $|x|$ though we do not need such an assumption. 
One can think of the potential $\phi=G*\rho$ as a chemical sensing function and the drift term in \eqref{eq:AD} models the motion of the agents toward higher values of that function.
An important example of an attractive kernel is  $G$ given by the equation
\begin{equation}\label{eq:G}
\sigma G - \eta \Delta G =  \delta_{x=0} \qquad \mbox{ in } \R^d, \qquad \lim_{|x|\to\infty} G(x) =0,
\end{equation}
which satisfies $G(x)\sim |x|^{2-d}$ when $|x|\to 0$ and $\int_{\R^d} G(x)\, dx =\sigma^{-1}$.
When $f$ is given by \eqref{eq:fm} and $G$ by \eqref{eq:G}, Equation
 \eqref{eq:AD} is the classical (parabolic-elliptic)  Patlak-Keller-Segel model (PKS) for chemotaxis \cite{Keller_Segel,Patlak} which describes the collective motion of cells that are attracted by a self-emitted chemical substance (we refer to Hillen and Painter~\cite{HP} for a review on various modeling aspects): 
\begin{equation}\label{eq:KellerSegel}
\begin{cases}
\pa_t \rho + \div(\rho \na \phi ) = \Delta \rho^m, \\
\sigma \phi - \eta \Delta \phi =\rho.
\end{cases}
\end{equation}
While we will not exclusively focus on this model, all the results presented here will hold in particular for \eqref{eq:KellerSegel} when $m>2$. 
 
 \medskip

\noindent{\bf Purpose of this paper:}
Aggregation-diffusion equations such as \eqref{eq:AD} have been an active area of mathematical research for the past few decades. 
Well-posedness, blow-ups, long time behavior, steady states and energy minimizers have all been studied intensely and we will not attempt to present a complete picture of the existing theory
(see \cite{BR,BRB,CC06,CHVY,CCY19,DXY,K05,Sugiyama} and references therein).
Instead, our focus will be on two recent trends in the analysis of these phenomena at both smaller and larger scales than the continuum model \eqref{eq:AD}.
 
In the first part of the paper, we will describe a microscopic model for aggregation  phenomena that includes a repulsive term taking into account the finite size of the particles, modeled by balls of radius $\delta>0$. When $f$ is given by \eqref{eq:fm}, this model is similar to the blob-method introduced for approximating  nonlinear diffusion equations \cite{BE23,CCP19,carrillo2023nonlocal,CB16,CEHT23,LM01}. 
Since this model allows for the overlap of the balls, we will refer to it as a soft-sphere model.
We will also consider a hard-sphere microscopic model, which includes a strict non-overlapping condition, and which has been used to model congested crowd motion \cite{FM,KMW,M18,MRS,MRSV,Maury11}. 
At the macroscopic scale, this hard-sphere model formally corresponds to \eqref{eq:AD} with $f$ given by \eqref{eq:finfty} and is related to Hele-Shaw free boundary problems.
The purpose of this first part of the paper  is thus  to review these microscopic models (their derivation and their properties) and to discuss the connection with the continuum model \eqref{eq:AD}.

In the second part of this paper, we will consider these same phenomena at a larger scale:  When $m\in (2,\infty]$, we will show that equation \eqref{eq:KellerSegel}  leads  to phase separation (or phase segregation) and to the emergence of sharp interfaces.
Phase separation, or the ability of a given population to organize itself into segregated groups with sharp edges, is an important feature of biological aggregation. Examples include animals moving as a homogeneous patch~\cite{PE,LDR} (e.g. insect swarms), pattern formation in cell tissues~\cite{VS,CMSTT}, or 
the formation of membraneless organelles in eukaryotic cells\footnote{Membraneless organelles are mesoscopic structures found in the nucleus and cytoplasm of cells, characterized by higher protein density and weaker molecular motion than the surrounding medium, but not separated from it by a membrane. They play a crucial role in the cell's operations by allowing for increased rates of biochemical reactions.} \cite{B09,HN,MK}. 
The fact that attractive nonlocal interactions can explain this phenomena has been previously pointed out \cite{TBL,FBC} and we present here a rigorous approach based on the recent work \cite{KMW2,M23} to derive these phenomena from \eqref{eq:AD} in regimes corresponding to a very large population of particles observed from far away over large time scales (or, equivalently, small attraction range).
Characterizing the dynamics of these interfaces, and thus describing the collective behavior of the particles after aggregation, is a fascinating and challenging problem and we will present some recent results in this direction:
At some appropriate time scale, the evolution of this interface is described by free boundary problems (Stefan and Hele-Shaw), which include surface tension effects as well as contact angle conditions and which are reminiscent of classical models for the evolution of interfaces in fluid dynamics.

\subsection{Finite time blow-up VS global well-posedness}
Before presenting these various models, we recall that 
since the attractive potential $G$ leads to concentration, the well-posedness of \eqref{eq:AD} globally in time is not obvious. 
Preventing blow-ups (the formation of Dirac masses) in the mathematical model is relatively natural from a modeling point of view since one would expect that the particles (if they represent living organisms) have some built-in incompressibility which prevents overlap and thus limits the maximum density.
For the classical PKS model \eqref{eq:KellerSegel}, it is known that the standard {\bf linear diffusion} ($m=1$)  is not enough to prevent finite time blow-up, but that for $m>m_c := 2-\frac{2}{d}$  solutions remain bounded in $L^\infty$ uniformly in time (see \cite{CC06,Sugiyama,BRB,BR}) and global well-posedness holds for this equation. 
In the second half of the paper we assume that $m>2$. Under this stronger condition, we show that solutions of   \eqref{eq:KellerSegel} are not only bounded, but experience phase separation phenomena.

\medskip

\subsection{Micro, macro and geometric models}
We now present the various  models that will be discussed in this paper. 
In order to keep this introduction relatively short, we did not include references below. 
Relevant references for each of these models will be discussed in the corresponding sections of the paper.
Figure \ref{fig:intro} (at the end of this section) summarizes the relationship between the various models.
\medskip

\noindent\underline{Microscopic ``agent based'' models (soft-sphere and hard-sphere)}
At the microscopic level with $N\gg1$ particles, each particle is represented by a ball $B_\delta(x_i(t))$  with radius $\delta>0$, whose center $x_i(t)$ evolves according to a first order ODE for $i=1,\dots,N$. In the soft-sphere model, the overlap of each ball $B_\delta(x_i(t))$ is allowed, but penalized, resulting in a repulsive force depending on the local density. This leads to the following microscopic model (see Section \ref{sec:blob}):
$$
\dot{x}_i (t) 
 =  \frac 1 N \sum_{j=1}^N \na \widetilde G_\delta    (x_j(t)-x_i(t)) - \na\int K_\delta (x_i(t) -y) f'\left(\frac 1 N \sum_{j=1}^N  K_\delta (y-x^j(t))\right) \, dy, \quad \forall i=1,\dots,N,
$$
where $K_\delta=\frac{1}{|B_\delta|} \chi_{B_\delta(0)}$ (or another approximation of unity supported in $B_\delta(0)$) and  $\widetilde G_\delta    = K_\delta* G*K_\delta$. The corresponding empirical distribution 
$\rho(t,x) = \frac 1 N \sum_{i=1}^N \delta(x-x_i(t))$ solves the non-local PDE
\begin{equation}\label{eq:NLSS}
\pa_t \rho + \div(\rho \na (\widetilde G_\delta   * \rho ) ) = \div( \rho  \na K_\delta*f'( K_\delta*\rho)) \qquad \mbox{ in } [0,\infty)\times\R^d
\end{equation}
with initial condition
\begin{equation}\label{eq:init}
\rho(0,x) = \frac 1 N \sum_{i=1}^N \delta(x-x_i(0)).
\end{equation}
Similar equations and corresponding systems of ODEs have been introduced and used in particular to develop numerical methods for nonlinear diffusion models.
In Section \ref{sec:blob}, we will recall several results concerning the well-posedness of  \eqref{eq:NLSS} for general initial data (not necessarily empirical distributions) and review convergence results as $N\to\infty$ (mean-field limit - see Theorem \ref{thm:stab}) and $\delta\to0$ (Theorem~\ref{thm:convdelta}).
\medskip

The microscopic hard-sphere model is a similar model in which 
 overlap is not permitted: The constraint 
 $$| x_i(t)-x_j(t)|\geq 2\delta, \qquad \forall i,j=1,\dots,N, \qquad i\neq j,$$ 
is strictly enforced, leading to a well-posed system of ODE with constraints (see Section \ref{sec:micro}). 
A corresponding PDE can be obtained formally by taking the limit $m\to\infty$ in \eqref{eq:NLSS}, leading to the following equation (see Section \ref{sec:HS}):
 \begin{equation}\label{eq:NLHS}
%\begin{cases}
\pa_t \rho + \div(\rho \na (\widetilde G_\delta   * \rho ) ) = \div( \rho  \na K_\delta* p), \quad p\in \pa f_\infty(K_\delta*\rho)\qquad \mbox{ in } [0,\infty)\times\R^d,
%\end{cases}
\end{equation}
which includes a pressure function $p(t,x)$ playing the role of a Lagrange multiplier for the non-local constraint $K_\delta*\rho \leq 1$. The condition $p\in \pa f_\infty(K_\delta*\rho)$\footnote{$\pa f_\infty$ denotes the subdifferential of the convex function $f_\infty$ and is sometimes called the {\it Hele-Shaw graph}. It is a multi-valued function given by
$$
 \pa f_\infty(\rho):=\begin{cases} 0 & \mbox{ if } \rho<1 \\ [0,\infty) & \mbox{ if } \rho=1 \\ \infty & \mbox{ if } \rho>1\end{cases}
$$
}  is a shorthand notation for the three conditions
$$
K_\delta*\rho (t,x)\leq 1, \qquad p(t,x)\geq 0\, \qquad p(t,x)(1-K_\delta*\rho(t,x))=0 \qquad \mbox{ a.e.}
$$
(note that \eqref{eq:NLSS} can be written in a similar form with $p=\pa f(K_\delta*\rho)$). However, while \eqref{eq:NLHS} is indeed related to a well-posed microscopic model when the initial condition is an empirical distribution \eqref{eq:init}, we will see that its well-posedness for general initial conditions is far from clear (see Section \ref{sec:mm}).

\medskip

\noindent\underline{Macroscopic models (soft-sphere and hard-sphere).}
In the limit $\delta\to 0$,  Equation \eqref{eq:NLSS} leads to the classical aggregation-diffusion equation which we recall here:
\begin{equation}\label{eq:LSS}
\pa_t \rho + \div(\rho \na (G* \rho ) ) = \div( \rho  \na  f'( \rho))\qquad \mbox{ in } [0,\infty)\times\R^d,
\end{equation}
while the hard-sphere model \eqref{eq:NLHS} yields:
\begin{equation}\label{eq:LHS}
\pa_t \rho + \div(\rho \na (G * \rho ) ) = \div( \rho  \na   p), \qquad p\in \pa f_\infty(\rho)\qquad \mbox{ in } [0,\infty)\times\R^d .
\end{equation}
A similar  equation was introduced as a model for congested crowd motion and some variations of it were derived in many frameworks, in particular as mechanical models for tumor growth. 
Interpretations and properties of this model are recalled in Sections \ref{sec:hs}-\ref{sec:proj}.
It is closely related to the classical Hele-Shaw free boundary problem with active potential (see Section \ref{sec:hsFBP}). 

\medskip

\noindent \underline{Geometric models: Phase separation and free boundary problems.}
The second half of this paper is devoted to phase separation, sharp interface limits and the derivation of free boundary problems from  \eqref{eq:LSS}  when $f$ is given by \eqref{eq:fm} with $m>2$ or from \eqref{eq:LHS}.
The {\it sharp interface limit} corresponds to the regime in which the size of the support of $\rho$ is very large compared to the interfacial region, and we will derive mathematical models for the evolution of this interface.
The results of Sections \ref{sec:FBP} - \ref{sec:ca} can be summarized as follows:
When the total mass $\int \rho\, dx =: \eps^{-d}$ is very large ($\eps\ll1$), we observe the apparition of an interface separating regions of low and high densities, which happens at time scale of order $\eps^{-2}$. After that time, the evolution of the interface can be described by a {\it Stefan free boundary problem}. In that regime, the motion of the interface is driven by the evolution of the density in the bulk. This bulk density eventually relaxes toward a constant (stable) value $\theta$. 
At that point, the motion of the interface stops, at least at this time scale. But such a state is only metastable and the slower evolution of the interface, at time scale $\eps^{-3}$, can be described by a one-phase Hele-Shaw free boundary problem with surface tension.
\medskip

To justify this analysis, we consider a large number  of particles, $\int \rho_{in}(x)\, dx = \eps^{-d} \gg 1$.
Rescaling the variables $x\mapsto \eps x$ and $t\mapsto \eps^2 t$ leads to the equation
\begin{equation}\label{eq:LSSeps}
\pa_t \rho + \div(\rho \na (G_\eps* \rho ) ) = \div( \rho  \na  f'( \rho)) \qquad \mbox{ in } [0,\infty)\times\R^d
\end{equation}
with $ G_\eps(x) := \eps^{-d} G(\eps^{-1} x)$ and $\int \rho_{in}(x)\, dx=1$ (we can also consider the corresponding hard-sphere model).
Recalling that $G\in L^1(\R^d)$ and denoting 
$$\int_{\R^d} G(x)\, dx = \frac{1}{\sigma}<\infty$$ 
(which is satisfied for \eqref{eq:G}), 
we rewrite \eqref{eq:LSSeps} as follows:
\begin{equation}\label{eq:CHeps}
\pa_t \rho + \div\left(\rho \na \int G_\eps(x-y) (\rho(y)-\rho(x))\, dy\right) = \div( \rho  \na  h'( \rho)) \qquad \mbox{ in } [0,\infty)\times\R^d,
\end{equation}
where 
\begin{equation}\label{eq:hh}
h(\rho):=f( \rho)-\frac{1}{2\sigma} \rho^2 + a \rho
\end{equation}
for some constant $a\in \R$ - the addition of the term $a\rho$ in the function $h$ does not affect the equation but will be important for the energy. When $f$ is given by \eqref{eq:fm} with $m>2$, 
the function $h''(\rho)=f''(\rho)-\frac{1}{\sigma}$ is negative  for small $\rho$ and positive for large $\rho$. 
This is a fundamental property which implies that the limiting equation $\pa_t \rho  = \div( \rho  \na  h'( \rho))$ is an ill-posed forward-backward diffusion equation and leads to phase separation.
In fact, the function $h$ is a {\bf double well-potential}:  
When $f$ is given by \eqref{eq:fm} with $m>2$, the function $h$  in \eqref{eq:hh} becomes
\begin{equation}\label{eq:hm}
 h_m(\rho) = \frac{\rho}{m-1} \left( \rho^{m-1} - (m-1) \theta_m^{m-2}\rho + (m-2)\theta_m^{m-1}\right), \qquad 
 \theta_m = \left(\frac{1}{2\sigma}\right)^\frac{1}{m-2}
 \end{equation}
where we took $a = \frac{m-2}{m-1} \theta_m^{m-1}$. This function is a double-well potential with a (singular) well at $\rho=0$ and a (smooth) well at $\rho=\theta_m$ (see Figure \ref{fig:1}).
\begin{figure}
 	 	 \includegraphics[width=.25\textwidth]{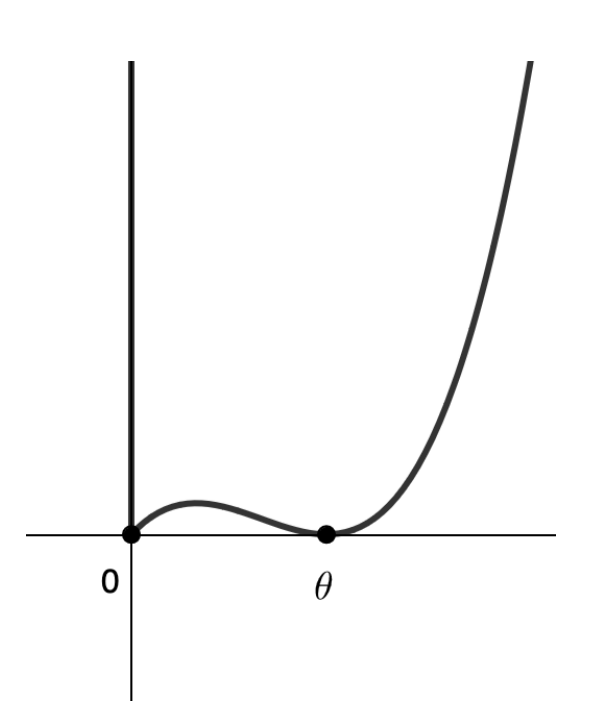}	\hspace*{30pt} \includegraphics[width=.25\textwidth]{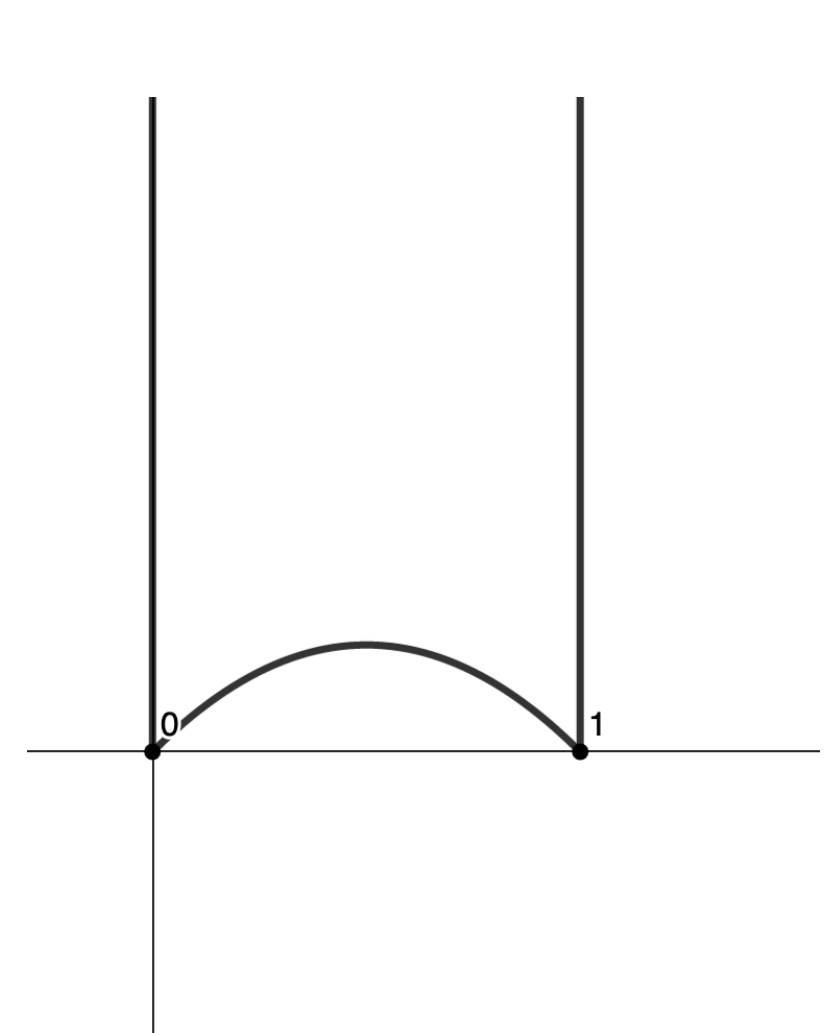}
			\vspace*{-10pt}			
 	 		\caption{The double-well potential   $h_m(\rho) $ when  $m=3$ (left) and $h_\infty(\rho)$ (right)}
 	 		\label{fig:1}
 	 	\end{figure}
When $f=f_\infty$, we take $a=\frac 1{2\sigma }$ in \eqref{eq:hh}, and find
\begin{equation}\label{eq:hinfty}
h_\infty(\rho) := 
\begin{cases}
\frac  1{2\sigma}\rho(1-\rho) & \mbox{ if } 0\leq \rho\leq 1;\\
\infty & \mbox{  otherwise}
\end{cases}
\end{equation}
which  is a double-well (or double obstacle) potential, with wells at $\rho=0$ and $\rho=1$ (see Figure \ref{fig:1}).

\medskip

We do not need to restrict ourselves to $f$ given by  \eqref{eq:fm} or \eqref{eq:finfty} for the results below. All we need is that the function $\rho\mapsto h(\rho)$ is a double-well potential with stable region $\{0\}\cup (\theta,\infty)$ for some $\theta>0$.
Under this condition,
\eqref{eq:CHeps} is a nonlocal approximation of a Cahn-Hilliard equation whose  sharp interface limit $\eps\to0$ is a classical problem. 
We will show (in terms of $\Gamma$-convergence of the energy - see Theorem \ref{thm:stefan})
 that the limit $\eps\to0$ of~\eqref{eq:LSSeps} or~\eqref{eq:CHeps} leads to a generalized
Stefan problem:
\begin{equation}\label{eq:stefan0}
\pa_t \rho = \div(\rho\na {h^{**}}'(\rho))
\end{equation}
where $h^{**}$ denotes the convex hull of $h$.
When the initial density does not take values in the unstable phase, that is if
$|\{\rho_{in}(x)\in (0,\theta)\}|=0$, then  \eqref{eq:stefan0} is a weak formulation of
\begin{equation}\label{eq:stef}
\begin{cases}
\rho \geq \theta, \quad \pa_t \rho = \div(\rho\na {h^{**}}'(\rho)) \qquad \mbox{ in } E(t) \\
\rho = 0\qquad \mbox{ in } \R^d\setminus E(t) \\
V = - \na  {h^{**}}'(\rho) \cdot n.
\end{cases}
\end{equation}
where $V$ denotes the normal velocity of $\pa E(t)$.
The evolution of $\rho$ in the set $E(t)$ (described by a nonlinear diffusion equation) is thus responsible for the motion of the interface $\pa E(t)$ and will eventually lead to constant density, that is $\rho(t)\to \theta \chi_{E}$  for some (constant) set $E$.

\medskip

If $\rho_{in}(x) = \theta\chi_{E_{in}}$, then the solution of \eqref{eq:stefan0} is constant in time which indicates that the corresponding solution of \eqref{eq:LSSeps} evolves very slowly when $\eps\ll1$ (characteristic functions are metastable for \eqref{eq:LSSeps}). In order to characterize the evolution of such solutions over larger time scale (of order $\eps^{-3}$ compared to the microscopic time scale), we  consider (in Section \ref{sec:st}) the equation
\begin{equation}\label{eq:LSSeps2}
\eps \pa_t \rho^\eps + \div(\rho^\eps \na (G_\eps* \rho^\eps ) ) = \div( \rho^\eps  \na  f'( \rho^\eps))\qquad \mbox{ in } [0,\infty)\times\R^d
\end{equation}
with initial data $\rho_{in}(x) = \theta\chi_{E_{in}}$ for some set $E_{in}$ with finite perimeter.
When $G$ is the Newtonian kernel \eqref{eq:G}, 
the evolution of $\rho^\eps$, solution of \eqref{eq:LSSeps2}, is then asymptotically described by the Hele-Shaw free boundary problem with surface tension (see Theorem \ref{thm:HS}):
\begin{equation}
\label{eq:HS}
\begin{cases}
\Delta p = 0 & \mbox{ in } E(t), \\
p = \gamma  \kappa & \mbox{ on } \pa E(t),  \\
V = -\na p \cdot \nu & \mbox{ on } \pa E(t) .
\end{cases}
\end{equation}
where $\kappa(t,x)$ denotes the mean-curvature of $\pa E(t)$ (with the convention that $\kappa\geq 0$ when $E$ is convex) and $V$ denotes the normal velocity of the interface $\pa E(t)$.
The constant $\gamma$ depends on $m$ and $\sigma$ (see \eqref{eq:sigma}).

\medskip

This shows that while the Stefan problem \eqref{eq:stefan0} describes the phase separation phenomena that takes place for the solutions of \eqref{eq:LSSeps} when $\eps\ll1$, the resulting clusters (regions when $\rho = \theta$) will continue to evolve over a much larger time scale. This evolution, due to to the attractive  interactions between the particles, is akin to the evolution of an interface separating two immiscible fluids under the effect of surface tension phenomenon.
This is not completely surprising: Surface tension phenomena can be explained by stating that the bond between two molecules of the same fluid  (cohesion) is stronger than that between two molecules of different fluids (adhesion). The same can be said about our particles, which are happier when surrounded by other particles than by vacuum.
\medskip

Another important phenomenon that often comes together with surface tension is the notion of {\bf contact angle}, which appears when the free surface come in contact with a fixed boundary. In most of the paper we will look at problems set in the whole space $\R^d$, but in Section \ref{sec:ca} we look at problems set in a bounded domain and show that   contact angle conditions are a natural byproduct of the sharp interface limit discussed above.
More precisely, we will consider the initial boundary value problem
\begin{equation}\label{eq:LSSbd}
\begin{cases}
\eps \pa_t \rho + \div(\rho \na \phi_\eps  ) = \div( \rho  \na  f'( \rho)) & \mbox{ in }[0,\infty) \times  \Omega  \\
\rho \na [ -f'( \rho) + \phi_\eps ]\cdot n  =0 & \mbox{ on }[0,\infty)\times \pa \Omega \\
\rho(x,0)=\rho_{in} & \mbox{ in } \Omega.
\end{cases}
\end{equation}
In the context of chemotaxis, $\phi_\eps $ will be the  solution of an elliptic boundary value problem in $\Omega$  (see \eqref{eq:phieps}). In that case we will derive a contact angle condition which depends on the boundary conditions for $\phi_\eps $ (see \eqref{eq:CAC}).
In some experimental settings, it can also be interesting to keep $\phi_\eps  = G_\eps*\rho$ (where $\rho$ is extended by $0$ to $\R^d$ to make sense of the convolution), modeling interactions that only depends  on the distance between the particles. In  that case, $\Omega$ acts as an obstacle and the limiting free boundary problem is \eqref{eq:HS} supplemented with contact angle condition $\alpha=\pi$ (i.e. the contact is tangential). 
But  general contact angle conditions can be recovered as well by taking into account the interactions of our particles with the fixed boundary (Section \ref{sec:CAB}).

\medskip

\noindent\underline{Longer time scale.}
As a final note, we observe  that stationary solutions for \eqref{eq:LSS} and global minimizers of the corresponding energy have been  the subject of intense research in the past two decades. We will not attempt to give a full review of these results here, but we recall that in many settings it is known that stationary solutions must be radially decreasing.
For example  when $G$ is the Newtonian attractive kernel and $f$ is given by \eqref{eq:fm} with $m>1$, the unique stationary state in two dimensions is radially symmetric and decreasing (see for instance  \cite{CHVY,DXY}).
In contrast, both \eqref{eq:stefan0} and \eqref{eq:HS} have stationary solutions that are not radially decreasing, but consist of clusters separated by vacuum. 
This suggests that when $\eps\ll1$, the solutions of \eqref{eq:LSSeps} and  \eqref{eq:LSSeps2} may first approach some metastable equilibrium, close to some steady states of the limiting free boundary problems, but eventually these clusters will coalesce and converge as $t\to\infty$ to a radially symmetric stationary state (which consists of a unique cluster). Such dynamics are indeed observed numerically (see for instance \cite{CCY19}).

\begin{figure}	\label{fig:intro}
\includegraphics[width=.9\textwidth]{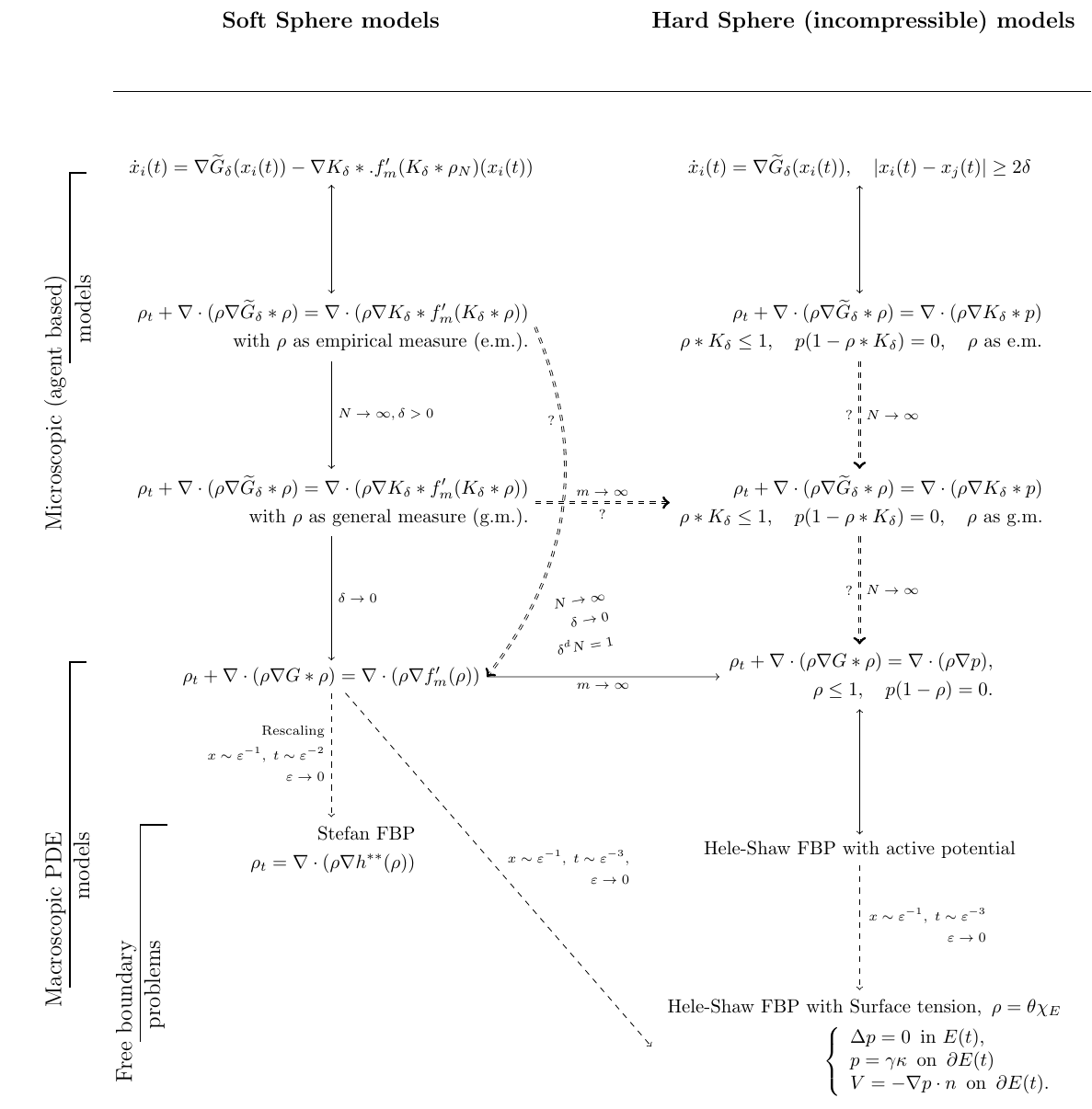}		
% 	 		\caption{The double-well potential $f(\rho) - \frac{1}{2\sigma} \rho^2$ when $f(\rho)=\rho^3$}
\caption{The various models discussed in this paper and their relationships. The dashed arrows indicate conditional convergence results. The double dashed arrows indicate open problems}
\end{figure}

\section{Energies and gradient flows.}
Formally,  all the models described in this paper have a gradient flow structure on the space of probability measures with finite second moment 
$$
\mathcal P_2(\R^d) := \left\{ \rho\in \mathcal P(\R^d)\, ;\, \int_{\R^d} |x|^2 d\rho(x) <\infty\right\}
$$
endowed with the $2$-Wasserstein metric which we denote by $d_W$.
This structure was first formalized for the Fokker-Planck equation in the seminal work of Jordan, Kinderlehrer and Otto \cite{JKO} and extended to other PDE in various work, see for instance \cite{CMV,AGS08,CDFLS11}.
Every equation presented in the introduction can  be written in the form
\begin{equation}\label{eq:cont}
\pa_t \rho + \div(\rho v ) =0,  \qquad v = -\na \frac{\delta \E}{\delta \rho} (\rho)\mbox{ in } (0,\infty)\times \R^d,
\end{equation}
for some energy functional $\E:\P_2 \to \R\cup\{+\infty\}$, which in turn provides a natural  Lyapunov functional for the evolution equation: Weak solutions  of \eqref{eq:cont}  satisfy the energy dissipation inequality
$$%\begin{equation}\label{eq:endis}
\E[\rho(t)] + \int_0^t \int_{\R^d} |v|^2 d\rho \, ds \leq \E[\rho(0)].
$$
We briefly present below the energy functionals associated to our models. Throughout the paper
we will rely on these functionals and their asymptotic behavior to develop heuristic arguments and to justify singular limits of the associated PDEs. For most of this paper, we will work with measures $\rho$ which are absolutely continuous with respect to Lebesgue measure. This is denoted by $d\rho \ll dx$ and we will use $\rho$ to denote both the measure and its density with respect to Lebesgue measure.

\medskip

The (macroscopic soft-sphere) aggregation-diffusion equation \eqref{eq:LSS} is classically associated with the energy
\begin{equation}\label{eq:Ef}
\E_f [\rho] := \F_f [\rho] - \frac{1}{2}\iint_{\R^d\times\R^d} G(x-y)\, d\rho(x) \, d\rho(y),  \quad \F_f[\rho] := 
\begin{cases}
\displaystyle	\int_{\R^d} f(\rho(x))\, dx, 	&\text{if }\rho\in\P_2(\R^d), \; d\rho \ll \, dx, 	\\[4pt]
	+\infty, 	&\text{otherwise}.
\end{cases}
\end{equation}
We use the notation $\E_m$ (resp. $\E_\infty$) and 
$\F_m$ (resp. $\F_\infty$) when $f$ is given by \eqref{eq:fm} (resp. \eqref{eq:finfty}). In particular, for $\rho \in \P_2(\R^d)$ such that $d\rho \ll \, dx$, we have
$$
\F_m[\rho] = \int_{\R^d}  \frac{\rho(x)^m}{m-1} \, dx, \qquad \F_\infty[\rho]
= \left\{
\begin{array}{cl}
	0, 	&\mbox{ if }    \rho\le 1 \text{ a.e.} 	\\
	+\infty, 	&\text{ otherwise}
\end{array}
\right.
$$
and the energy $\E_\infty$, given by
\begin{equation}\label{eq:Einfty}
\E_\infty [\rho]  =\F_\infty[\rho]- \frac{1}{2}\iint G(x-y)\, d\rho(x) \, d\rho(y)  ,
\end{equation}
is associated with the hard-sphere model \eqref{eq:LHS}.

Similarly, the microscopic model \eqref{eq:NLSS} is a gradient flow for the regularized energy
\begin{equation}\label{eq:Edelta}
\E_{\delta}[\rho]:= \F_f [\rho*K_\delta]    - \frac{1}{2}\iint_{\R^d\times\R^d} \widetilde G_\delta   (x-y)\, d\rho(x) \, d\rho(y).
\end{equation}
Notice that $\E_\delta[\rho] = \E_f[K_\delta * \rho]$. In particular, thanks to the convolution with $K_\delta$, the measure $\rho$ does not have to be absolutely continuous with respect to the Lebesgue measure for $\E_\delta$ to be finite.
These energies play an important role in our proofs, and the $\Gamma$-convergence of $\E_{\delta}$ to $\E_f$ as $\delta\to 0$ and that of $\E_{m}$ to $\E_\infty$ as $m\to\infty$ are  good indicators of the relationship between the corresponding PDEs. 
\medskip

But such $\Gamma$-convergence results play an even more central role in the second part of the paper and the derivation of the free boundary problems \eqref{eq:stefan0} and \eqref{eq:HS}.
First, we note that the rescaled equation  \eqref{eq:LSSeps} is a gradient flow for the rescaled energy
\begin{equation}\label{eq:Jeps0}
 \F_f [\rho]  - \frac{1}{2}\iint_{\R^d\times\R^d} G_\eps(x-y)\, d\rho(x) \, d\rho(y), \qquad G_\eps(x)=\eps^{-d} G(\eps^{-1}x).
\end{equation}
Up to a constant, this energy can also be written as
\begin{equation}\label{eq:Jeps}
\J_\eps[\rho]  =  
\begin{cases}
\ds \int_{\R^d} h(\rho) \, dx +\frac{1}{4} \iint_{\R^d\times \R^d }  G_\eps(x-y)[\rho(x)-\rho(y)]^2\, dx\,dy 
& \ds \text{ if }  \rho\in\P_2(\R^d), \; d\rho \ll \, dx\\
\infty & \mbox{ otherwise.}
\end{cases}
\end{equation}
where $h$ is the double well-potential defined by \eqref{eq:hh} (with wells at $\rho=0$ and $\rho=\theta$).
% $$h(\rho) := f(\rho) - \frac{1}{2\sigma}\rho^2+a\rho, \qquad  \sigma^{-1}= \int_{\R^d} G(x)\, dx.$$
This functional is a non local approximation of the Allen-Cahn energy 
\begin{equation} \label{eq:ACdf}
 \int_{\R^d}  h(\rho(x))\, dx + \eps^2 \beta \int_{\R^d} \frac 1 2 |\na \rho|^2\, dx,
\end{equation}
commonly used in problems that exhibit phase separation.
We will prove 
in particular that 
as long as $\int_{\R^d} (1+ |x|^2) G(x)\, dx <\infty$, 
$\J_\eps$ $\Gamma$-converges to (see Proposition \ref{prop:Gamma1}):
\begin{equation}\label{eq:J*}
\J^*[\rho ]=\int_{\R^d} h^{**}(\rho) \, dx,
\end{equation}
whose corresponding gradient flow is the generalized Stefan free boundary problem \eqref{eq:stefan0}.
Characteristic functions $ \theta\chi_E$ are stationary solutions of \eqref{eq:stefan0} and 
$\J_\eps(\theta\chi_E) \to \J^*(\theta\chi_E)=0$ as $\eps\to0$. But the rescaled functional $\G_\eps[\rho]:=\eps^{-1}\J_\eps[\rho]$ $\Gamma$-converges to the perimeter functional (see Theorem \ref{thm:Gamma})
$$
\G_0(\rho) = 
\begin{cases}
\gamma \theta  P(E) & \mbox{ if } \rho\in \P_2(\R^d) ,\quad  \rho = \theta \chi_E \in \BV(\R^d)\\
\infty & \mbox{otherwise}
\end{cases}
$$
for some constants $\gamma$ depending on $\sigma$ and $\theta$ where the perimeter $P(E)$ of the set $E\subset \R^d$ is defined by
\begin{equation}\label{eq:Per}
P(E) = \sup\left\{\int_E \div g\, dx\, ;\, g\in C^1_0( \R^d ; \, \R^d), \quad |g(x)|\leq 1, \; \forall x\in \R^d\right\}.
\end{equation}
Such results, for general kernels $G$ and smooth double-well potentials were first derived in~\cite{AB98}. 
This convergence is also proved by a different approach in \cite{M23} when $G$ is given by \eqref{eq:G} and $f$ is given by \eqref{eq:fm} with $m>2$ and in \cite{MW,KMW2} when $f=f_\infty$. 
In these papers, the role of boundary conditions (when the problem is set  in a bounded domain) plays a crucial role 
(see Theorem \ref{thm:4}) and led to the derivation of contact angle conditions.

The final observation is that the Wasserstein gradient flow for the perimeter functional is the Hele-Shaw free boundary problem with surface tension \eqref{eq:HS} (see \cite{OttoL}). The $\Gamma$-convergence of $\G_\eps$ to $\G_0$ is thus  consistent with the convergence of \eqref{eq:LSSeps} to \eqref{eq:HS}.
However, the $\Gamma$-convergence of the energy alone does not guarantee the convergence of the corresponding gradient flows. Additional sufficient conditions for such a convergence were established by Sandier-Serfaty \cite{serfaty} and used, for example, to justify the convergence of \eqref{eq:KellerSegel} to \eqref{eq:LHS} in the limit $m\to\infty$ in \cite{CT20}. 
These additional conditions can be difficult to establish in other frameworks and we will not review or extend such results in this paper.
For the derivation of  the free boundary problems \eqref{eq:stefan0} and \eqref{eq:HS} we will take a different approach and state conditional convergence results (see Theorems \ref{thm:stefan} and \ref{thm:HS}), which imply the convergence of the solutions under an assumption on the convergence of the energy (see \eqref{eq:EAS}). Such conditional convergence results have a long history with free boundary problems involving mean-curvature (see \cite{OttoL,LO16,EO15,JKM}), and proving rigorous convergence results without this assumption remains an important and challenging problem.

 \medskip

\medskip

\part{\Large Microscopic/macroscopic soft-sphere and hard-sphere models}

\section{Soft-sphere models: The blob method}\label{sec:blob}
The main goal of this section is to derive the microscopic model \eqref{eq:NLSS}  from particle dynamics and review some of the recent literature on this and related models.
The convolution with the kernel $K_\delta$ is the key feature of this model. It accounts for the finite size ($\delta>0$) of the particles, as opposed to treating them as points. 
We will see that  \eqref{eq:NLSS}  has solutions in the form of empirical distributions (sums of Dirac masses centered at points $x_i(t)$), a fact that allows us to connect \eqref{eq:NLSS}  to the corresponding microscopic model describing the motion of individual particles via a system of coupled ODEs.

From a mathematical perspective,  \eqref{eq:NLSS} is the gradient flow of a $\lambda$-convex energy functional (under appropriate assumptions on $K_\delta$ and $f$). This classical framework yields the well-posedness of \eqref{eq:NLSS} in the set of probability measures.
Finally, we will discuss the connection between \eqref{eq:NLSS} and the classical aggregation-diffusion equation \eqref{eq:LSS} in the limit $\delta\to0$.

\subsection{Transport equation with interaction potential}
Transport equations without any diffusion and with sufficiently smooth interaction potential 
$\widetilde G:\R^d\to \R$ (satisfying $\na \widetilde G(0)=0$) are naturally associated to deterministic particle systems via the empirical distribution:
Denoting by $\{x_i(t) \}_{i=1,\dots, N}$ the positions of $N$ particles, solutions to the following system of ODEs
\begin{equation}\label{eq:ODE1}
\dot{x}_i (t) 
 =  \frac 1 N \sum_{j=1}^{N} \na \widetilde G (x_j(t)-x_i(t)) \qquad \forall i=1,\dots, N,
\end{equation}
the corresponding empirical distribution 
$
\rho_N (t,x)  = \frac 1 N \sum_{j=1}^{N} \delta(x-x_j(t))
$ 
is the solution (in the sense of distribution) of the continuity equation
\begin{equation}\label{eq:continuity}
\pa_t \rho + \div(\rho \na \widetilde G*\rho)=0
\end{equation}
with initial data $\rho_N(0,x) = \frac 1 N \sum_{j=1}^{N} \delta(x-x_j(0))$ (see for instance \cite{CDFLS11,CCH14}).

Equation \eqref{eq:continuity} appears in many settings, particularly in the mathematical modeling of collective behaviors.
We refer to  \cite{CDFLS11} and the many references therein for a detailed discussion of the associated well-posedness theory. We recall that \eqref{eq:continuity} is a gradient flow for the energy
$$
-\frac{1}{2}\int_{\R^d} \!\!\int_{\R^d} \widetilde G(x-y) \rho(x)\rho(y)\, dx\, dy
$$
defined on $\mathcal P_2(\R^d)$ (this was formalized in \cite{CMV} following ideas that were first introduced in \cite{Otto01}).
In that context, a key assumption for the well-posedness of \eqref{eq:continuity} is the $\lambda$-convexity of the kernel $-\widetilde G$ for some $\lambda \leq 0$ (see \cite{CDFLS11}).
Under this assumption, we also get the following stability estimate 
\begin{equation}\label{eq:Dob}
d_W(\rho^1(t),\rho^2(t))\le d_W(\rho^1(0),\rho^2(0)) e^{-\lambda t}
\end{equation}
for any two weak measure solutions $\rho^1(t)$ and $\rho^2(t)$ of \eqref{eq:continuity} in $ C([0,\infty);\mathcal P(\R^d))$.
Stability estimates have a long history and a similar inequality was first proved by 
Dobrushin \cite{D79} (see also Golse \cite{G16}) with the $1$-Wasserstein distance. 
Deriving such estimates (which can lead to useful numerical approximations) when the interaction potential $\widetilde G$ is less regular and possibly in different topologies
is still an active area of research. 
Inequality \eqref{eq:Dob} implies in particular that if $\rho_N(0)$ converges to $\rho_{in}$ (with respect to $d_W$), the  measure $\rho_N(t,x)$ converges to the corresponding solution of \eqref{eq:continuity}.
We will   say that \eqref{eq:continuity} is the mean-field limit of the ODE system \eqref{eq:ODE1}.

The mean field limit (but not \eqref{eq:Dob}) can also be justified  for some more singular potentials (see for instance  \cite{CCH14} when $\widetilde G(x)\sim |x|^b$ with $b>2-d$), but not, to our knowledge, with the Newtonian kernel as in the PKS model.
Other classical references for  mean-field limit of related model include Sznitman~\cite{S91} and Golse~\cite{G16} as well as the recent reviews with both theoretical and numerical perspectives~\cite{CD22a,CD22b}. 
\medskip
  
\subsection{Linear/Nonlinear repulsion}
When diffusion (linear or nonlinear) is added to \eqref{eq:continuity}, solutions starting with empirical measures are instantly regularized by the diffusion.
This makes the approximation of the corresponding PDE by a deterministic ODE system challenging (see \cite{DM90,Russo}).
A classical way to deal with this issue is to consider instead a system of stochastic ODEs (adding a Brownian motion to \eqref{eq:ODE1}) as is done with the discretization of the linear Fokker-Planck equation by Langevin dynamics.
Some nonlinear diffusion equations  have also been handled via such methods \cite{O01,MCO05,P07,FP08}. We also refer to some recent work treating singular potentials via the so-called \textit{relative-entropy method}~\cite{Jabin2017,JW18,BJW19} and \textit{modulated energy method}~\cite{RS23} (the result of \cite{BJW19} applies to the Patlak-Keller-Segel model in dimension~2).

We will now describe a different approach, based on the so-called {\it blob method}, which is 
a deterministic approach in which the (nonlinear) diffusion is approximated by a nonlocal interaction term which preserves the empirical distribution (see \cite{CB16,CCP19,LM01,CEHT23,BE23,carrillo2023nonlocal}). 
This approach was also used  by  Motsch and Peurichard in \cite{MP18} to derive a model for tumor growth.
 
\subsection{Nonlocal nonlinear repulsion}
The {\it blob method} amounts to taking into 
 account the finite size $\delta>0$ of the particles (at the microscopic level) and some volume exclusion principle.
 A key features of the microscopic models we have in mind is a repulsive effect with finite radius: Particles that are within a certain distance of each others feel the effect of a strong repulsion, which prevents the density of particles from becoming too large.

Given a compactly supported function $\bar K_\delta:\R^d \to [0,\infty) $, we introduce the  counting function 
$$\mu_N(x) =\sum_{j=1} ^N \bar K_\delta(x-x_j(t)).$$
When $\bar K_\delta=\chi_{B_\delta(0)}$ (an important example we will come back to regularly), $\mu_N(x)$ counts how many particles are located in $B_\delta(x)$. 
If we think of the particles as {\it hard-spheres} of radius $\delta$, it is natural to impose
 the constraint $\mu_N\leq 1$. We will discuss this setting, and its many challenges, in the next section,  but for now we will consider  a {\it soft-sphere} model which instead of this hard constraint includes a repulsion force  that drives the particles away from crowded regions with a strength that depends on~$\mu_N$.

Before introducing the model, we note that we can interpret  the quantity $\int \bar K_\delta(x)\, dx=|B_\delta| = \omega_d \delta^d$ as the volume occupied by one particle, so the total volume occupied by all particles is equal to $m_0:=N \int \bar K_\delta(x)\, dx$.
Introducing the normalized kernel $K_\delta = \frac{1}{\int \bar K_\delta(x)\, dx} \bar K_\delta$, we can write 
$$ \mu_N (x) = N \bar  K_\delta *\rho_N(x) = m_0  K_\delta *\rho_N(x) .$$
In order to simplify the notations, we will take $m_0=1$ below so that the counting function $\mu_N$ can be written as 
$$ \mu_N (x) =  K_\delta *\rho_N(x) , \qquad \int K_\delta(x)\, dx =1, \quad \supp K_\delta = B_\delta.$$
Note that we  can choose $K_\delta(x)=\frac{1}{\omega_d \delta^d}K(\frac x \delta)$ with $K(x)=\chi_{B_1}(x)$, but when convenient we will replace this kernel with a smooth radially symmetric decreasing function supported in $B_1$. 
In what follows $N$ and $\delta$ will be treated as independent parameters, but we should remember that this interpretation of the microscopic model imposes the constraint $N\sim \delta^{-d}$.

\medskip

The soft-sphere model is obtained by adding a repulsive term to    \eqref{eq:ODE1} as follows:
\begin{equation}
	\label{eq:char_particle}
\dot{x}_i (t) 
 =  \frac 1 N \sum_{j=1}^{N} \na \widetilde G (x_j(t)-x_i(t)) 
 - \int  K_\delta (x_i(t)-y) \na f'(\mu_N) (y)\, dy, \qquad i=1,\dots N,
\end{equation} 
for a convex function $f$ (for instance given by \eqref{eq:fm}). Equation \eqref{eq:char_particle}
 can also be written as 
\begin{equation}\label{eq:blob}
\dot{x}_i (t)  =  \na \widetilde G *\rho_N(x_i(t)) - \na K_\delta * f'(  K_\delta *\rho_N(x)) (x_i(t)),\qquad i=1,\dots N.
\end{equation}
The particular structure of this repulsive term, with the double convolution, is convenient from a mathematical view point (it leads to an interpretation of the corresponding  PDE as a gradient flow).
From a modeling view point, it can be interpreted as follows (when $K(x)=\chi_{B_1}(x)$): 
Since a particle is represented by the ball $B_\delta(x_i(t))$, the repulsion force exerted on its center $x_i(t)$ is the  average  over $B_\delta(x_i(t))$  of the gradient of the pressure
$f'(\mu_N(x))$  - which depends on the counting function $\mu_N$. 
\medskip

System \eqref{eq:blob} is similar to the deterministic system used for example in 
\cite{CB16,CCP19,CEHT23,carrillo2023nonlocal} to approximate non-linear diffusion equation via what has been called  the blob method.
One advantage of this approach is the fact that 
given a solution $\{ x_i(t)\}_{i=1,\dots, N}$ of \eqref{eq:char_particle}, the empirical distribution  $\rho_N(t,x)$ is a solution (in the sense of distribution) of the nonlinear transport equation
\begin{equation}
	\label{eq:double_conv}
\pa_t \rho + \div(\rho \na \widetilde G*\rho ) =\div( \rho  \na K_\delta*f'( K_\delta*\rho)).
\end{equation}

However, it has been observed (see for example \cite{MP18}) that the long-time behavior of~\eqref{eq:double_conv} exhibits discrepancies with that of the microscopic dynamics~\eqref{eq:blob}:
When $\widetilde G$ is zero and $K_\delta$ is compactly supported, numerical simulations show that solutions of \eqref{eq:double_conv} with absolutely continuous densities spread and converge (locally) to zero (while preserving the mass) as $t\to \infty$ 
(which is consistent with the behavior of solutions of the porous media equation)
whereas, for a finite number of particles $N$, the solutions to~\eqref{eq:blob} converge to a finite non-zero value (in particular, the dynamics stops when all the points $x_i(t)$ are at distance greater than $2\delta$ from one another).  
So, while we have a one-to-one correspondence between the solutions of \eqref{eq:blob} and empirical measures 
solutions of \eqref{eq:double_conv}, these empirical measures are \textit{unstable} solutions of \eqref{eq:double_conv} (in the sense that perturbations of the Dirac masses diffuse in space under the dynamics of \eqref{eq:double_conv}, hence departing further from their original Dirac structure). 
This discrepancy between the microscopic (ODE) and macroscopic (PDE) dynamics is not unique to this model and we shall see this again for  the hard-sphere (or hard congestion) models in the next section
\footnote{Motsch and Peurichard \cite{MP18} proposed a stabilizing method which consists in
assuming that $f(\rho)=0$ for $\rho\leq \rho^*$. This is to ensure that no repulsion occurs when the particle density $\rho$ falls below some threshold value $\rho_*$.}.

\medskip
Note that \eqref{eq:double_conv} is reminiscent of the transport equation with Brinkman's law, which can be written as 
\begin{equation}\label{eq:brink}
\pa_t \rho = \div(\rho \na \widetilde G*\rho ) + \div( \rho  \na K_\delta*f'(\rho)), \qquad K_\delta - \delta \Delta K_\delta = \delta_{x=0}.
\end{equation}
This model has a similar behavior as  \eqref{eq:double_conv} when $f'(\rho)=\rho$, but when $f(\rho)$ is given by \eqref{eq:fm} with $m\gg1 $, \eqref{eq:brink} imposes  a height constraint on $\rho$ while \eqref{eq:double_conv} imposes a constraint on $K_\delta*\rho$. Both the dynamics and the mathematical theory are quite different in that case.

\medskip

\subsection{A nonlocal, nonlinear Keller-Segel model}
In the parabolic-elliptic approximation of the Patlak-Keller-Segel model, the attractive potential $\phi = G*\rho$ solves $\sigma \phi-\eta \Delta \phi =  \rho$ and represents the concentration of a self-emitted chemo-attractant substance.
At the microscopic scale, we should also take into account the finite size of the particles $B_\delta$ (here, we take $K=\chi_{B_1}$ again):
First, the production of the chemo-attractant occurs over the whole ball $B_\delta(x_i)$ instead of being concentrated at the center $x_i$, leading to 
\begin{equation}\label{eq:cdelta}
\sigma \phi_\delta-\eta \Delta \phi _\delta = K_\delta * \rho_N
\end{equation}
Next, the particles sensors are also located over the whole ball $B_\delta(x_i)$, rather than only in the center $x_i$, so that 
the  velocity is given by  $ \na K_\delta *\phi_\delta (x_i(t))$:
\begin{equation}\label{eq:PKSmicro}
\dot{x}_i (t) 
 =   \na K_\delta *\phi_\delta(x_i(t)) - \na K_\delta * f'(  K_\delta *\rho_N(x)) (x_i(t)) 
\end{equation}
This system is exactly  \eqref{eq:char_particle} if we replace $\widetilde G$ with $\widetilde G_\delta   :=K_\delta * \widetilde G* K_\delta$.
The good news is that while $ G$ is too singular to apply the existing well-posedness theory, the regularized kernel $\widetilde G_\delta   $ is much nicer. In particular\footnote{Indeed, the function $u=  G* K_\delta$ solves $\sigma u -\eta\Delta u = K_\delta\in L^2$ and is thus in   $H^2$. We deduce $ \|D^2 \widetilde G_\delta    \|_{L^\infty} = \| K_\delta*D^2u \|_{L^\infty} \leq \|K_\delta\|_{L^2} \|D^2u\|_{L^2} \leq C  \|K_\delta\|_{L^2}^2$.},  we have $\widetilde G_\delta     \in W^{2,\infty}$ as soon as $K_\delta \in L^2$.
\medskip

Combining the results mentioned above (for instance \cite{CDFLS11} and  \cite{carrillo2023nonlocal}),
we deduce that when $\{x_i(t)\}_{i=1,\dots, N}$ solves
\eqref{eq:cdelta}-\eqref{eq:PKSmicro}, the empirical distribution $\rho_N(t,x)$ solves \eqref{eq:NLSS} which is the gradient flow in $\mathcal P_2(\R^d)$ for the energy functional \eqref{eq:Edelta}.
This gradient flow is studied in  \cite{CDFLS11} when $f=0$ (in particular $\widetilde G_\delta   $ satisfies the assumptions of  \cite{CDFLS11}  when $K_\delta\in L^2$) and in  \cite{carrillo2023nonlocal} when $\widetilde G_\delta   =0$ and $f$ satisfies some assumptions satisfied in particular when $f(\rho) = \frac{1}{m-1}\rho^m$ (but with additional assumption on $K_\delta$).
We also refer to earlier work in a similar spirit by Carrillo, Craig, and Patacchini~\cite{CCP19}. The special case $m=2$ has been given detailed attention by Burger and Esposito~\cite{BE23} and Craig,  Elamvazhuthi,  Haberland and  Turanova~\cite{CEHT23} (with applications in cross-diffusion and sampling, respectively). Adapting the method developed in these papers (in particular \cite{CDFLS11,CCP19,CEHT23}), we get:
\begin{theorem}\label{thm:stab}
Fix $\delta>0$, assume that $K\in C^2(\R^d)$, $K\in W^{2,\infty}(\R^d)$, and take $f=f_m $ for some $m>1$.
For all $\rho_{in} \in \mathcal P_2(\R^d)$ with $\E_\delta[\rho_{in}]<\infty$, there exists a unique gradient flow solution $\rho^\delta(t)$  in $AC_{loc}([0,\infty);\mathcal P_2(\R^d))$ of \eqref{eq:NLSS}.
This solution is characterized by:
\begin{equation}\label{eq:PDE}
\begin{cases}
\pa_t \rho^\delta + \div(\rho^\delta \na \widetilde G_\delta   *\rho^\delta ) = \div( \rho^\delta  \na K_\delta*f_m'( K_\delta*\rho^\delta)) \quad \mbox{ in } \mathcal D'( (0,\infty)\times\R^d)\\
\lim_{t\to 0^+} d_W(\rho^\delta(t), \rho_{in}) = 0.
\end{cases}
\end{equation}
Furthermore, given 
 two such solutions $\rho^1(t)$ and $\rho^2(t)$, the following stability estimate holds
\begin{equation}\label{eq:Stability2}
d_W(\rho^1(t),\rho^2(t))\le d_W(\rho^1(0),\rho^2(0)) e^{-\lambda_0 t}
\end{equation}
with  
\begin{equation}\label{eq:lambda0}
 \lambda_0:= - C(\| K\|_{W^{2,\infty}(\R^d)}) \delta^{-d(m-1)-2} + C(\|K\|_{L^2}) \delta^{-d}.
 \end{equation}
\end{theorem}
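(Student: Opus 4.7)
The plan is to view \eqref{eq:NLSS} as the 2-Wasserstein gradient flow of the energy $\mathscr{E}_\delta$ defined in \eqref{eq:Edelta} and apply the abstract theory of Ambrosio-Gigli-Savaré (as deployed for closely related functionals in \cite{CDFLS11, CCP19, CEHT23}). A direct computation of the first variation gives
\[
\frac{\delta \mathscr{E}_\delta}{\delta\rho}[\rho] \;=\; K_\delta * f_m'(K_\delta * \rho) \;-\; \widetilde G_\delta *\rho,
\]
so that the continuity equation $\pa_t \rho + \mathrm{div}(\rho v)=0$ with $v=-\na \delta\mathscr{E}_\delta/\delta\rho$ is exactly \eqref{eq:NLSS}. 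Thus a curve of maximal slope (equivalently, a gradient flow in the sense of the Evolution Variational Inequality) for $\mathscr{E}_\delta$ will automatically be a distributional solution of the PDE; it is therefore enough to build such a flow.

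The heart of the argument is to verify that $\mathscr{E}_\delta$ is proper, lower semicontinuous, and $\lambda_0$-convex along generalized geodesics in $\mathcal{P}_2(\R^d)$ with $\lambda_0$ as in \eqref{eq:lambda0}. I would split $\mathscr{E}_\delta = \mathscr{I}_\delta + \mathscr{R}_\delta$ with $\mathscr{I}_\delta[\rho]:=-\tfrac{1}{2}\iint \widetilde G_\delta(x-y)\,d\rho(x)d\rho(y)$ and $\mathscr{R}_\delta[\rho]:=\mathscr{F}_{f_m}[K_\delta *\rho]$, and treat them separately. For $\mathscr{I}_\delta$, the bound $\|D^2 \widetilde G_\delta\|_{L^\infty}\le C\|K_\delta\|_{L^2}^{2}= C(K)\delta^{-d}$ given in the footnote, combined with the classical displacement-convexity criterion for interaction energies \cite{CDFLS11}, yields the contribution $-C(\|K\|_{L^2})\delta^{-d}$. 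For $\mathscr{R}_\delta$, I would follow the geodesic-convexity computation of \cite{CCP19, CEHT23}: along a generalized geodesic $\rho_t$ transported by an optimal map, one expresses $\frac{d^{2}}{dt^{2}}\mathscr{R}_\delta[\rho_t]$ as the sum of the manifestly non-negative McCann term (coming from displacement convexity of $\rho\mapsto\int f_m(\rho)\,dx$) and a remainder involving $f_m'(K_\delta*\rho_t)$ paired with $D^2 K_\delta$ transported along the geodesic. Using the pointwise bound $\|K_\delta*\rho_t\|_\infty \le \|K_\delta\|_\infty \le C(K)\delta^{-d}$ (valid for any probability measure, uniformly in $t$), together with $\|D^2 K_\delta\|_\infty \le C(K)\delta^{-d-2}$, this remainder is controlled by $C(\|K\|_{W^{2,\infty}})\,\delta^{-d(m-1)-2}$, which is the first term of \eqref{eq:lambda0}.

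Once geodesic $\lambda_0$-convexity is in hand, the abstract theory of \cite{AGS08} (see for example Theorems 4.0.4 and 11.2.1 there) produces a unique locally absolutely continuous curve $\rho^\delta\in AC_{\rm loc}([0,\infty);\mathcal{P}_2(\R^d))$ starting from $\rho_{in}$ that is a gradient flow for $\mathscr{E}_\delta$, hence by the first paragraph a distributional solution of \eqref{eq:NLSS} satisfying $d_W(\rho^\delta(t),\rho_{in})\to 0$; uniqueness in this class is built into the EVI formulation. The stability estimate \eqref{eq:Stability2} is then the standard consequence of $\lambda_0$-convexity: the Evolution Variational Inequality applied to two solutions along generalized geodesics gives $\tfrac12\tfrac{d}{dt}d_W^{2}(\rho^1,\rho^2)\le -\lambda_0\, d_W^{2}(\rho^1,\rho^2)$, and Gr\"onwall concludes. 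The main technical obstacle I anticipate is the geodesic convexity of $\mathscr{R}_\delta$ for \emph{general} $m>1$: the cases $m=2$ and a few related convex $f$'s are handled explicitly in \cite{BE23, CEHT23, CCP19}, but for a general power one must carefully manage $f_m''(K_\delta*\rho_t)=m(K_\delta*\rho_t)^{m-2}$ along the entire geodesic, not just at its endpoints. It is precisely this $L^\infty$ propagation --- fortunately automatic here thanks to the smoothing by $K_\delta$ and to mass conservation --- that forces the degradation of $\lambda_0$ as $\delta\to 0$ and produces the exponent $-d(m-1)-2$ in \eqref{eq:lambda0}.
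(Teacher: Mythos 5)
Your proposal follows essentially the same route as the paper's proof: both view \eqref{eq:NLSS} as the Wasserstein gradient flow of $\E_\delta$, establish that $\E_\delta$ is proper, lower semicontinuous and $\lambda_0$-convex along generalized geodesics by treating the interaction part (via $\|D^2\widetilde G_\delta\|_{L^\infty}\le C\|K_\delta\|_{L^2}^2$) and the regularized internal energy (via the $L^\infty$ bounds on $K_\delta$ and $D^2K_\delta$, as in \cite{CDFLS11,CCP19}) separately, and then invoke \cite[Theorem 11.2.1]{AGS08} for existence, uniqueness and the EVI contraction giving \eqref{eq:Stability2}. The only step you leave implicit is the converse characterization — that any distributional solution of \eqref{eq:PDE} is automatically in $AC_{loc}([0,\infty);\mathcal P_2(\R^d))$ and is a gradient flow of $\E_\delta$ (the paper gets this from the uniform boundedness of the velocity field and \cite[Proposition 3.12]{CEHT23}), which is what allows the contraction estimate to be applied to arbitrary solutions of the PDE rather than only to the EVI flows you construct.
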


Inequality \eqref{eq:Stability2} implies the mean-field limit for fixed $\delta$: if the empirical measure converges to a density $\rho_{in}$, in  the sense that
$d_W(\rho_{N} (t=0), \rho_{in} )\to0$, then the empirical distribution $\rho_N(t,x)$ converges weakly to the solution $\rho$ of~\eqref{eq:PDE}. The constant $\lambda_0$ given by \eqref{eq:lambda0} is a (lower bound for the) modulus of convexity for the energy functional $\mathscr{E}_\delta$ (defined by \eqref{eq:Edelta}) with respect to the underlying optimal transport geodesic structure on $\mathcal{P}_2$.
Importantly, we note that when $\delta\to0$,  we have $\lambda_0 \to -\infty$ as $\delta\to 0$. On the other hand, it is known that the limiting energy $\mathscr{E}$ is convex. This discrepancy suggests that this \eqref{eq:lambda0} is far from optimal.

\begin{proof}[Sketch of the proof of Theorem \ref{thm:stab}]
Our assumptions on $K_\delta$ and the properties of $\widetilde G_\delta   $ ensures that $\E_{\delta}:\mathcal P_2(\R^d)\to (-\infty,+\infty]$ is 
proper, lower-semicontinuous and
$\lambda$-convex with modulus of convexity given by \eqref{eq:lambda0} (see  \cite{CDFLS11,CCP19}).
The differentiability of $\E_{\delta}$ can be proved using the same arguments as in \cite{CDFLS11} (for the $\widetilde G_\delta   $ term) and \cite{CCP19} (for the $f_m$-term) and leads to
$$
\frac{\delta \E_{\delta}}{\delta\rho}[\rho] = - \widetilde G_\delta   *\rho + f_m'(K_\delta * \rho)*K_\delta.
$$
Existence and uniqueness of the gradient flow $\rho^\delta$, as well as the fact that the gradient flow is a curve of maximal slope, follows from \cite[Theorem 11.2.1]{AGS08}.
Furthermore, $\rho^\delta$ solves the continuity equation
$$\pa_t \rho^\delta + \div (\rho^\delta v^\delta) = 0 , \quad \mbox{ in  } \mathcal D'(\R^d\times(0,\infty))
$$
with velocity
$$ v^\delta(t) = \na \widetilde G_\delta   *\rho^\delta(t)- \na K_\delta * f_m'(K_\delta*\rho^\delta(t)) \qquad \mbox{a.e. } t>0$$
and satisfies the energy inequality
\begin{equation}\label{eq:energydeltaine}
 {{\E_\delta}}[\rho^\delta(t)] + \int_0^t\int_{\R^d}  |v^\delta|^2 d\rho^\delta(s)\, ds \leq  {\E_\delta}[\rho_{\text{in}}].
\end{equation}
We note that the presence of the convolution  and the assumption on $K_\delta$  implies that $v^\delta$  is uniformly bounded (and $C^1$), so $\rho^\delta$ solves \eqref{eq:PDE}. Conversely, this also implies  (see  \cite[Proposition 3.12]{CEHT23}) that any solution of \eqref{eq:PDE} is in fact in $AC_{loc}([0,\infty);\mathcal P_2(\R^d))$ and is a gradient flow of $\E_\delta$.

Finally we get   \eqref{eq:Stability2} as a consequence of the usual stability estimate for $\lambda$-gradient flows  \cite[Theorem 11.2.1]{AGS08} (see also \cite[Theorem 1.4]{CEHT23}).
\end{proof}
For future references, we also recall that $\rho^\delta$ satisfy the propagation of the second moment:
\begin{equation}\label{eq:second}
\int_{\R^d} |x|^2 \rho^\delta(t,x)\,dx \leq \left(\int_{\R^d}|x|^2 \rho_{\text{in}}(x)\,dx + \mathcal{E_\delta}[\rho_{\text{in}}]\right) e^t.
\end{equation}
which follows from   \eqref{eq:energydeltaine} since we have
$$
\frac{d}{dt} \int_\Omega |x|^2 \rho^\delta(t,x)\, dx = 2 \int_\Omega x \cdot v^\delta \rho^\delta\, dx 
\leq 2\left(\int _\Omega |x|^2 \rho^\delta\, dx\right)^{1/2} \left( \int _\Omega |v^\delta|^2 \rho^\delta\, dx\right)^{1/2} .
$$

\medskip

\subsection{Convergence to the macroscopic model: The limit $\delta\to 0$}
We now turn to  the question of the convergence of the nonlocal model \eqref{eq:NLSS} to the classical aggregation-diffusion equation \eqref{eq:LSS} when $\delta\to0$.
 We note that such a convergence cannot hold unless \eqref{eq:LSS} has a solution, and it is well-known that aggregation diffusion equations can experience finite time blow-up. 
When $f$ is given by \eqref{eq:fm} and $G$ is the chemotaxis potential \eqref{eq:G}, then global in time (bounded) solutions exist in the subcritical regime $m>m_c:=2-\frac 2 d$ (more generally, if $G(x)\sim \frac{1}{|x|^k}$, then the critical power is given by $m_c:=1+\frac kd$, see \cite{CCY19}).
In fact, we have:
\begin{theorem}\label{thm:existenceLSS}
Let $f=\frac{1}{m-1}\rho^m$ with $m>2-\frac 2 d$ and $G$ be given by \eqref{eq:G}. Given $\rho_{in}\in \mathcal P_2(\R^d)$ such that $\| \rho_{in}\|_{L^\infty(\R^d)}<\infty$ and $\supp \rho_{in} \in B_R$,
equation \eqref{eq:LSS} has a unique global solution $\rho(t,x)$ with initial data $\rho_{in}(x)$.

Furthermore,  for all $T>0$ there exist $C(T)$ and $R(T)$ such that
$$ \| \rho\|_{L^\infty((0,T)\times\R^d)}\leq C(T), \qquad \supp \rho(t) \subset B_{R(T)} \quad \forall t\in[0,T].
$$ 
\end{theorem}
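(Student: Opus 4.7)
The plan is to build a solution by regularization, extract a priori $L^\infty$ and support bounds from the subcritical assumption $m>2-2/d$, then pass to the limit and prove uniqueness by a Wasserstein/$H^{-1}$ argument.

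First I would construct approximate solutions $\rho^\varepsilon$ by replacing $G$ with a smooth bounded mollification $G^\varepsilon = G*\eta_\varepsilon$ (and, if needed, adding a vanishing linear diffusion $\kappa_\varepsilon \Delta\rho$). The resulting equation is a standard porous-medium equation with smooth, bounded, time-dependent drift $-\na \phi^\varepsilon$ with $\phi^\varepsilon = G^\varepsilon*\rho^\varepsilon$, for which classical parabolic theory provides continuous weak solutions preserving mass, non-negativity, and a finite second moment (the latter via the same computation as \eqref{eq:second}, using that $\na \phi^\varepsilon$ is uniformly bounded at the approximate level).

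The central step is the $L^\infty$ bound, uniform in $\varepsilon$. Since $\phi\ge 0$ by the maximum principle applied to \eqref{eq:G}, testing the equation against $\rho^{p-1}$ and integrating by parts gives, after using $-\eta\Delta\phi=\rho-\sigma\phi$,
\begin{equation*}
\frac{1}{p}\frac{d}{dt}\int \rho^p + \frac{4m(p-1)}{(p+m-1)^2}\int |\na \rho^{(p+m-1)/2}|^2 \;\leq\; \frac{p-1}{p\eta}\int \rho^{p+1}.
\end{equation*}
The right-hand side is controlled by Gagliardo--Nirenberg--Sobolev interpolation of $\|\rho^{(p+m-1)/2}\|_{L^{2(p+1)/(p+m-1)}}$ between $L^{2p/(p+m-1)}$ and $H^1$. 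The interpolation exponent is subcritical with respect to the dissipation term precisely because $m>2-2/d$, so for each $p$ large enough one obtains a closed differential inequality $\tfrac{d}{dt}\|\rho\|_{L^p}^p \leq C_p(1+\|\rho\|_{L^p}^{p\alpha})$ with $\alpha<\infty$, which yields local-in-time $L^p$ bounds. An Alikakos--Moser iteration (sending $p\to\infty$ while tracking the dependence of the constants) upgrades this to the bound $\|\rho^\varepsilon\|_{L^\infty([0,T]\times\R^d)}\le C(T)$. This is the step I expect to be the main obstacle: the iteration has to be performed uniformly in $\varepsilon$, and one needs to check that the subcritical exponent really does close the loop for all $p$ (using, e.g., the Blanchet--Carrillo--Laurençot scheme).

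Given the uniform $L^\infty$ bound, one immediately obtains $\|\na\phi^\varepsilon\|_{L^\infty} \leq C(T)$ from standard elliptic regularity for \eqref{eq:G}. For the support, I would compare $\rho^\varepsilon$ with a radially symmetric Barenblatt-type super-solution of a pure porous medium equation with an added bounded drift; since the center of mass moves with velocity at most $\|\na\phi^\varepsilon\|_{L^\infty}$ and the free boundary of the porous medium supersolution spreads at a controlled finite rate, one gets $\supp \rho^\varepsilon(t)\subset B_{R(T)}$ uniformly. Passing $\varepsilon\to 0$ then uses: (i) weak-$*$ compactness in $L^\infty$, (ii) strong compactness of $\rho^\varepsilon$ in $L^p_{loc}$ via the uniform bound on $\|\na (\rho^\varepsilon)^{(p+m-1)/2}\|_{L^2}$ together with the Aubin--Lions lemma applied to $\partial_t \rho^\varepsilon$ in a negative Sobolev space, and (iii) elliptic regularity to pass to the limit in $\phi^\varepsilon$ and hence in the nonlinear flux.

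Finally, uniqueness follows from an $H^{-1}$ contraction argument: if $\rho_1,\rho_2$ are two bounded solutions with the same initial data, set $w=\rho_1-\rho_2$ and $\Phi=G*w$, test the equation by $\psi=(-\Delta)^{-1}w$ (or directly use the dual Sobolev norm adapted to \eqref{eq:G}), and write
\begin{equation*}
\frac{1}{2}\frac{d}{dt}\|w\|_{H^{-1}}^2 + \int (\rho_1^m-\rho_2^m)(\rho_1-\rho_2)\,dx \;=\; \int \rho_1\na\Phi\cdot\na\psi\,dx + \int w\,\na\phi_2\cdot\na\psi\,dx.
\end{equation*}
The diffusion term on the left is non-negative by monotonicity of $\rho\mapsto\rho^m$; the right-hand side is controlled by $C(\|\rho_1\|_\infty,\|\na\phi_2\|_\infty)\|w\|_{H^{-1}}^2$ using $\|\na\Phi\|_{L^2}\le C\|w\|_{H^{-1}}$ from \eqref{eq:G}. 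Grönwall's lemma then yields $w\equiv 0$.
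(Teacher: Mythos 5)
The paper does not supply its own proof of this theorem: it cites the existing literature (Sugiyama, Kowalczyk, Calvez--Carrillo, Bedrossian--Rodr\'iguez--Bertozzi) for the global bounded solution, the HLP appendix for $L^\infty$ and compact support propagation, and \cite{BRB} for uniqueness. Your blueprint matches that standard approach: regularization, the $L^p$ differential inequality exploiting $-\Delta\phi=\eta^{-1}(\rho-\sigma\phi)$ and $\phi\ge 0$, Gagliardo--Nirenberg interpolation closing precisely under $m>2-2/d$ (since $\alpha q=2d/(2p+d(m-1))<2$ once $p>d(2-m)/2<1$), Moser iteration, Barenblatt-type comparison for finite speed of propagation, and Aubin--Lions compactness. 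These parts are sound.

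The one place where your argument, as written, does not close is the uniqueness estimate. You claim that the term
\begin{equation*}
\int w\,\na\phi_2\cdot\na\psi\,dx, \qquad \psi=(-\Delta)^{-1}w,
\end{equation*}
is controlled by $C(\|\na\phi_2\|_\infty)\,\|w\|_{H^{-1}}^2$. A direct Cauchy--Schwarz gives only $\|w\|_{L^2}\|\na\phi_2\|_\infty\|\na\psi\|_{L^2}$, which is not a closed Gr\"onwall loop. The correct route is to substitute $w=-\Delta\psi$ and integrate by parts, which yields
\begin{equation*}
\int w\,\na\phi_2\cdot\na\psi\,dx \;=\; \int \na\psi\cdot(D^2\phi_2)\,\na\psi\,dx \;-\;\tfrac12\int(\Delta\phi_2)\,|\na\psi|^2\,dx.
\end{equation*}
The second term is fine because $\Delta\phi_2=\eta^{-1}(\sigma\phi_2-\rho_2)\in L^\infty$, but the first term requires $\|D^2\phi_2\|_{L^\infty}$, and $\rho_2\in L^\infty$ only gives $D^2\phi_2$ in BMO, not $L^\infty$ (the Calder\'on--Zygmund endpoint fails). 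This is exactly the subtle point that \cite{BRB} addresses with a Yudovich-type (log-Lipschitz / Osgood) refinement of the $\dot H^{-1}$ estimate, interpolating the BMO norm of $D^2\phi_2$ against the $L^p$ control on $w$ to obtain a superlinear but integrable modulus in the Gr\"onwall step. As written, your proposal asserts a closed linear Gr\"onwall inequality that does not actually hold; you need to insert that additional Yudovich loop (or invoke an alternative uniqueness mechanism, e.g.\ the duality argument of \cite{PQV} adapted in \cite{KMW,HLP}).
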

We refer for example to \cite{Sugiyama,K05,CC06,BRB} for the existence of a global bounded solution. The propagation of the $L^\infty$ norm and compact support can also be proved as in \cite[Appendix A and B]{HLP} and uniqueness is proved in \cite{BRB}.
\medskip

While this result only requires $m>2-\frac 2 d$, we will now make the  stronger assumption  $m>2$ to simplify the analysis (the assumption $m>2$ will be crucial in the second part of this paper but the result below should  hold for all $m>2-\frac 2 d$). 
Indeed, since $\int_{\R^d} G(x)\, dx = \frac 1 \sigma$, we can  rewrite the energy \eqref{eq:Ef} as
\begin{align*}
\E_m[\rho] 
& =  \int_{\R^d} f_m(\rho) \, dx - \frac{1}{2}\iint_{\R^d\times\R^d} G(x-y)\, \rho(x) \, \rho(y) \, dx\, dy\\
& =  \int_{\R^d} f_m(\rho)  -\frac 1 {2\sigma} \rho^2  \, dx + \frac{1}{2}\iint_{\R^d\times\R^d} G(x-y)\, \left[\rho (x)^2-  \rho(x) \, \rho(y) \right] dx\, dy\\
& =  \int_{\R^d} f_m(\rho) -\frac 1 {2\sigma} \rho^2 \, dx + \frac{1}{4}\iint G(x-y)[\rho(x)-\rho(y)]^2 \, dx\,  dy.
\end{align*}
When $m>2$, the function $\rho\mapsto f_m(\rho) -\frac 1 {2\sigma} \rho^2  $ is bounded below. Since the equation preserves $\int \rho(t, x)\, dx$ over time, we can add a term $a\rho$ to make it non-negative  (see \eqref{eq:hh}).
This leads to the function $\rho\mapsto h_m(\rho)$ defined by \eqref{eq:hm},
which satisfies $h_m(\rho)\geq 0$ for all $\rho\geq 0$.
We can then work with the energy functional
$$
\overline{\E_m }[\rho] 
:=
\begin{cases}
\displaystyle \int  h_m(\rho(x))\, dx + \frac{1}{4}\iint G(x-y)[\rho(x)-\rho(y)]^2 \, dx\,  dy & \text{ if }\rho\in\P_2(\R^d), \;d\rho(x) \ll \, dx  \\
\infty & \mbox{ otherwise }
\end{cases}
$$
which differs from $\E_m $ by a constant. 
Similarly, we can rewrite the energy \eqref{eq:Edelta}, up to a constant, as
$$
\overline{\E_{\delta}}[\rho] 
:= 
\int   h_m(K_\delta*\rho(x))\, dx + \frac{1}{4}\iint G(x-y)[K_\delta*\rho(x)-K_\delta*\rho(y)]^2 \, dx\,  dy , \qquad \rho \in \mathcal P_2(\R^d).
$$
In particular, the energy inequality \eqref{eq:energydeltaine} holds with $\E_\delta$ replaced by $\overline{\E_{\delta}}$.

We can now prove:
\begin{theorem}\label{thm:convdelta}
Assume that $G$ is given by \eqref{eq:G} and that $f(\rho) = \frac{1}{m-1}\rho^m$   with $m>2$ 
and let $\rho_{in}$ be as in Theorem \ref{thm:existenceLSS}.
Let $\rho^\delta(t,x)$  be  the solution of \eqref{eq:NLSS}  given by Theorem \ref{thm:stab}
and  $\rho(t,x) $ be the solution of \eqref{eq:LSS} given by Theorem \ref{thm:existenceLSS}.
Then, the entire sequence $\rho^\delta(t)$ converges narrowly to $\rho(t)$ (uniformly in $[0,T]$).
\end{theorem}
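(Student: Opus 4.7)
The plan is to combine uniform energy/moment estimates with a compactness argument to extract a limit of $\rho^\delta$ as $\delta\to0$, pass to the limit in the weak formulation of \eqref{eq:PDE}, and then invoke the uniqueness statement of Theorem~\ref{thm:existenceLSS} to identify the limit with $\rho$ (which upgrades convergence of subsequences to convergence of the entire sequence).

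First, I would collect the uniform bounds. Since $\rho_{in}\in L^\infty(\R^d)$ has compact support and $K_\delta$ is a probability kernel, Jensen's inequality gives $\int h_m(K_\delta*\rho_{in})\,dx \leq \int h_m(\rho_{in})\,dx<\infty$; together with $\|G\|_{L^1}=\sigma^{-1}$ one checks that $\overline{\E_\delta}[\rho_{in}]$ is bounded uniformly in $\delta$. The energy inequality \eqref{eq:energydeltaine} (for $\overline{\E_\delta}$) then yields, uniformly in $\delta$ and $t\in[0,T]$:
\begin{equation*}
\int_{\R^d} h_m(K_\delta*\rho^\delta(t))\,dx + \tfrac{1}{4}\iint G(x-y)[K_\delta*\rho^\delta(t,x)-K_\delta*\rho^\delta(t,y)]^2\,dx\,dy + \int_0^t\!\!\int |v^\delta|^2\,d\rho^\delta\,ds \leq C.
\end{equation*}
Since $h_m(\rho)\sim \rho^m/(m-1)$ for large $\rho$, this controls $\|K_\delta*\rho^\delta\|_{L^\infty_t L^m_x}$, and from \eqref{eq:second} one obtains a uniform second-moment bound, hence tightness of $\{\rho^\delta(t)\}$ uniformly in $t\in[0,T]$.

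Next, I would establish the compactness. The velocity bound $\int_0^T\!\int |v^\delta|^2\,d\rho^\delta\,ds\leq C$ combined with the continuity equation gives uniform equicontinuity in $t$ with respect to $d_W$, so by Arzelà-Ascoli a subsequence $\rho^{\delta_k}(t)$ converges narrowly to some $\bar\rho(t)$, uniformly on $[0,T]$. Since $K_\delta$ is a standard mollifier, $K_\delta*\rho^\delta - \rho^\delta\to 0$ narrowly, and the same limit $\bar\rho(t)$ describes $K_{\delta_k}*\rho^{\delta_k}$. The key additional step is the \emph{strong} convergence $K_{\delta_k}*\rho^{\delta_k}\to \bar\rho$ in $L^m_{loc}([0,T]\times\R^d)$, which is required to pass to the limit in the nonlinear pressure term $K_\delta*f'_m(K_\delta*\rho^\delta)$. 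For this I would use the nonlocal Dirichlet-type seminorm controlled by the energy: writing $u^\delta:=K_\delta*\rho^\delta$, the estimate $\iint G(x-y)[u^\delta(x)-u^\delta(y)]^2\,dx\,dy\leq C$ together with $\hat G(\xi)=(\sigma+\eta|\xi|^2)^{-1}$ implies uniform control of $\int \frac{\eta|\xi|^2}{\sigma+\eta|\xi|^2}|\widehat{u^\delta}(\xi)|^2\,d\xi$, which is a fractional-Sobolev quantity that, combined with the $L^m$ bound and a uniform spatial support estimate propagated from the compact support of $\rho_{in}$ (via the bounded velocity argument), gives spatial compactness; temporal compactness in measure follows from the velocity bound as above. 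A standard Aubin-Lions/Riesz-Fréchet-Kolmogorov argument then upgrades narrow convergence of $u^{\delta_k}$ to strong $L^m_{loc}$ convergence.

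Finally, with the strong convergence in hand, I would pass to the limit term by term in the weak formulation of \eqref{eq:PDE} tested against $\varphi\in C^\infty_c([0,T)\times\R^d)$. The drift term becomes, after transferring one $K_\delta$ across the convolution, $\iint (K_\delta*\rho^\delta)\,\nabla G * (K_\delta*\rho^\delta)\cdot \nabla\varphi\,dx\,dt \to \iint \bar\rho\,\nabla G*\bar\rho\cdot\nabla\varphi$, using the uniform $L^m$ control and strong $L^m_{loc}$ convergence of $K_\delta*\rho^\delta$; the diffusive term $\iint (K_\delta*\rho^\delta)\,\nabla f'_m(K_\delta*\rho^\delta)\cdot (K_\delta *\nabla\varphi)$ similarly converges to $\iint \bar\rho\,\nabla f'_m(\bar\rho)\cdot\nabla\varphi$ (after rewriting it as the Laplacian of $\bar\rho^m$ acting on $\varphi$). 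Thus $\bar\rho$ is a weak solution of \eqref{eq:LSS} with initial datum $\rho_{in}$, and since $\rho_{in}\in L^\infty$ has compact support, uniqueness in Theorem~\ref{thm:existenceLSS} identifies $\bar\rho=\rho$; as the limit is unique, the full family $\rho^\delta$ converges narrowly to $\rho$ uniformly on $[0,T]$.

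The main obstacle is the strong compactness of $K_\delta*\rho^\delta$ in $L^m_{loc}$: the energy provides only a weak (fractional-Sobolev, nondegenerate only for high frequencies) space regularity via the interaction term, and it must be combined with the $L^m$ bound, tightness, and the mollification effect of $K_\delta$ to produce enough compactness to handle the nonlinear pressure $f'_m(K_\delta*\rho^\delta)$. Without this, passing to the limit in the $m$th-power drift is out of reach.
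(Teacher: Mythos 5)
There is a genuine gap at the step you yourself single out as the main obstacle: the strong compactness of $K_\delta*\rho^\delta$ needed for the nonlinear pressure. You propose to extract spatial regularity from the interaction part of the energy, $\iint G(x-y)[u^\delta(x)-u^\delta(y)]^2\,dx\,dy\leq C$ with $u^\delta=K_\delta*\rho^\delta$, by noting that in Fourier variables this controls $\int \frac{\eta|\xi|^2}{\sigma(\sigma+\eta|\xi|^2)}|\widehat{u^\delta}(\xi)|^2\,d\xi$. But the multiplier $\frac{\eta|\xi|^2}{\sigma+\eta|\xi|^2}$ is \emph{bounded} (it tends to $1$ as $|\xi|\to\infty$), so this quantity is dominated by $\sigma^{-1}\|u^\delta\|_{L^2}^2$ and carries no derivative gain whatsoever; it is not a fractional Sobolev seminorm and yields no spatial compactness. (The situation is different for the \emph{rescaled} kernels $G_\eps$ weighted by $\eps^{-1}$ appearing in the second part of the paper, which is perhaps the source of the confusion.) The paper obtains the missing regularity from a completely different source: an entropy dissipation estimate. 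Differentiating the second Lyapunov functional $\int\rho^\delta\log\rho^\delta$ along the flow produces the dissipation term $\frac{2}{m}\int_0^t\int|\na(K_\delta*\rho^\delta)^{m/2}|^2$, and the remaining interaction terms are controlled using the elliptic equation for $\phi^\delta=G*\rho^\delta$ together with the $L^2$ bound on $K_\delta*\rho^\delta$ (where $m>2$ enters). This gives a uniform $L^2(0,T;H^1(\R^d))$ bound on $(K_\delta*\rho^\delta)^{m/2}$, which, via a refined Aubin--Lions lemma, is the actual engine behind the strong $L^m$ convergence and the weak $L^2$ convergence of $\na(K_\delta*\rho^\delta)^{m/2}$.

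Two secondary remarks. First, your appeal to ``a uniform spatial support estimate propagated from the compact support of $\rho_{in}$ via the bounded velocity argument'' is not available uniformly in $\delta$: the $L^\infty$ bound on $v^\delta$ in the proof of Theorem~\ref{thm:stab} degenerates as $\delta\to0$, and the paper only uses the second moment bound \eqref{eq:second} for tightness. Second, once $K_\delta$ is moved across the convolution in the weak formulation, a commutator error $E^\delta=K_\delta*(\rho^\delta\na\psi)-(K_\delta*\rho^\delta)\na\psi$ appears and must be shown to vanish; your sketch passes over this, though it is a comparatively routine point. The overall architecture (compactness, limit passage in the weak form, identification by uniqueness, upgrade to the full sequence) matches the paper's; the proof as written, however, does not close without replacing the interaction-energy compactness argument by the entropy dissipation estimate.
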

We provide a  proof of this result in Appendix \ref{app:conv} by adapting 
 the arguments developed in \cite{carrillo2023nonlocal}  to prove a similar results when $G=0$.

\medskip

Theorem \ref{thm:convdelta} does not address the convergence of the microscopic model \eqref{eq:blob} when $\delta\to 0$ since the condition $\overline {\E_m} [\rho_{in}]<\infty$ excludes empirical measures.
As explained in \cite{carrillo2023nonlocal}, this result can however be combined with \eqref{eq:Stability2}:
If we approximate the measure $\rho_{in}$ by a sequence of empirical measures such that $ d_W(\rho_{N,\delta} (t=0), \rho_{in} ) \leq \frac C N$ and $N = o\left( e^{-\frac{1}{\delta^{2+d(m-1)}}}\right)$, then using \eqref{eq:Stability2} and the convergence of $\rho^\delta$ to $\rho$, we can show that 
the empirical distribution $\rho_{N,\delta}(t,x)$ converges to $\rho(t,x)$ solution of \eqref{eq:LSS} with initial condition $\rho_{in}$. 
This restriction on $N$ is very far from the scaling $N\sim \delta^{-d}$ necessary to preserve the total mass in our microscopic interpretation of the  blob  model. Numerical evidence (see \cite{CEHT23,CB16}) suggests that a good level of approximation  holds for much smaller number of particles but justifying this limit is a challenging  open problem.

To summarize, 
equation \eqref{eq:double_conv} preserve the empirical measure and thus describe precisely the evolution of a system of particles evolving according to the microscopic model \eqref{eq:blob}. 
When $\delta\to0$, this equation \eqref{eq:NLSS} is an approximation of~\eqref{eq:LSS} (which does not preserve the Dirac structure of $\rho_N$ in its evolution). 

To the best of our knowledge, this  approach to approximating nonlocal diffusion equations by particle methods originates from the works of Mas-Gallic~\cite{MG87} for kinetic equations and Oelschl\"ager~\cite{O90} for the quadratic porous medium equation. We also mention~\cite{LM01, DM90, FP08, CDW23} in this direction. The convolution by ${K}_\delta$ alters the diffusion in the sense that particles remain particles: $\rho_N(t) = \frac{1}{N}\sum_{i=1}^N\delta_{x_i(t)}$ is a solution to the regularised equation where each particle $x_i(t)$ evolves according to the system of characteristics.

\medskip

\subsection{Other microscopic models with repulsion}  
A similar regularization approach  can be used, at least formally, to justify other macroscopic models. For example, Karper, Lindgren and Tadmor \cite{Tadmor} followed a similar approach and  proposed the following microscopic model for chemotaxis:
\begin{align*}
	\begin{split}
\dot{x}_i (t)
& =   \na \phi (x_i(t)) -  \frac{ \beta(\mu_N(x_i))}{\mu_N(x_i(t))} \na  \mu_N(x_i(t) ).
\end{split}
\end{align*}
with  $\beta(\mu) := \frac{\mu}{1-\mu}$.
The corresponding empirical distribution then solves
\begin{equation}
	\label{eq:tadmor_conv}
\pa_t \rho_N + \div(\rho_N \na  \phi  ) = \div\left( \frac{\rho_N}{ K_\delta*\rho_N} \beta( K_\delta*\rho_N) \na (K_\delta*\rho_N)\right)
\end{equation}
and in the limit $N\to\infty$ and $\delta\to0$, we formally obtain 
\begin{equation}
	\label{eq:tadmor}
\pa_t \rho + \div(\rho \na \phi) = \div \left( \frac{\rho}{1-\rho}\na \rho\right)
\end{equation}
which corresponds to \eqref{eq:LSS} with $f(\rho)  = (1-\rho)\ln(1-\rho)$.
The use of such singular pressure is a classical way to enforce a congestion constraint $\rho\leq 1$ (see for instance \cite{HP02}) and a model similar to \eqref{eq:tadmor} was derived in \cite{Lush} from a stochastic particle system with volume effect, by enforcing the fact that two particles (which are assumed to be  rigid squares in \cite{Lush}) cannot overlap.
Justifying the limit from \eqref{eq:tadmor_conv} to \eqref{eq:tadmor}
rigorously, in the spirit of Theorem \ref{thm:convdelta}, is, to the best of our knowledge an interesting open problem.

\medskip

Other microscopic models have been proposed to describe short range repulsion. 
For example, Fischer, Kanzler and  Schmeiser \cite{FKS}  consider a one-dimensional model of particles
 $x_0(t)<x_1(t)<\cdots <x_N(t)$ in which each agent $x_i$ only interacts with its two immediate neighbors $x_{i-1}$ and $x_{i+1}$ via a repulsive force which depends only on their distance.
They derive a nonlinear diffusion model (depending on the repulsive force).
The extension of this model to higher dimensions is highly non-trivial and is very much related to similar difficulties faced in the so-called `upwind' scheme for hyperbolic equations~\cite{DFR15,FT22} for continuity equations with non-linear mobilities.
Other topological models, in which interactions depend on the ``rank" of a neighbor rather than its distance, have been studied for example by Blanchet and Degond \cite{BDegond16,BDegond17}. 
Finally, let us point out that there are many models of pedestrian flows that also include the congestion constraint (see for instance \cite{H02,LW11,CGL12}).

\section{Hard-sphere (incompressible) models}\label{sec:HS}
We now turn our attention to  the hard-sphere (or incompressible) model.
At the microscopic scale, the particles, still represented by balls  of radius $\delta>0$, are  prevented from overlapping (congestion constraint). At the macroscopic scale, this will lead to the assumption that the population density cannot increase above a prescribed value. 

On the one hand, we will see that the relationship between microscopic and macroscopic models is a lot more complex in this case than in the soft sphere-case  of the previous section.
On the other hand, we will show that the macroscopic hard-sphere model can easily be derived as the singular limit $m\to\infty$ of the (macroscopic) soft sphere models.
In the first part of the discussion, we assume that the ``desired" velocity field is given  a priori, but we also discuss the well-posedness properties of this macroscopic model when this velocity field is the gradient of the attractive chemotaxis potential.

\subsection{The microscopic hard-sphere model}\label{sec:micro}
The microscopic model we now introduce has been extensively studied in the context of congested crowd motion, in particular by Maury, Santambrogio et al. (see \cite{MRS,MRSV,Maury11,FM}). In this context the domain $\Omega$ and the boundary condition (existence of ``exits") play a very important role; this aspect will not be discussed in this paper.

Unlike the soft sphere model, in which the velocity is modified by a repulsive force, we now   impose the hard-sphere constraint 
\begin{equation}\label{eq:constraint}
D_{ij}({\bf x} (t)):=|x_i(t)-x_j(t)|\geq 2\delta \qquad \mbox{ for all $i\neq j$, for all $t\geq0$}.
\end{equation}
In terms of the empirical measure $\rho_N (t,x)  = \frac 1 N \sum_{j=1}^{N} \delta(x-x_j(t))$, and using the notation of the previous section with $K=\chi_{B_1}$, we can also write this constraint as  
\begin{equation}\label{eq:density_nonlocal}
\mu_N = K_\delta * \rho_N \leq 1.
\end{equation}
As a consequence, the actual velocity of a particle will be the projection of the desired velocity onto the set of admissible velocity, namely velocities that preserve the constraint \eqref{eq:constraint}.

\medskip

Following \cite{MRSV}, we assume that the particle $i$ has a desired velocity $v_i $ which might depend on the positions of all the particles.
Below, we use the notation ${\bf x}= (x_1,\dots , x_N)\in (\R^{d})^N$ to denote the position of the $N$ particles, and ${ \bf v}({\bf x}) = (v_1({\bf x}), \dots , v_N({\bf x}))$ for their desired velocity.
The constrained microscopic model can then be written (see \cite{MRSV}):
\begin{equation}\label{eq:ODEu}
\dot{{\bf x}}(t) = {\bf u}({\bf x}),   \quad {\bf u}= P_{C({\bf x})} {\bf v}, 
\end{equation}
where $P_{C({\bf x})} {\bf v}$ denotes the projection of ${\bf v}$ onto the set  $C({\bf x})$ of admissible velocities.
If we define $D_{ij}({\bf x}) := |x_i-x_j|$, then admissible velocities must be such that $D_{ij}({\bf x}) $ cannot decrease if 
 the $i$ and $j$ particles are already touching. This leads to 
$$
C({\bf x}):= \{{\bf u}\in (\R^{d})^N\, :\, \nabla D_{ij}({\bf x}) \cdot {\bf u} \geq 0  \hbox{ for all $i,j$ such that } D_{ij}({\bf x}) = 2\delta\}.
$$
Alternatively, using the fact that $\frac{d}{dt} |x_i(t)-x_j(t)|^2 = 2 (x_i(t)-x_j(t))\cdot (u_i({\bf x}(t))-u_j({\bf x}(t)))$, we can write 
\begin{equation}\label{eq:C}
C({\bf x})= \{{\bf u}\in (\R^{d})^N\, :\, 
(u_i-u_j)\cdot (x_i-x_j) \geq 0  \hbox{ for all $i,j$ such that } |x_i-x_j| = 2\delta\}.
\end{equation}
Following \cite{Maury11,MRSV}, one can show that the projection $ {\bf u}=P_{C({\bf x})} {\bf v}$ in $(\R^{d})^N$ can be characterized as follows:
Using the notation $i\sim j$ to indicate that the particle $i$ is in contact with the particle $j$ and $e_{ij}  = \frac{x_j-x_i}{|x_j-x_i|}$, we have
\begin{equation}\label{eq:uv}
u_i = v_i - \sum_{j\sim i} p_{ij} e_{ij}
\end{equation}
where the pressure $p_{ij}=p_{ji}$ satisfies
$$
p_{ij}\geq 0, \quad u_i \cdot e_{ij}+ u_j\cdot e_{ji}\leq 0, \qquad \forall i\sim j 
$$
(this last condition expresses the non-overlapping constraint)
and 
 $$
 u_i \cdot e_{ij}+ u_j\cdot e_{ji}=0 \mbox{ if } p_{ij}\neq 0.
 $$
This condition expresses the fact that if $p_{ij}\neq 0$, then the particles will stay in contact, i.e. the pressure prevents overlapping but does not push the particles away from each other.

\medskip

The existence of a unique solution to~\eqref{eq:ODEu} (for $\bf v$ Lipschitz and bounded) is proved in  \cite{Maury11} (see also \cite[Theorem 3.2]{MRS}). Except in the case of $d=1$, this does not fall into standard ODE theory because the set of admissible particle configurations 
\[
Q = \{\mathbf{x} = (x_1,x_2,\dots, x_N)\in(\R^{d})^N \, : \, |x_i - x_j|\ge 2\delta, \quad \forall i\neq j\}
\]
is not convex in general. But it satisfies the weaker property of \textit{unifom prox-regularity} which is sufficient for well-posedness.

\medskip

As a final comment, we recall that in this microscopic model, it is possible to consider that each particle $x_i$ has its own desired velocity $v_i$ (see \cite{MRSV}). This distinction is not possible in the macroscopic model, which does not distinguish particles from each other.  Thus in consideration of passing to the macroscopic model  it makes sense to assume that the velocity of the $i^{th}$ particles $v_i({\bf x})$ is equal to $\na \phi (x_i)$ for some potential $\phi:\R^d\to \R$.
%vector field $V:\R^d\to \R^d$.
Given the corresponding projection ${\bf u}$, we denote by $\mathbb{P}_C(\na \phi)$ a vector field $\R^d\to \R^d$ such that $\mathbb{P}_C(\na \phi)(x_i) = u_i({\bf x})$ (only the values of this vector field at the points $x_i$ are uniquely defined).
We can then write that the empirical distribution satisfies
\begin{equation} \label{eq:empc}
\pa_t \rho_N + \div(\rho_N \mathbb{P}_C(\na \phi))=0, \qquad \mu_N(t) = K_\delta *\rho_N \leq 1 .
\end{equation}
 \smallskip

\subsection{The macroscopic hard-sphere model}\label{sec:hs}
Formally, the same principle can be applied to the macroscopic system:
Here, we consider the formulation
 \eqref{eq:empc} of  \eqref{eq:ODEu} and we write that the density $\rho(t,x)$ solves the continuity equation
$$\pa_t \rho + \div (\rho\,  \mathbb{P}_{C_\rho}(\na \phi))=0, \qquad \rho\leq 1.$$
%The projection   $ \mathbb{P}_{C_\rho}(\na \phi)$
% denotes the projection of $V$ onto 
The set of admissible velocities, which preserves the local constraint $\rho\leq 1$, can now be defined by duality as
\[
C_\rho = \left\{
U \in L^2(\R^d; \, \R^d) \, ;\, \int_{\R^d} U\cdot \na q \le 0, \quad \forall q\in H^1_\rho(\R^d)
\right\},
\] 
where
\[
H^1_\rho(\R^d) = \left\{
q\in H^1(\R^d) \, ;\, q\ge 0\, \text{ a.e. in }\R^d \text{ and }q(x)(1-\rho(x)) = 0 \text{ a.e.}
\right\}
\]
(we can also write $H^1_\rho(\R^d) = \left\{q\in H^1(\R^d) \, ;\, q(x)\in \pa f_\infty(\rho(x)) \text{ a.e.}\right\}$, but the condition $q(x)(1-\rho(x)) = 0 $ is a common way of writing the condition $q(x)=0 \, \text{ a.e. in }\{\rho <1\} $)

Formally at least, this projection operator guarantees that the density satisfies the incompressibility constraint  $\rho\leq 1$
since it implies $\div U\geq 0$ in (the interior of) the set $\{\rho=1\}$.
In order to develop a well posedness theory for this model, we first show that it can be rewritten without the projection operator via the introduction of a pressure term (as is commonly done in incompressible fluid dynamic - although in our case the constraint is $\rho\leq 1$ and not $\rho=1$).

To derive this formulation, we note that the projection $U:=\mathbb P_{C_\rho} (\na \phi)$ is  characterized by the following inequality:
\begin{equation}\label{eq:b}
\int_{\R^d} (U-\na \phi)\cdot (U-Q) \, dx \leq 0, \qquad\forall Q \in C_\rho.
\end{equation}
This projection  gives rise to a Lagrange multiplier in the form of a pressure term: One can show\footnote{
This follows from Moreau's decomposition theorem since the set $\mathcal K_\rho = \{\na p\,;\, p\in  H^1_\rho(\R^d)\}$
is a closed convex cone in $L^2(\R^d; \, \R^d)$ and its polar cone is exactly $\mathcal K_\rho^\circ = C_\rho$.
It follows that all $\na \phi\in L^2(\R^d; \, \R^d)$ can be written in a unique way as
$ \na \phi = \mathbb{P}_{ C_\rho} (\na \phi) +  \mathbb{P}_{\mathcal K_\rho} (\na \phi).$
}
 that there must exist $p\in H^1_\rho(\R^d)$ such that 
$$U= \na \phi - \na p.$$
The velocity $U =  \mathbb{P}_{C_\rho} (\na \phi)$ is in $C_\rho$ and thus satisfies
$\int_{\R^d} U \cdot \na q \, dx \leq 0$ for all $ q\in H^1_\rho(\Omega)$ and the pressure satisfies the orthogonality condition (which follows from \eqref{eq:b} by taking $Q=0$ and $Q=2U$):
$
 \int_{\R^d} U \cdot \na p \, dx = 0.
$
We deduce that $-\int_{\R^d} U\cdot \na (p-q)\, dx \leq 0$ for all $q\in H^1_\rho$, which  implies that
$p$ solves the variational inequality
\begin{equation}\label{eq:obstacle}
\begin{cases}
p\in H^1_\rho \\
\displaystyle \int_{\R^d}[ \na p-\na \phi] \cdot \na (p-q) \leq 0 \, , \qquad \forall q\in H^1_\rho
\end{cases}
\end{equation}
This variational inequality is in fact an obstacle problem (because of the constraint $p\geq 0$).
We can summarize this discussion with the following proposition (see \cite[Appendix B]{KMW}):
\begin{proposition}
Assume $\phi \in H^1$ and 
let $p(x)$ be the (unique) solution of the variational inequality  \eqref{eq:obstacle}. Then 
$ \mathbb{P}_{C_\rho} (\na \phi) = \na \phi - \na p $.
\end{proposition}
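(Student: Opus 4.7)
The strategy follows the hint in the footnote: identify $\nabla p$ from \eqref{eq:obstacle} as the orthogonal projection of $\nabla \phi$ onto the closed convex cone
\[
\mathcal{K}_\rho := \{\nabla q \, :\, q \in H^1_\rho(\R^d)\} \subset L^2(\R^d;\, \R^d),
\]
and recognize $C_\rho$ as its polar cone, so that Moreau's decomposition theorem yields the claim at once.

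First I would check that $\mathcal{K}_\rho$ is indeed a closed convex cone in $L^2$. Convexity and the cone property are straightforward, since the constraints $q\geq 0$ and $q(1-\rho)=0$ defining $H^1_\rho$ are preserved under sums and under multiplication by non-negative scalars. Closedness is more delicate: from a Cauchy sequence $\nabla q_n$ in $L^2$ one needs a Poincar\'e-type bound on $q_n$ itself to extract a limit $q\in H^1(\R^d)$, and then one verifies along a subsequence (by a.e.\ convergence) that $q\geq 0$ and $q(1-\rho)=0$, so that $q\in H^1_\rho$ and $U=\nabla q\in \mathcal{K}_\rho$. Anchoring the Poincar\'e inequality in $\R^d$ uses that $q_n$ vanishes on the (assumed non-negligible) set $\{\rho<1\}$. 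This is the main technical obstacle; the remaining steps are essentially bookkeeping around a classical Hilbert space theorem.

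Next I would identify $\mathcal{K}_\rho^\circ = C_\rho$. Unwinding the definitions, $U$ belongs to the polar cone $\mathcal{K}_\rho^\circ$ iff $\int_{\R^d} U\cdot \nabla q\,dx \leq 0$ for every $q\in H^1_\rho$, which is precisely the defining condition for $U\in C_\rho$. Moreau's decomposition theorem then asserts that every $V\in L^2(\R^d;\,\R^d)$ admits a unique orthogonal decomposition $V = \mathbb{P}_{\mathcal{K}_\rho}V + \mathbb{P}_{C_\rho}V$ with $\mathbb{P}_{\mathcal{K}_\rho}V \cdot \mathbb{P}_{C_\rho}V = 0$ in $L^2$.

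Finally, applied to $V=\nabla \phi \in L^2$, the projection $\mathbb{P}_{\mathcal{K}_\rho}(\nabla \phi)$ is characterized as the minimizer of $q\mapsto \tfrac{1}{2}\int_{\R^d} |\nabla \phi - \nabla q|^2\,dx$ over $q\in H^1_\rho$, whose first-order optimality condition is exactly the variational inequality \eqref{eq:obstacle}. The uniqueness part of the statement of \eqref{eq:obstacle} identifies $\mathbb{P}_{\mathcal{K}_\rho}(\nabla \phi) = \nabla p$, and Moreau's identity then gives $\mathbb{P}_{C_\rho}(\nabla \phi) = \nabla \phi - \nabla p$, as claimed.
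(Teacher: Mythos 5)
Your proposal is correct and relies on exactly the same machinery as the paper: Moreau's decomposition together with the identification of $C_\rho$ as the polar cone of the closed convex cone $\mathcal{K}_\rho=\{\nabla q : q\in H^1_\rho\}$. The only difference is directional: the paper's surrounding text starts from $U=\mathbb{P}_{C_\rho}(\nabla\phi)$, invokes Moreau to produce some $p\in H^1_\rho$ with $U=\nabla\phi-\nabla p$, and then verifies (via the orthogonality $\int U\cdot\nabla p=0$ obtained by testing \eqref{eq:b} with $Q=0$ and $Q=2U$) that this $p$ solves \eqref{eq:obstacle}; you instead start from the unique solution $p$ of \eqref{eq:obstacle}, recognize it as characterizing $\nabla p=\mathbb{P}_{\mathcal{K}_\rho}(\nabla\phi)$, and read off the claimed identity from Moreau. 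Since \eqref{eq:obstacle} has a unique solution, these are two faces of the same argument, and your direction is in fact the one that most directly matches the way the proposition is stated. Your flag on the closedness of $\mathcal{K}_\rho$ is the right thing to worry about; in the paper's setting it is handled by the compact support of $\rho$ (so $\{\rho=1\}$ is bounded and a Poincar\'e inequality applies to $q\in H^1_\rho$, which vanishes on the unbounded set $\{\rho<1\}$), but it is good that you noticed the footnote glosses over this.
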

The macroscopic hard-sphere model can thus be written as:
\begin{equation}\label{eq:incomp2}
 \pa_t \rho + \div(\rho \na \phi) = \div (\rho \na p), \qquad p\in \pa f_\infty(\rho).
 \end{equation}

\subsection{Well-posedness of the PKS hard-sphere model}
In this section, we discuss the well-posedness of the model \eqref{eq:incomp2} when the velocity field $V$ is the attractive  chemotaxis gradient from \eqref{eq:LHS}. We can also write this as:
\begin{equation}\label{eq:PKSincomp}
%\begin{cases}
 \pa_t \rho + \div(\rho \na G* \rho) = \div (\rho \na p), \qquad p\in \pa f _\infty(\rho).\\
%\sigma c - \Delta c = \rho
%\end{cases}
 \end{equation}
A solution of  \eqref{eq:PKSincomp} can be constructed either as a gradient flow or as the singular limit of the  nonlinear diffusion model.

The gradient flow approach was first developed in the context of congested crowd motion in~\cite{MRS}. In that paper, the velocity field was given by $V=-\na \phi$ for a continuous and $\lambda$-convex potential $\phi$. 
A similar approach was later carried out with the chemotaxis potential in \cite{KMW}:
Equation \eqref{eq:PKSincomp} is the 2-Wasserstein gradient flow of the energy $\E_\infty$ defined by \eqref{eq:Einfty} 
and existence of a solution can be proved via a discrete time approximation based on the classical JKO minimization scheme. In fact, we have:
\begin{theorem}\label{thm:existenceinc}
Let $G$ be given by \eqref{eq:G}. Given $\rho_{in}\in \mathcal P_2(\R^d)$ such that $\rho_{in}\leq 1$ and 
$\supp \rho_{in} \in B_R$.
Then Equation \eqref{eq:LHS} (or \eqref{eq:PKSincomp}) has a unique global solution $\rho(t,x)$ with initial data $\rho_{in}(x)$.
Furthermore, there exists $R(t)$ such that
$$  \supp \rho(t) \subset B_{R(T)} \quad \forall t\in[0,T].
$$ 
The density satisfies $\rho\in C^{1/2}([0,\infty);\P_2(\R^d))$ and the pressure is such that $p\in L^2(0,\infty;H^1(\R^d))$, which is enough to make sense of the condition $p(1-\rho)=0$.
\end{theorem}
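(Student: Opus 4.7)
The plan is to construct the solution via the Jordan--Kinderlehrer--Otto (JKO) minimizing movement scheme for the energy $\E_\infty$ defined by \eqref{eq:Einfty}, in the spirit of \cite{MRS} for congested crowd motion, adapted to the chemotaxis potential as in \cite{KMW}. Given $\tau>0$ and $\rho^0_\tau:=\rho_{in}$, I recursively define $\rho^{n+1}_\tau$ as a minimizer of
$$
\rho \longmapsto \frac{1}{2\tau} d_W^2(\rho,\rho^n_\tau) + \E_\infty[\rho]
$$
over $\P_2(\R^d)$. Since $\F_\infty$ forces admissible densities to satisfy $\rho\le 1$, and since $G\in L^1$ gives a uniform lower bound on the interaction term on this class, a minimizer exists by the direct method (weak-$*$ $L^\infty$ compactness together with the tightness coming from the $d_W$-penalty).

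\textbf{Euler--Lagrange and discrete estimates.} Computing the first variation of the JKO functional along smooth displacement perturbations, one obtains a nonnegative pressure $p^{n+1}_\tau\in H^1(\R^d)$ concentrated on $\{\rho^{n+1}_\tau=1\}$, with
$$
\frac{x-T^{n+1}_\tau(x)}{\tau} = \na G*\rho^{n+1}_\tau(x) - \na p^{n+1}_\tau(x)
\qquad \rho^{n+1}_\tau\text{-a.e.},
$$
where $T^{n+1}_\tau$ is the optimal transport from $\rho^n_\tau$ to $\rho^{n+1}_\tau$. The telescoping argument yields
$$
\E_\infty[\rho^N_\tau] + \sum_{n=0}^{N-1}\frac{1}{2\tau} d_W^2(\rho^{n+1}_\tau,\rho^n_\tau) \le \E_\infty[\rho_{in}],
$$
which provides the uniform $C^{1/2}$-in-time estimate in $\P_2$ for the piecewise-constant interpolants and, combined with the obstacle structure $p=0$ on $\{\rho<1\}$, an $L^2_t H^1_x$ bound on the pressure.

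\textbf{Support propagation and passage to the limit.} The constraint $\rho\le 1$ combined with $\rho_{in}$ of finite mass yields, via elliptic regularity for \eqref{eq:G}, a uniform $L^\infty$ bound on $\na G*\rho^n_\tau$. Together with the non-negativity of $p$ and the $L^\infty$ bound on $\rho$, this bounds the effective velocity and produces a uniform-in-$\tau$ control on $\supp\rho^n_\tau$ over bounded time intervals. Standard compactness arguments then extract a subsequential limit $\rho\in C^{1/2}([0,\infty);\P_2(\R^d))$ with $p\in L^2(0,\infty;H^1(\R^d))$ solving \eqref{eq:PKSincomp}; the complementarity relation $p(1-\rho)=0$ is inherited in the limit using the strong $L^1$ compactness afforded by the uniform support bound.

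\textbf{Uniqueness — the main obstacle.} I expect uniqueness to be the hardest part, because $\E_\infty$ fails to be $\lambda$-geodesically convex in $\P_2$: the attractive term $-\tfrac12\iint G\,d\rho\,d\rho$ is concave in the density and has the wrong sign for a straightforward Wasserstein semiconvexity estimate. The plan is to compare two solutions $\rho_1,\rho_2$ by differentiating $\tfrac12 d_W^2(\rho_1(t),\rho_2(t))$ via the Otto calculus, splitting the resulting identity into (i) a monotone contribution from the pressure, using the convexity of $f_\infty$ and the fact that $p_i$ is supported in $\{\rho_i=1\}$, and (ii) an error from the interaction term, controlled by $\|D^2G*(\rho_1-\rho_2)\|_{L^\infty}$, which is bounded thanks to the elliptic regularity for \eqref{eq:G} and the $L^\infty$ bounds on $\rho_i$. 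A Grönwall argument then closes the estimate and forces $d_W(\rho_1(t),\rho_2(t))\equiv 0$ whenever the initial data coincide.
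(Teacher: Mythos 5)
Your existence construction (JKO scheme for $\E_\infty$, discrete Euler--Lagrange identity producing the pressure, telescoping energy estimate, compactness) is exactly the route the paper takes, following \cite{MRS} and \cite{KMW}; the paper also notes an alternative existence proof by passing to the limit $m\to\infty$ from the soft-sphere model as in \cite{HLP}. Two small caveats on that part: the effective velocity is \emph{not} pointwise bounded, since $\na p$ is only controlled in $L^2_tH^1_x$, so the finite-speed-of-propagation statement $\supp\rho(t)\subset B_{R(T)}$ cannot be deduced simply by bounding the velocity; it requires a separate barrier/comparison argument at the discrete level (as in \cite{KMW}). Likewise, identifying the limiting velocity as $\na G*\rho-\na p$ with the correct complementarity condition requires some strong compactness of the pressure (or the obstacle-problem characterization of Section \ref{sec:proj}), not just weak limits.

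The genuine gap is in your uniqueness argument. The paper does \emph{not} prove uniqueness by a Wasserstein--Gr\"onwall estimate; it uses the duality ($L^1$-type) method of \cite{PQV}, adapted to \eqref{eq:PKSincomp} in \cite{KMW} and \cite{HLP}, precisely because the Otto-calculus route does not close here. Concretely, your claim that the interaction error is ``controlled by $\|D^2G*(\rho_1-\rho_2)\|_{L^\infty}$, which is bounded thanks to elliptic regularity and the $L^\infty$ bounds on $\rho_i$'' is false: Calder\'on--Zygmund estimates fail at $p=\infty$, so $\sigma\phi-\eta\Delta\phi=\rho$ with $\rho\in L^1\cap L^\infty$ yields $\phi\in W^{2,p}$ for all $p<\infty$ but only $D^2\phi\in \mathrm{BMO}$ (equivalently, $\na\phi$ is log-Lipschitz), not $D^2\phi\in L^\infty$. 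Relatedly, $G\sim|x|^{2-d}$ is neither $\lambda$-convex nor semiconcave, so the ``same-flow'' term $\int(\na G*\rho_1(x)-\na G*\rho_1(y))\cdot(x-y)\,d\gamma$ cannot be bounded by $C\,d_W^2$; at best a Loeper-type/Osgood argument gives an $r\log r$ modulus. Finally, the asserted monotonicity of the pressure contribution requires identifying $-\na p_i$ as an element of the Wasserstein subdifferential of $\F_\infty$ at $\rho_i$ for \emph{both} weak solutions being compared (not just for the one you constructed), which is not established and is essentially as hard as uniqueness itself. If you want to complete the proof as the paper does, replace this step by the duality argument: subtract the two equations, test against the solution of a suitable backward dual (degenerate parabolic) problem, and use $p_i(1-\rho_i)=0$ together with the regularizing properties of the kernel \eqref{eq:G} to conclude $\rho_1=\rho_2$ and $\rho_1 p_1 = \rho_2 p_2$.
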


When the drift term $\na \phi$ is fixed, uniqueness with density constraint is proved in  \cite{DiMe16} assuming some monotonicity for $\na \phi$, and in \cite{Igbida} in the spirit of DiPerna-Lions theory for transport equations, assuming some sobolev regularity for $\phi$.
Uniqueness for a related model (without the drift term) was also proved in \cite{PQV} using a duality method that turns out to be very adaptable and was used in \cite{KMW} and \cite{HLP} to prove uniqueness for \eqref{eq:PKSincomp}.

Theorem \ref{thm:existenceinc} (existence and uniqueness) also follows from \cite{HLP}, where a solution is constructed by passing to the limit $m\to \infty$ in the soft sphere model \eqref{eq:LSS}. 
Note that the   incompressible limit $m\to\infty$ for \eqref{eq:LSS} is a classical problem that goes back to the 80's: the ``stiff (or incompressible) limit" of the porous media equation.
We can understand this limit by noticing that \eqref{eq:LSS}  has the same form as \eqref{eq:PKSincomp} but with $p=f'_m(\rho)$.
We can also make sense of this limit at the level of the energy. Indeed,
equation \eqref{eq:LSS} is the 2-Wasserstein gradient flow 
for the energy $\E_m$ defined by \eqref{eq:Ef}
which $\Gamma$-converges to $\E _\infty[\rho]$. The $\Gamma$-convergence of the energy does not imply the convergence of the corresponding gradient flow, but it is a good indication that such a convergence might take place.

There are numerous references for this limit $m\to\infty$ in various frameworks: We refer to \cite{Vaz2007,GQ03,EHKO1986}  for the classical porous media equation $\pa_t \rho= \div(\rho\na  f_m'(\rho))$ and to  \cite{PQV,DP,KPS,MPQ,KP, KT,DS} for more recent results in the context of tumor growth modeling, which includes a non-negative source term $\pa_t \rho = \div(\rho\na  f_m'(\rho)) + \rho F(f_m'(\rho))$ (more recently \cite{GKM},  these results were extended to the case of the porous media with an absorption  term, which leads to interesting behavior).
For the equation with a fixed drift term, the limit was first investigated with a fixed potential
$\pa_t \rho + \div(\rho\na \phi ) = \div(\rho\na  f_m'(\rho)) $ which is an important pedestrian model for congested crowd motion (see \cite{CKY,AKY,BKP,DDP23}). 
Finally, the limit was established for the attractive chemotaxis problem \eqref{eq:G} with pure Newtonian attraction $\sigma=0$ (the analysis can be extended to the case $\sigma>0$, which does not affect the singularity of the kernel at $0$):
This was first done in \cite{CKY} for particular solutions (patch solutions) 
using variational methods and the viscosity solution approach. The convergence of the gradient flow of $\E _m$ for general initial conditions was then proved in \cite{CT20}, and finally, the convergence of weak solutions was proved in \cite{HLP} by a more direct approach (which can be generalized to include source terms for example, see  \cite{JKT21}).

\medskip

The key feature of this limit is the fact that $p_m(\rho)=f'_m(\rho) \to \pa f_\infty(\rho)$ when $m\to\infty$.  There are other natural pressure terms that lead to a similar result.
For example, one can replace the pressure $p_m(\rho)$   with a singular pressure such as 
\begin{equation}\label{eq:pa1}
p_\alpha (\rho)=  
\begin{cases}
 \alpha \frac{\rho}{1-\rho} & \mbox{ if } \rho<1\\
 \infty & \mbox{ if } \rho\geq 1.
 \end{cases}
\end{equation}
or 
\begin{equation}\label{eq:pa2}
p_\alpha (\rho)=
\begin{cases}
-\alpha  \log(1-\rho)& \mbox{ if } \rho<1\\
 \infty & \mbox{ if } \rho\geq 1.
 \end{cases}
\end{equation}
Such pressure terms lead to solutions satisfying $\rho<1$ (for $\alpha>0$), but in the limit $\alpha \to 0$, we find $p_\alpha (\rho)\to \pa f_\infty(\rho)$.
The corresponding limits were made rigorous in   \cite{HV17,DHV,DDP23}  for \eqref{eq:pa1} (with a fixed potential $\phi$ and with convergence rates) and in  \cite{Tadmor} for \eqref{eq:pa2} (with the attractive chemotaxis potential with $\sigma=0$).

\subsection{The projected velocity}\label{sec:proj}
In the approach discussed above, solutions of 
 \eqref{eq:PKSincomp} are constructed by imposing the constraint $\rho\leq 1$, rather than by using the projection $\mathbb P_{C_\rho}(\na \phi)$. 
However, it is possible to show that the effective velocity in  \eqref{eq:PKSincomp}, $\na \phi -\na p$, is indeed the expected projection.
Following \cite{MRS}, one can show that
$$
\int_{\R^d} \na q(x) \cdot (\na p(t_0,x) - \na \phi(t_0,x)) \, dx = 0 \qquad\forall q\in H^1_{\rho(t_0)}, \mbox{ a.e. } t_0>0.
$$
Since $p\in H^1_{\rho(t_0)}$, this implies that \eqref{eq:obstacle} holds for a.e. $t_0>0$. In fact it implies the   stronger statement 
$$ 
\int_\Omega[ \na p-\na \phi] \cdot \na (p-q) = 0\qquad \forall q\in H^1_{\rho}.
$$
By taking $q(x)=p(t_0,x) \vphi(t_0,x))$, we deduce in particular
$$
\int_{\R^d} \na (p(t_0,x)\vphi(t_0,x))  \cdot (\na p(t_0,x) - \na \phi(t_0,x)) \, dx = 0 \qquad\mbox{ a.e. } t_0>0,
$$
which implies  the so-called {\it complementarity condition}
\begin{equation}\label{eq:comp}
 p (\Delta p -\Delta \phi)=0 , \mbox{ in } \mathcal D'((0,T)\times \R^d), \quad p(t)\in H^1_{\rho(t)}.
 \end{equation}

It is also possible to derive these properties as part of  the construction process.
Indeed, both the solution of the nonlinear diffusion equation and the discrete time approximation given by the JKO scheme satisfy approximation of  \eqref{eq:comp}.
Intuitively, passing to the limit in these approximations require the strong convergence of the pressure in $H^1(\Omega)$.
This is done in \cite{PQV} in the time-monotone  model for tumor growth. The case of the more complex model with a term representing nutrients (which is no longer monotone), was then treated in \cite{DP} by generalizing the so-called 
 Aronson-B\'enilan estimates to get some strong convergence on the gradient of the pressure.

This derivation was also performed in \cite{GKM} by showing directly that  for all $t>0$, the pressure $p(t)$ is the unique solution of the variational inequality \eqref{eq:obstacle}. 
This approach has two advantages: This formulation of the complementarity equation can be derived without requiring any strong convergence on $\na p$, and unlike \eqref{eq:comp}, it identifies the pressure for all time (rather than a.e.). In problems without  monotonicity in time, the support of the pressure can indeed be smaller than the set $\{\rho=1\}$, an important phenomenon which is clearly identified with this obstacle problem formulation.
This obstacle problem \eqref{eq:obstacle}  was derived for the limit of the JKO solution in \cite{KMW} as well.

\subsection{Relation between the microscopic and macroscopic models}\label{sec:mm}
Since both the microscopic particles model \eqref{eq:ODEu} and macroscopic model \eqref{eq:incomp2} have solutions, it is natural to ask whether these solutions converge to one another when $\delta\to0$ (for the purpose of this discussion, we focus on the case of a fixed velocity field $V(x)=\na \phi(x)$).
However, unlike the soft sphere model discussed in Section \ref{sec:blob}, the relation between the microscopic and macroscopic models is far from clear in the hard-sphere case. 
We recall that in the soft sphere case, this relationship was addressed in two steps: First there was a one-to-one correspondence between the solutions of the ODE system \eqref{eq:PKSmicro} and the empirical measures solutions of the PDE \eqref{eq:NLSS}. Second, solutions of  \eqref{eq:NLSS} (for absolutely continuous initial data) were proved to converge to solutions of \eqref{eq:LSS} as $\delta\to0$.
We will show below that the first step still holds in the hard-sphere models: Solutions of the system of ODE with hard-sphere constraint \eqref{eq:constraint} are associated to empirical measures solutions of \eqref{eq:NLHS}. 
But the second step in the limiting process ($\delta\to0$) is delicate at best. In fact, it is not obvious that the equation \eqref{eq:NLHS} is well-posed outside of the particular case of empirical measures associated to \eqref{eq:PKSmicro}. Without a well-posedness theory, a convergence result (the equivalent of Theorem \ref{thm:convdelta}) seems out of reach.

Beyond the technical difficulties, we should also point out that the dynamics of the microscopic and macroscopic models are quite different. For example, when the particle $i$ is in contact with several other particles, which is expected in congested configurations, the microscopic condition requires control on the velocity $v_i$ in every direction $x_i-x_j$ for which $D_{i,j}=0$. By contrast, the macroscopic constraint is a unique scalar constraint on the divergence of the velocity field. 
The dynamic of the microscopic model is thus much richer than that of the macroscopic model.
Simple examples (see \cite{M18}) show that slightly different arrangements of the particles at the microscopic level (corresponding to the same macroscopic distribution) can lead to completely different behavior.
\medskip

We now show that the nonlocal PDE \eqref{eq:NLHS} corresponds to the microscopic hard-sphere model when $K_\delta = \frac{1}{\omega_d \delta^d}\chi_{B_\delta}$.
We already noted (see \eqref{eq:density_nonlocal}) that the non overlapping constraint can be written as $\mu_N=K_\delta * \rho_N\leq 1$.
Next, we describe the set of admissible velocity: if $\pa_t \rho_N + \div (\rho_N U)=0$, and $ p\geq 0$ is a measure  supported on the set $\{\mu_N=1\}$, then the condition ``$\pa_t \mu_N\leq 0$ when $\mu_N=1$" implies
$$
 \frac{d}{dt}\int K_\delta * \rho_N p\, dx  =  \int  \rho_N\,  U \cdot  \na K_\delta* p \, dx \leq 0.
$$
We can thus define the set of admissible velocities as 
\begin{equation}
	\label{eq:barC}
\widetilde C_\delta(\rho_N) = \left\{ U:\R^d\to\R^d\,;\,  \int \rho_N \, U \cdot \na K_\delta* p\, dx \leq 0, \mbox{ for all } p\geq 0 , \quad p(1-K_\delta*\rho_N) =0\right\}.
\end{equation}
Importantly, this description at the level of empirical measures is consistent with \eqref{eq:C}: 
if $U(x_i) = u_i({\bf x})$, then the condition ${\bf u} \in C_\delta({\bf x})$ from \eqref{eq:C} is equivalent to $U\in \widetilde C_\delta(\rho_N) $.
For example, if the two particles $i$ and $j$ are touching (up to a translation, we can take  $x_i=-\delta e$ and $x_j=+\delta e$), then taking  $p(x) = (\delta - |x\cdot e|)_+\mathcal{H}^1|_{\R e}$ in \eqref{eq:barC} will lead to the condition in \eqref{eq:C}. 
\begin{figure}[H]
	\centering
	\begin{tikzpicture}
%		\draw[very thin, help lines] (-.1,-.1) grid (5.1,4.1);
		\draw (2.5,2) arc (0:360:1);
		\node at (1.5, 2.3) {$x_i = -\delta e$};
		\filldraw[black] (1.5, 2) circle (2pt);
		\draw (2.5,2) arc (180:540:1);
		\node at (3.5, 2.3) {$x_j = \delta e$};
		\filldraw[black] (3.5, 2) circle (2pt);
		\draw[blue,dashed] (0,2) -- (5,2) node[pos=0.9, anchor = south west] {$p(x)$};
		\filldraw[blue] (2.5, 2) circle (2pt);
	\end{tikzpicture}
	\caption{Schematic of two particles with radius $\delta$ in contact along the axis $e$. The pressure $p(x) = (\delta - |x\cdot e|)_+ \mathcal H^{1}|_{\R e}$ can be highly singular since it only needs to activate along the axis $e$.}
	\label{fig:2_touch}
\end{figure}
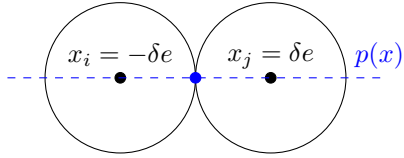
The simple illustration in Figure~\ref{fig:2_touch} sheds some light on another discrepancy between the microscopic model~\eqref{eq:ODEu} and the macroscopic model~\eqref{eq:incomp2} (already discussed in~\cite{MRSV,MRS}): the set of feasible velocities $C_\delta$ depends not only on the local density but also on the orientation of particles. In the macroscopic model~\eqref{eq:incomp2}, the feasible velocity only depends on the local density.

The set $\widetilde C_\delta(\rho_N)$ is the polar cone of
$$
\widetilde {\mathcal K}_\delta(\rho_N) = \left\{   \na K_\delta* p \, ;\,    p\geq 0 , \quad p(1-K_\delta*\rho_N) =0 \right\}
$$
in the Hilbert space $L^2(\R^d,d\rho_N)^d$.
In keeping with Moreau's decomposition theorem, we expect to be able to write 
$$
U =\mathbb P_{\widetilde C_\delta(\rho_N) }(\na \phi)= \na \phi - \na K_\delta*p
$$
for some pressure $p\geq 0$, satisfying $ p(1-\mu_N) =0 $. This is in fact correct when $\rho_N$ is an empirical distribution: 
This decomposition is equivalent to the (finite dimensional) projection of ${\bf v}$ given by \eqref{eq:uv} (see \cite{KMJW}).
The existence of a solution to the microscopic model (see Section \ref{sec:micro}) thus implies the existence of a solution (in the form of the associated  empirical measure) of
\begin{equation}\label{eq:incompdelta}
%\begin{cases}
\pa_t \rho + \div(\rho \na \phi) = \div (\rho \na K_\delta * p), \qquad p\in \pa f_\infty(K_\delta*\rho) 
%\end{cases}
\end{equation}
when $\rho_{in} = \frac 1 N \sum_{i=1}^N \delta(x-x_i(0))$
(where we recall  that $ p\in \pa f_\infty(K_\delta*\rho) $ is equivalent to 
$K_\delta*\rho\leq 1, \; p\geq 0, \; p(1-K_\delta*\rho)=0$).

It is now natural to try to mirror the analysis described in Section \ref{sec:blob} with this model  for general density, namely to prove a mean field limit, as in Theorem 
\ref{thm:stab} (the constant $\lambda_0$ given by \eqref{eq:lambda0}   depends on $m$, so the extension to the case $m=\infty$ is not obvious) and the limit $\delta\to0$. 
There is however a serious obstruction: Except for the particular case of empirical measures (for which the existence of a solution follows from the existence of a solution for the microscopic model), the existence of solution for \eqref{eq:incompdelta} is far from clear.
A discrete time approximation can be constructed via a JKO scheme with the energy
$$
  -\int_\Omega \phi(x) \rho (x) \, dx + \mathcal F_{\infty}[K_\delta*\rho], 
$$
but proving that the resulting velocity is indeed the projection of $\na \phi$ onto $\widetilde C_\delta(\rho)$ is challenging. 
Even then, it should also be noted that for general density $\rho$, it is not clear that the cone $\widetilde{ \mathcal K}_\delta(\rho)$ is closed and thus Moreau's decomposition theorem does not apply. 
Similar difficulties arise when trying to obtain the existence of a solution  of \eqref{eq:NLHS} by passing to the limit $m\to\infty$ in the soft sphere model \eqref{eq:NLSS}. This limit has been studied for the related Brinkman law model \eqref{eq:brink} \cite{PV15,KT}, but this case is somewhat easier since the condition $\rho\leq 1$ (as opposed to $\rho*K\leq 1$) is being enforced.

\medskip

\section{Hard-sphere models and Hele-Shaw  Free boundary problems}\label{sec:hsFBP}
In some settings, the hard-sphere model \eqref{eq:LHS} might lead to the formation and propagation of sharp edges. In fact the connection between the hard-sphere model \eqref{eq:LHS} and Hele-Shaw free boundary problem (with active potential, but without surface tension) is well documented:
If we assume that $\rho (t,x)= \chi_{E(t)}$ and denote $\phi(t,x) = G* \chi_{E(t)}(x)$, then \eqref{eq:LHS} implies (formally at least):
\begin{equation}\label{eq:HS1}
\Delta (p-\phi) = 0\quad  \mbox{ in } E(t),\qquad V = -\na (p-\phi)\cdot \nu\quad \mbox{ on } \pa E(t),
\end{equation}
where $V$ denotes the normal velocity of the interface $\pa E(t)$ and $\nu$ is the outward unit normal vector along $\pa E(t)$,
with the condition
\begin{equation}\label{eq:HS2} 
p(t,x) = 0\quad \mbox{ on } \pa E(t) .
\end{equation}
Equations \eqref{eq:HS1}-\eqref{eq:HS2} (which one can write with the effective pressure $q=p-\phi$) is reminiscent of a Hele-Shaw free boundary problem  (without surface tension) or one-phase Muskat problem.

\medskip

However, the assumption $\rho(t,x)=\chi_{E(t)}$ is not typically satisfied by all solutions of  \eqref{eq:LHS}. Since the equation only imposes the constraint $\rho(t,x)\leq 1$, there are no obvious reasons why $\rho$ should be a characteristic function.
But it can sometimes be proved that phase separation is propagated by the equation: If $\rho_{in}=\chi_{E_{in}}$, then $\rho(t)=\chi_{E(t)}$ for all $t>0$.
Such a result was proved in \cite{MPQ} and \cite{KP} for a different model, involving a non-negative source term but no drift. 
The crucial ingredient in these papers was the monotonicity of $\rho$ with respect to $t$, and it was proved that $\rho(t) = \chi_{E(t)}$ a.e. $t>0$ for some open set $E(t)$ with finite perimeter. 
For the aggregation equation, a similar result was proved in \cite[Theorem 1.1 (b)]{CKY} when $G$ is the purely Newtonian attraction kernel using viscosity solution approach.
Similar results were proved in \cite{AKY}  when $\phi$ is  a fixed potential satisfying $-\Delta \phi \geq 0$ (see also \cite{BKP} where the assumption is on the source term).
This assumption, which implies some monotonicity of $\rho$ along characteristic curves, is essential in these works.
When $G$ is given by \eqref{eq:G}, the condition $-\Delta \phi \geq 0$ is no longer satisfied but the fact that this potential is always  attractive suggests that there is still some intrinsic monotonicity in the model.
Indeed, the propagation of phase separation was proved  in  \cite{KMW} for the hard-sphere PKS model (that is \eqref{eq:LHS} with $G$ given by \eqref{eq:G}). The proof is very different from those mentioned above and the result is somewhat weaker:
 it is proved that for appropriate initial data and for a.e. $t>0$ there is a set $E(t)$ such that $\rho(t)=1$ a.e. in $E(t)$ and $\rho(t)=0$ a.e. in $\R^d \setminus \overline {E(t)}$. 
The condition $\rho = \chi_{E(t)}$ would follow if one could show that $|\pa E(t)|  =0$, a property that appears difficult to get.

\medskip

Making rigorous the connection between \eqref{eq:LHS} and the common notions of weak solutions for the Hele-Shaw free boundary problem (e.g. viscosity solutions) is a delicate issue.
Besides the question of phase separation, there is the issue of giving a meaning to \eqref{eq:HS1} when the set $E(t)$ is not smooth.
As discussed in Section \ref{sec:proj}, the first condition in \eqref{eq:HS1} can be derived rigorously (it is the complementarity condition).
The fact that the Hele-Shaw velocity law is satisfied in some viscosity solutions sense was proved in \cite[Theorem 1.1]{CKY} for the Newtonian kernel.

\medskip

\part{\Large Sharp interface limits and free boundary problems}
In the second part of this paper, we review and extend recent results showing that when $m>2$ and at appropriate  scales, the competition between  local repulsion and non-local attraction as described by equation \eqref{eq:LSS}
leads to phase separation and to the formation of an interface separating regions of high and low (or $0$) density, interface whose evolution can be described by  models of geometric nature.
As noted in the introduction, phase separation plays a critical role in many biological processes and it has been shown that it  can be explained by the simple attraction-repulsion mechanisms modeled by equation \eqref{eq:AD}.  We refer for instance to \cite{TBL,BT16} (for biological aggregations such as insect swarms) and to \cite{FBC} (where phase separation is discussed for a continuum model of cell-cell adhesion).
From a mathematical view point, the main idea behind this phenomena is the close relationship between the aggregation-diffusion  equation \eqref{eq:AD} and a degenerate Cahn-Hilliard equation.
In this part of the paper, we describe an approach, based on the recent work \cite{KMW2,M23},  to rigorously investigate phase separation phenomena and derive effective free boundary  problems by considering appropriate singular limits of~\eqref{eq:AD}.

\section{A non-local Cahn-Hilliard equation}\label{sec:FBP}
Our goal is to derive effective models for aggregation-diffusion phenomena  when
the total mass  $\int \rho(t,x)\, dx$ is very large and the phenomena are observed from far away (at an appropriate time scale): We introduce $\eps\ll1$ such that $\int_{\R^d} \rho(t,x)\, dx= \eps^{-d}$ and rescale the space and time  variables as follows:
 $$ x=\eps \bar x, \qquad t=\eps^{2} \bar t .$$
The function $\rho^\eps(t,x):=  \rho(\bar t, \bar x)$ now satisfies $\int_{\R^d} \rho^\eps(t,x)\, dx=1$ and solves \eqref{eq:LSSeps} which we recall here for the reader's convenience:
$$\pa_t\rho + \div(\rho \na (G_\eps*\rho)) = \div(\rho\na f'(\rho)), \qquad G_\eps(x) = \eps^{-d}G(\eps^{-1}x)$$
(alternatively, this amounts to studying  \eqref{eq:LSS} when the range of the nonlocal attraction  is small compared to typical macroscopic length).
Throughout this section, we will assume that $f$ satisfies
\begin{equation}\label{eq:ff} f(\rho)=f_m(\rho), \quad m>2, \quad\mbox{ or } \quad f(\rho)=f_\infty(\rho).\end{equation}
This assumption ensures that the function $h(\rho)$ defined by \eqref{eq:hh} is a double-well potential (see \eqref{eq:hm} and \eqref{eq:hinfty}), which is all we really need for the results presented here to hold.

Most of the rigorous results in this direction have only been proved when $G$ is the  chemotaxis potential \eqref{eq:G} or some other very particular kernel (such as the heat kernel). 
Extending the  theory (in  particular Theorem \ref{thm:HS}) to  general interaction potentials $G$ describing local attraction is an interesting and  important open problem. For the formal discussion below, it is enough to assume that
$$
 G=G(|x|)\geq 0, \qquad   \int G(x)\, dx= \sigma^{-1} \in(0,\infty) \quad \mbox{ and  } \quad \int_{\R^d} |x|^2 G(x)\, dx <\infty.
$$
 \medskip

 A crucial observation mentioned in the introduction is the fact that  \eqref{eq:LSSeps} can be written as a non-local Cahn-Hilliard equation (with a singular double-well potential).
Indeed, since $G_\eps * \rho \to \sigma^{-1} \rho $ when $\eps\to 0$, it makes sense to rewrite \eqref{eq:LSSeps} as 
\begin{equation}\label{eq:bfkhsd}
\pa_t \rho  = \div(\rho\na [f'(\rho)-  \sigma^{-1}\rho] )- \div(\rho \na [G_\eps *\rho-\sigma^{-1} \rho]) .
\end{equation}
Introducing the symmetric nonlocal operator 
$$%\begin{equation}\label{B_epsilon} 
B_\eps[\rho] (x):= (G_\eps *\rho-\sigma^{-1} \rho)(x) = \int_{\R^d } G_\eps(x-y) [\rho(y)-\rho(x)]\, dy,
$$%\end{equation} 
and using the function $h$ defined by \eqref{eq:hh}, we can write \eqref{eq:bfkhsd} as
\begin{equation}\label{eq:NCH}
\pa_t \rho  + \div (\rho v)=0 ,\qquad 
v=- \na h'(\rho)+ \na B_\eps[\rho].
\end{equation}
This equation is the gradient flow for the energy $\J_\eps$ defined in \eqref{eq:Jeps} (which can be obtained from the energy \eqref{eq:Jeps0} by a similar computation) with respect to the 2-Wasserstein distance. Equation \eqref{eq:NCH} is closely related to a model derived in \cite{lebo97} by Giacomin and Lebowitz as a macroscopic model for phase separation in particle systems with long range interactions. This equation can be seen as a non-local approximation of the degenerate Cahn-Hilliard equation \cite{CH58,EG96}: When $\rho$ is a smooth function, we have (using the symmetry of $G$): 
%\begin{equation}\label{eq:Beps}
$$
B_\eps[\rho] = \eps^2 \beta \Delta \rho + \mathcal O(\eps^4), \qquad \beta:=\int_{\R^d} (z\cdot e)^2 G(z)\, dz
$$
%\end{equation}
(for any $e\in \mathbb S^{d-1}$) and so \eqref{eq:NCH} is close to
\begin{equation}\label{eq:CH}
\pa_t \rho  + \div (\rho v)=0 ,\qquad 
v=- \na h'(\rho)+ \eps^2 \beta \na \Delta \rho.
\end{equation}
This connection between aggregation-diffusion equations and degenerate Cahn-Hilliard equations is discussed and used for instance in \cite{BT16,CCY19,FBC,KMW2,M23}.
When $h$ is a double-well potential (as is the case when \eqref{eq:ff} holds), it is well-known that \eqref{eq:CH} leads to phase-separation and can be approximated - at appropriate time scales - by Stefan or Hele-Shaw free boundary problems.
The sharp interface limit $\eps\to0$ was first investigated via formal expansion method by Pego \cite{Pego} (for the non-degenerate equation) and by Glasner \cite{Glasner} (for the degenerate equation). 
Rigorous justification of these formal asymptotic were established by Alikakos, Bates and Chen \cite{ABC,Chen96} for the non-degenerate equation.
Formal results were also obtained for the nonlocal equation \eqref{eq:NCH} by Giacomin and Lebowitz in \cite{lebo98}. 
In this part, we present some rigorous results for such limits for the aggregation diffusion equation \eqref{eq:NCH}.

\medskip

As a final comment, we note that the convergence of the rescaled version of  \eqref{eq:NCH} 
$$
\pa_t \rho  +\div(\rho v) =0 \, , \qquad v= -\na h'(\rho) +\eps^{-2} \na  B_\eps[\rho]
$$
to the  degenerate Cahn-Hilliard equation:
$$
\pa_t \rho   +\div(\rho v) =0 \, , \qquad v= -\na  h'(\rho) + \beta \Delta \rho 
$$
was recently proved in \cite{elbar2022degenerate} on the torus and when $G_\eps$ is a compactly supported approximation of unity (see also \cite{carrillo2023degenerate} for a similar result for  systems of Cahn-Hilliard equations).

\medskip

\section{Phase separation: Stefan free boundary problems}
Going back to  \eqref{eq:NCH}, we now investigate 
the asymptotic behavior of the solutions as   $\eps\to0$.
Since   $B_\eps[\rho]$ vanishes as $\e$ vanishes, we expect the asymptotic dynamics to be close to that of the  forward-backward diffusion equation
\begin{equation}\label{eq:fb}
\pa_t \rho = \div(\rho\na {h}'(\rho))
\end{equation}
(recall that under \eqref{eq:ff}, $\rho\mapsto h(\rho)$ is not convex). 
Such equations are commonly associated with the modeling of phase separation mechanisms, but are ill-posed from a mathematical point of view (see \cite{Hollig}).
One would expect the limits of solutions of \eqref{eq:NCH} to inherit some additional properties, but characterizing these limits is quite challenging, even for the classical Cahn-Hilliard equation.
The simplest setting is when the initial data does not take values in the "mushy" region $(0,\theta)$: such initial data are usually referred to as "well-prepared".
In that case,  the limit $\eps\to0$ of \eqref{eq:NCH} leads to 
the equation 
\begin{equation}\label{eq:stefan}
\pa_t \rho = \div(\rho\na {h^{**}}'(\rho)),
\end{equation}
where $h^{**}$ denotes the convex hull of $h$. 
When $h=h_m$ in \eqref{eq:hm}, we have 
$$
h^{**}_m (\rho)= 
\begin{cases}
0 & \mbox{ for } \rho \in [0,\theta_m];\\
h_m(\rho) & \mbox{ for } \rho\geq \theta_m.
\end{cases}
$$
With this choice of $h$ \eqref{eq:stefan}  corresponds to a generalized Stefan free boundary problem, where $\rho$ is  the enthalpy variable.
This equation propagates phase separation in the sense that the   mushy region $\{\rho(t,x)\in (0,\theta)\}$ is non-increasing with respect to $t$ (see for example  \cite{gotz}). 
So when $|\{\rho(x,0)\in (0,\theta)\}|=0$, this property remains true for all $t>0$, and there exists a set $E(t)$ such that
$$ \rho >\theta \mbox{ in } E(t), \qquad \rho = 0 \mbox{ in } \R^d\setminus E(t).
$$
The evolution of the sharp interface $\pa E(t)$ is then described by the Stefan problem \eqref{eq:stefan} which is a weak formulation of \eqref{eq:stef}.

\medskip

When the initial data is not well-prepared, further characterization of  the limits for solutions of \eqref{eq:NCH} is required to understand how phase separation arises.
This has been studied for the non-degenerate Cahn-Hilliard equation.
First we note (see Figure \ref{fig:1}) that there exists $\underline \theta\in(0,\theta)$
such that $h''>0$ in $(\underline \theta,\infty)$ and $h''<0$ in $(0,\underline \theta)$.
When $\rho_{in}$ does not take values in the unstable region $(0,\underline \theta)$, Plotnikov \cite{plot99} characterized the limits in dimension $1$ and showed that the limiting problem may exhibit  hysteresis phenomena. 
In general the  problem is very unstable and numerical computations show that oscillations appear in the region where $\rho_{in}(x)\in (0,\underline \theta)$
(see  \cite{elliott,BFG06}). Passing to the limit  $\eps\to 0$ requires a  weak formulation of   \eqref{eq:fb} involving Young's measures (see for example Plotnikov \cite{plot97} and  Slemrod \cite{slem91}).
We will not review these results here, but we stress the fact that the dynamics of this phase separation process in the set $\{\rho_{in} \in(0,\theta) \}$ is very rich, even in dimension $1$
(see \cite{BFG06}). We are not aware of any similar analysis for the degenerate Cahn-Hilliard equation or for its non-local approximation \eqref{eq:NCH}.

\medskip

Even with well-prepared initial data, 
justifying the convergence to the Stefan problem \eqref{eq:stefan} is delicate. 
A similar limit has been studied for the degenerate Cahn-Hilliard equation \eqref{eq:CH}.
The convergence of \eqref{eq:CH} to \eqref{eq:stefan}, studied formally in \cite{Pego}, was established rigorously only in dimension $1$ by Delgadino  in \cite{Delgadino}, for  well-prepared initial data. Delgadino relied on the approach developed by Sandier and Serfaty \cite{serfaty} to prove the convergence of gradient flows
using the fact that the Allen-Cahn energy \eqref{eq:ACdf} $\Gamma$-converges to $ \int_{\R^d} h^{**}(\rho(x))\, dx$.
Convergence to a Stefan free boundary problem was also proved by a similar approach for the  1-dimensional non-degenerate Cahn-Hilliard equation in  \cite{Bellettini}.
But as far as we know no such results are known for our non-local equation \eqref{eq:NCH}
except for some formal results for a nonlocal equation similar to \eqref{eq:NCH} in \cite{lebo98}
(via formal expansion and matched asymptotic analysis).
Extending the rigorous analysis of \cite{Delgadino} to this equation is an interesting open problem.
We note that such a result  appears plausible at the level of the energy: indeed 
  a corresponding  $\Gamma$-convergence result can be obtained for the  energy $\J_\eps$ given by \eqref{eq:Jeps}.
\begin{proposition}\label{prop:Gamma1}
Assume that $G \in L^1(\R^d)$ and that $\int_{\R^d} |z|^2 G(z) \,dz <\infty$. Then
$ \J_\eps$ (defined by \eqref{eq:Jeps}) $\Gamma$-converges to $\J^*$ (defined by \eqref{eq:J*}). More precisely, we have
\item[(i)] For any sequence $\rho^\eps \in \mathcal P_2(\R^d)$ that converges weakly to $\rho \in \mathcal P_2(\R^d) $, we have
$$ \liminf_{\eps\to 0} \J_\eps(\rho^\eps) \geq \J^*(\rho)$$
\item[(ii)] For any $\rho  \in \mathcal P_2(\R^d) $, there exists a sequence $\rho^\eps \in \mathcal P_2(\R^d)$ such that $\rho_\eps$ converges weakly to $\rho $ and 
$$ \limsup_{\eps\to 0} \J_\eps(\rho^\eps) \leq \J^*(\rho).$$
\end{proposition}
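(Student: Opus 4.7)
The proof separates into the liminf bound (i) and the construction of a recovery sequence (ii). Both rest on three features of the setup: the nonlocal double integral in $\J_\eps$ is non-negative; the convex envelope satisfies $h^{**}\leq h$ pointwise, with equality outside the interval where $h$ is non-convex; and, by the choice of the constant $a$ in $h$, both $h$ and $h^{**}$ are non-negative with $h(0)=h(\theta)=0$ for $\theta=\theta_m$ in the $f_m$ case and $\theta=1$ in the $f_\infty$ case.

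For (i), I may assume $\limsup_\eps \J_\eps[\rho^\eps]<\infty$. Dropping the non-negative double integral and using $h\geq h^{**}$ yields $\J_\eps[\rho^\eps]\geq \int h^{**}(\rho^\eps)\,dx$. Since narrow convergence in $\mathcal P_2$ does not rule out concentration, I would regularize at a fixed scale by a standard mollifier $\eta_\delta$ and exploit Jensen's inequality for the convex $h^{**}$ against the probability kernel $\eta_\delta(x-\cdot)$:
\[
\int h^{**}(\rho^\eps)\,dx \;\geq\; \int h^{**}(\rho^\eps*\eta_\delta)\,dx.
\]
For fixed $\delta$, narrow convergence gives $(\rho^\eps*\eta_\delta)(x)\to (\rho*\eta_\delta)(x)$ pointwise with uniform $L^\infty$ bound $\|\eta_\delta\|_\infty$, so Fatou's lemma produces $\liminf_\eps \int h^{**}(\rho^\eps*\eta_\delta)\,dx \geq \int h^{**}(\rho*\eta_\delta)\,dx$. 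Sending $\delta\to 0$, I would then use Fatou once more (when $\rho$ is absolutely continuous) or the coercivity of $h^{**}$ at $+\infty$ to handle the case when $\rho$ has a singular part (so $\rho*\eta_\delta$ develops unbounded values and both sides of the target inequality become $+\infty$), which closes the chain.

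For (ii), I may assume $\J^*[\rho]<\infty$, which forces $\rho$ to be absolutely continuous. I plan a cube-wise construction at a mesoscopic scale $\lambda_\eps$ with $\eps\ll\lambda_\eps\ll 1$: partition $\R^d$ into disjoint cubes $\{Q_j\}$ of side $\lambda_\eps$, set $\bar\rho_j:=\fint_{Q_j}\rho$, and on each $Q_j$ define $\rho^\eps$ by placing the two-valued profile $\theta\,\chi_{Q_j'}$ on a sub-cube $Q_j'\subset Q_j$ of volume fraction $\bar\rho_j/\theta$ (when $\bar\rho_j\leq\theta$), or simply setting $\rho^\eps\equiv\bar\rho_j$ on $Q_j$ (when $\bar\rho_j>\theta$). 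Mass is conserved cube by cube and the second moment is preserved asymptotically, so $\rho^\eps\in \mathcal P_2(\R^d)$ and $\rho^\eps\to\rho$ narrowly. Because $h(0)=h(\theta)=0$ and $h=h^{**}$ on $[\theta,\infty)$, the local term satisfies $\int h(\rho^\eps)\,dx=\int h^{**}(\bar\rho)\,dx$, which tends to $\int h^{**}(\rho)\,dx$ by continuity of $h^{**}$ on bounded sets and dominated convergence. The nonlocal term, after the substitution $y=x+\eps z$, equals $\tfrac{1}{4}\int G(z)\int[\rho^\eps(x)-\rho^\eps(x+\eps z)]^2\,dx\,dz$; since $\rho^\eps$ is bounded and piecewise constant, the inner integral is supported in an $\eps|z|$-neighborhood of the interface set of $\rho^\eps$, whose $(d-1)$-measure per unit volume is of order $\lambda_\eps^{-1}$. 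The total contribution is $O\bigl((\eps/\lambda_\eps)\int|z|G(z)\,dz\bigr)\to 0$ once $\eps\ll\lambda_\eps$, with $\int|z|G(z)\,dz<\infty$ following from Cauchy--Schwarz applied to $G\in L^1$ and $\int |z|^2 G\,dz<\infty$.

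The main technical obstacle, in my view, is coordinating the auxiliary scales: the mollification scale $\delta$ in (i) must be tuned so that concentration in the narrow limit is detected by $h^{**}$, and the mesoscopic scale $\lambda_\eps$ in (ii) must balance the approximation error in the local term against the $\eps/\lambda_\eps$ surface contribution in the nonlocal term. It is also worth noting that the nonlocal double integral does not appear explicitly in any of the inequalities of the liminf argument, but its boundedness is implicitly responsible, via the coercivity of $\J_\eps$, for ruling out singular narrow limits whenever the energy stays bounded.
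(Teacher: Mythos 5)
Your proof is correct, and while part (i) follows the paper's route, part (ii) is genuinely different. For the liminf, the paper's entire argument is the one-liner you start from: drop the non-negative double integral, use $h\geq h^{**}$, and invoke weak lower semicontinuity of the convex functional $\J^*$; your mollification--Jensen--Fatou chain is just an explicit proof of that semicontinuity (and your closing remark correctly identifies that superlinearity of $h^{**}$ is what excludes singular narrow limits at finite energy). For the limsup, the paper instead bounds the nonlocal term by $\eps^2 D\int|\na\rho|^2$ via the fundamental theorem of calculus and $\int|z|^2G\,dz<\infty$, so that $\J_\eps$ is dominated by an Allen--Cahn functional whose $\Gamma$-convergence to $\int h^{**}$ is quoted from the literature; any recovery sequence for that functional then serves for $\J_\eps$. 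Your mesoscopic cube-wise oscillation between the wells $0$ and $\theta$ at scale $\lambda_\eps$ with $\eps\ll\lambda_\eps\ll1$ is a self-contained alternative that buys three things: mass is conserved cube by cube (so membership in $\P_2$ and narrow convergence are immediate, whereas the cited Allen--Cahn recovery sequences are built for $L^1$ convergence and need to be checked against the mass constraint), the construction only ever evaluates $h$ at $0$, $\theta$, or on $[\theta,\infty)$, so the singular wells ($h_\infty=+\infty$ above $1$) cause no trouble, and the nonlocal term is killed by the elementary surface-area count $O(\eps/\lambda_\eps)$ rather than by a gradient bound. The only places where your write-up is slightly terse are harmless: when $\rho$ is unbounded the near-jump contribution should be controlled by $\sum_j\bar\rho_j^2\min(\lambda_\eps^{d-1}\eps|z|,|Q_j|)\lesssim\min(\eps|z|/\lambda_\eps,1)\|\rho\|_{L^2}^2$ and dominated convergence in $z$ (which works since $\rho\in L^1\cap L^m$ with $m>2$), and the local term is handled most cleanly by Jensen, $\int h^{**}(\bar\rho)\leq\int h^{**}(\rho)$, which is all a limsup inequality requires.
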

 We provide a short proof of this result in Appendix~\ref{app:Gamma}.

\medskip

While we will not attempt to extend the  analysis of  \cite{Delgadino} to our  nonlocal equation \eqref{eq:NCH}, we can prove the following conditional result:
\begin{theorem}\label{thm:stefan}
Assume that \eqref{eq:ff} holds and that $G$ is given by \eqref{eq:G}.
Let $\rho_{in}^\eps$ be a sequence of initial data in $\mathcal P_2(\R^d)$ satisfying $\J_\eps(\rho_{in}^\eps) \leq C$ and let $\rho^\eps(t,x)$ be the corresponding solution of \eqref{eq:LSSeps} up to time $T>0$. 
There exists a subsequence $\eps_n$ such that $\rho^{\eps_n}(t)$ converges  narrowly (locally uniformly in $t$) to $\rho\in AC([0,T];\mathcal P_2(\R^d))$.
Furthermore if we have 
\begin{equation}\label{eq:EAS}
\lim \int_0^T \J_{\eps_n}(\rho^{\eps_n}(t))\, dt \to \int_0^T \J^* (\rho(t))\, dt,
\end{equation}
then $\rho$ solves
\eqref{eq:stefan} on $[0,T]\times\R^d$
and  $\rho^{\eps_n}$ converges strongly (in $L^2$) to $\rho$ in $\{ h^{**}(\rho)>0\}$.
\end{theorem}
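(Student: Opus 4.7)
The plan is to proceed in four conceptual steps: (i) extract a narrow, Wasserstein-convergent subsequence from the energy-dissipation inequality; (ii) use Proposition~\ref{prop:Gamma1} together with \eqref{eq:EAS} to separate the local and nonlocal pieces of $\J_\eps$ in the limit; (iii) perform a Young-measure analysis to locate the support of the limiting Young measure and to obtain strong $L^2$-convergence on $\{h^{**}(\rho)>0\}$; (iv) pass to the limit in the weak form of the equation, exploiting the chemotaxis-specific elliptic identity for $\psi_\eps:=G_\eps\ast\rho^\eps$ to handle the drift term. For step (i), the energy-dissipation inequality gives $\J_\eps(\rho^\eps(t))+\int_0^t\!\int |v^\eps|^2 d\rho^\eps\,ds\le C$ with $v^\eps=-\na h'(\rho^\eps)+\na B_\eps[\rho^\eps]$ and $B_\eps[\rho]:=G_\eps\ast\rho-\sigma^{-1}\rho$; combining the Wasserstein bound $d_W(\rho^\eps(t),\rho^\eps(s))\le(t-s)^{1/2}(\int_s^t\!\int|v^\eps|^2 d\rho^\eps)^{1/2}$ with propagation of the second moment as in \eqref{eq:second} gives equicontinuity and tightness in $(\P_2,d_W)$, and Arzelà--Ascoli extracts $\rho^{\eps_n}\to\rho$ narrowly, uniformly in $t$, with $\rho\in AC([0,T];\P_2(\R^d))$. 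For step (ii), the $\Gamma$-liminf combined with \eqref{eq:EAS} and Fatou force $\J_{\eps_n}(\rho^{\eps_n}(t))\to\J^*(\rho(t))$ for a.e.~$t$; since $\J^*=\int h^{**}(\rho)\,dx$ is purely local and both pieces of $\J_\eps$ are nonnegative, this decouples into $\int h(\rho^{\eps_n}(t))\,dx\to\int h^{**}(\rho(t))\,dx$ and $\tfrac{1}{4}\iint G_{\eps_n}(x{-}y)[\rho^{\eps_n}(x){-}\rho^{\eps_n}(y)]^2\,dx\,dy\to 0$, a.e.~$t$. Cauchy--Schwarz then yields $B_{\eps_n}[\rho^{\eps_n}]\to 0$ in $L^2([0,T]\times\R^d)$, and using the elliptic identity $\rho^\eps=\sigma\psi_\eps-\eps^2\eta\Delta\psi_\eps$ one checks $\eps^2\!\int|\na\psi_\eps|^2\,dx$ is controlled by the nonlocal part of $\J_\eps$ and hence also vanishes in the limit.

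For step (iii), super-linear growth of $h$ (or the $L^\infty$ bound in the $f_\infty$ case) combined with $\int_0^T\!\!\int h(\rho^{\eps_n})\le CT$ gives equi-integrability of $\rho^{\eps_n}$ on $[0,T]\times\R^d$. Let $(\nu_{t,x})$ be the associated Young measure with $\rho(t,x)=\int\lambda\,d\nu_{t,x}(\lambda)$. The weak convergence $h(\rho^{\eps_n})\rightharpoonup \int h(\lambda)\,d\nu_{t,x}$ in $L^1$, combined with step (ii), Jensen's inequality, and $h\ge h^{**}$, yields the pointwise a.e.~chain
\begin{equation*}
\int h(\lambda)\,d\nu_{t,x}(\lambda) \;\ge\; \int h^{**}(\lambda)\,d\nu_{t,x}(\lambda) \;\ge\; h^{**}(\rho(t,x)),
\end{equation*}
whose $(t,x)$-integrals all equal $\iint h^{**}(\rho)\,dx\,dt$. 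Both inequalities must therefore be equalities a.e.: the first forces $\supp\nu_{t,x}\subset\{h=h^{**}\}=\{0\}\cup[\theta,\infty)$, while strict convexity of $h^{**}$ on $[\theta,\infty)$ plus Jensen saturation forces $\nu_{t,x}=\delta_{\rho(t,x)}$ wherever $\rho(t,x)>\theta$; on $\{h^{**}(\rho)>0\}$ this gives convergence in measure, upgraded to $L^2$-convergence by the $L^m$ bound. In the mushy region $\{0<\rho<\theta\}$, using $h(0)=h(\theta)=0$, Jensen saturation then pins down $\nu_{t,x}=(1-\rho/\theta)\delta_0+(\rho/\theta)\delta_\theta$, as this is the unique probability measure supported in $\{0\}\cup[\theta,\infty)$ with barycenter $\rho$ and $\int h\,d\nu_{t,x}=0$.

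For step (iv), testing \eqref{eq:LSSeps} against $\vphi\in C_c^\infty([0,T)\times\R^d)$ and integrating by parts yields
\begin{equation*}
\int_0^T\!\!\int \rho^\eps\pa_t\vphi\,dx\,dt + \int \rho_{in}^\eps\vphi(0)\,dx + \int_0^T\!\!\int \rho^\eps\na\psi_\eps\cdot\na\vphi\,dx\,dt + \int_0^T\!\!\int P(\rho^\eps)\Delta\vphi\,dx\,dt = 0,
\end{equation*}
with $P(\rho):=\rho f'(\rho)-f(\rho)$. Substituting $\rho^\eps=\sigma\psi_\eps-\eps^2\eta\Delta\psi_\eps$ in the drift term and integrating by parts gives
\begin{equation*}
\int \rho^\eps\na\psi_\eps\cdot\na\vphi\,dx = -\tfrac{\sigma}{2}\int\psi_\eps^2\,\Delta\vphi\,dx + O\bigl(\eps^2\!\int|\na\psi_\eps|^2\,dx\bigr),
\end{equation*}
with the error vanishing by step (ii). Using $\psi_{\eps_n}\to\sigma^{-1}\rho$ in $L^2$ and the Young-measure representations $(\rho^{\eps_n})^2\rightharpoonup\int\lambda^2\,d\nu_{t,x}$ and $P(\rho^{\eps_n})\rightharpoonup\int P(\lambda)\,d\nu_{t,x}$ in $L^1$, the drift and pressure terms combine in the limit to $\iint\bigl[\int\Pi(\lambda)\,d\nu_{t,x}(\lambda)\bigr]\Delta\vphi\,dx\,dt$ with $\Pi(\rho):=P(\rho)-\rho^2/(2\sigma)=\rho h'(\rho)-h(\rho)$. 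The support structure from step (iii) gives the key identity $\int\Pi(\lambda)\,d\nu_{t,x}(\lambda)=\Pi^{**}(\rho(t,x))$ a.e.: on $\{0\}\cup[\theta,\infty)$ one has $\Pi=\Pi^{**}$ with $\Pi(0)=\Pi(\theta)=0$ (as $\theta$ is a double zero of $h$), so the explicit form of $\nu_{t,x}$ in the mushy region gives $\int\Pi\,d\nu_{t,x}=0=\Pi^{**}(\rho)$, while on $\{\rho>\theta\}$ it gives $\int\Pi\,d\nu_{t,x}=\Pi(\rho)=\Pi^{**}(\rho)$. This yields the weak form of \eqref{eq:stefan}. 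The main obstacle is step (iii): since $\Pi$ is not itself convex, the identification $\Pi(\rho^{\eps_n})\rightharpoonup\Pi^{**}(\rho)$ is not automatic and depends on the full support structure of $\nu_{t,x}$, which relies on the sharp form of \eqref{eq:EAS} being used twice—once to concentrate $\nu_{t,x}$ on $\{h=h^{**}\}$, and again through Jensen saturation for $h^{**}$ to resolve the mushy region as a two-point oscillation between the endpoints $\{0,\theta\}$.
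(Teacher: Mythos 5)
Your proposal is correct in substance but takes a genuinely different route from the paper at the crucial identification step. The paper works with the momentum formulation $\int \rho^\eps v^\eps\cdot\xi = \int[\rho^\eps h'(\rho^\eps)-h(\rho^\eps)]\div\xi - \int\rho^\eps\na B_\eps[\rho^\eps]\cdot\xi$ and avoids Young measures entirely: on $E=\{h^{**}(\rho)=0\}$ it uses the elementary bound $|\rho h'(\rho)-h(\rho)|\le \delta + C_\delta h(\rho)$ together with $h(\rho^{\eps_n})\to 0$ there, and on $F=\{h^{**}(\rho)>0\}$ it introduces the truncation $w^\eps=\max\{\rho^\eps,\theta\}$ and exploits strict convexity of $h^{**}$ on $(\theta,\infty)$ to get strong $L^1$ and a.e.\ convergence of $\rho^{\eps_n}$ to $\rho$ on $F$. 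Your Young-measure argument reaches the same two conclusions (Dirac measure on $\{h^{**}(\rho)>0\}$, hence strong convergence there; vanishing of $\rho h'-h$ against the limit measure on the mushy region) and in addition resolves the oscillation structure in $\{0<\rho<\theta\}$ as a two-point measure on $\{0,\theta\}$ — more information than the paper extracts, at the cost of invoking the Young-measure machinery. Your treatment of the nonlocal drift (substituting $\rho^\eps=\sigma\psi_\eps-\eta\eps^2\Delta\psi_\eps$, integrating by parts, and controlling the error by $\eps^2\int|\na\psi_\eps|^2$, which is dominated by the vanishing nonlocal part of $\J_\eps$) is the same elliptic-identity computation as the paper's proof of its Lemma, merely organized around scalar test functions and $\Delta\vphi$ rather than vector fields $\xi$; likewise your Cauchy--Schwarz bound $\|B_\eps[\rho]\|_{L^2}^2\le\sigma^{-1}\iint G_\eps(x-y)[\rho(x)-\rho(y)]^2$ is correct. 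Two cautions. First, the representations $h(\rho^{\eps_n})\rightharpoonup\int h\,d\nu_{t,x}$ and $P(\rho^{\eps_n})\rightharpoonup\int P\,d\nu_{t,x}$ in $L^1$ require equi-integrability of the composed sequences, not just of $\rho^{\eps_n}$; this does hold, but only because \eqref{eq:EAS} rules out concentration ($\lim\int h(\rho^{\eps_n})=\iint\int h\,d\nu$ forces equi-integrability of $h(\rho^{\eps_n})$ by the biting lemma, and $P\lesssim 1+h$), so you are implicitly using the energy convergence a third time and should say so. Second, your identity $\Pi(\theta)=0$ rests on $\theta$ being a double zero of $h$, which is true for $f_m$ with $m>2$ but fails for $h_\infty$ (where $h_\infty'(1^-)=-\tfrac{1}{2\sigma}$); the paper's own proof has the identical lacuna in the hard-sphere case, so this is not a gap relative to the paper, but it is worth flagging if you intend your argument to cover $f=f_\infty$.
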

 As mentioned in the introduction,  the convergence assumption \eqref{eq:EAS} offers a classical approach to (conditional) convergence results for free boundary problems (see also the next section). 
We recall that the $\Gamma$-convergence of $\J_\eps$ implies the liminf inequality, 
so  \eqref{eq:EAS} says that there is no loss of energy in the limit. 
Theorem \ref{thm:stefan} appears to be new and we provide a proof in Appendix \ref{app:Gamma}.
While we do not explicitly require the initial data to be well-prepared in this result, assumption \eqref{eq:EAS} is in fact likely stronger:
In \cite{Delgadino} it is proved (in dimension $1$ and for the Cahn-Hilliard equation) that condition \eqref{eq:EAS} is  satisfied when the convergence of the energy holds for the initial data, that is, $\J_\eps(\rho_{in}^\eps) \to \J^*(\rho_{in})$.

\section{Surface tension phenomena: Hele-Shaw flows}\label{sec:st}
When $t\to \infty$, solutions of the Stefan problem \eqref{eq:stefan} converge to functions of the form $\theta \chi_{E_\infty}$, which are steady states of the Stefan problem (for any set $E_\infty$). Going back to our aggregation-diffusion equation \eqref{eq:LSSeps} (or \eqref{eq:NCH}), this means that solutions experience phase separation.
However, these simple characteristic functions are not steady states of  \eqref{eq:LSSeps}  and will thus continue to evolve at a much slower time scale. 
In order to characterize this evolution,  we now take an initial condition of the form
$$\rho_{in}=\theta \chi_{E_{in}} $$
in \eqref{eq:NCH}
and rescale the time variable 
$t\mapsto \eps^{-1} t$, leading to equation \eqref{eq:LSSeps2}, which we rewrite as
\begin{equation}\label{eq:NCH2}
\eps \pa_t \rho  = \div(\rho\na h'(\rho) )-  \div(\rho \na B_\eps[\rho]).
\end{equation}
In this section, we describe how the limit $\eps\to0$ gives rise to surface tension (mean-curvature) effects. In the next section~\ref{sec:ca}, we will show that when the equation is set on a bounded domain this limit also yields contact angle conditions.
This is not surprising, since for the corresponding rescaled Cahn-Hilliard equation: 
$$ \eps \pa_t \rho  = \div(\rho\na [ h'(\rho) - \eps^2 \Delta \rho ])$$
it was formally shown by Glasner \cite{Glasner} (using formal expansion and matched asymptotics) that the limit $\eps \to 0$ leads to a one-phase Hele-Shaw free boundary problem with surface tension.
We can also make sense of this limit by looking at the associated energy functional: Since
$h(\rho_{in})=0$, \eqref{eq:Jeps} reduces to
\begin{align*}
\J_\eps [\rho_{in}] & =  \frac{\theta^2}{4} \iint_{\R^d\times \R^d }  G_\eps(x-y)[\chi_{E_{in}}(x)-\chi_{E_{in}}(y)]^2\, dx\,dy\\
& =   \frac{\theta^2}{4} \iint_{\R^d\times \R^d }  G_\eps(x-y)|\chi_{E_{in}}(x)-\chi_{E_{in}}(y)|\, dx\,dy \\
& = \eps \gamma_0 P(E_{in}) + o(1)
\end{align*}
for some $\gamma_0>0$ (see for instance  \cite{BBM,MW}) where $P(E)$ denotes the perimeter of $E$, defined by \eqref{eq:Per}.
We thus introduce  the rescaled energy $ \G_\eps[\rho] := \eps^{-1} \J_\eps[\rho]$, that is
\begin{equation}\label{eq:Geps}
\G_\eps[\rho] 
=
\begin{cases}
\displaystyle \frac 1\eps \int_{\R^d} h(\rho) \, dx +\frac{1}{4\eps} \iint_{\R^d\times \R^d }  G_\eps(x-y)[\rho(x)-\rho(y)]^2\, dx\,dy
& \ds \text{ if }  \rho\in\P_2(\R^d), \; d\rho \ll \, dx\\
\infty & \mbox{ otherwise.}
\end{cases}
\end{equation}
which is a nonlocal approximation of the rescaled Allen-Cahn functional 
$$ \frac 1\eps \int_{\R^d} h(\rho) \, dx + \frac{\eps}{2}\int_{\R^d} |\na \rho|^2\, dx$$
whose 
  $\Gamma$-convergence  toward the perimeter functional is a classical result of  Modica-Mortola 
 \cite{MM77} (see also Modica \cite{M87} and Sternberg \cite{S88}).
 A corresponding  result for $\G_\eps$ was proved for rather general kernels $G$ when $h$ is a smooth double-well functional in \cite{AB98}. We recall that in our framework $h$ is not a smooth potential at $0$ (since $h(\rho)=-\infty$ when $\rho<0$ and $h(\rho)=+\infty$ when $\rho>1$ in the hard-sphere case $m=\infty$). Nevertheless, the proof can be adapted to this case.
When $G$ is given by \eqref{eq:G}, we have the following result (see \cite{MW} for the case $m=+\infty$ and \cite{M23} for the case $m\in(2,\infty)$):
\begin{theorem}\label{thm:Gamma}
Assume that $G$ is given by \eqref{eq:G} and that $f$ satisfies \eqref{eq:ff}.
Then  the energy functional  $ \G_\eps$ defined by \eqref{eq:Geps} $\Gamma$-converges to $\G_0$ defined by
$$
\G_0(\rho)= \begin{cases}
\displaystyle \gamma \int_{\R^d} |\na \rho|   & \mbox{ if } \rho\in \P_2(\R^d) , \quad \rho \in \BV(\R^d;\{0,\theta\})  \\
\infty & \mbox{ otherwise}
\end{cases}
$$
for some constant  $\gamma$ defined by \eqref{eq:sigma} below.
More precisely, we have 
\item[(i)] {\bf liminf property}: For all sequences $\rho^\eps \in L^1(\Omega)$ such that $\rho^\eps \to \rho$ in $L^1(\Omega)$, we have
$$%\begin{equation}\label{eq:liminf}
\liminf_{\eps\to 0 }\G _\eps(\rho^\eps) \geq \G_0(\rho).
$$%\end{equation}
\item[(ii)] {\bf limsup property}: For all $\rho \in L^1(\Omega)$, there exists a sequence $\rho^\eps\in L^1(\Omega)$ such that $\rho^\eps\to\rho$ in  $L^1(\Omega)$ and 
$$\limsup_{\eps\to 0}  \G_\eps(\rho^\eps) \leq \G_0(\rho).$$
\end{theorem}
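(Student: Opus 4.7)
The plan is to adapt the classical Alberti–Bellettini strategy for $\Gamma$-convergence of nonlocal Modica–Mortola functionals, accounting for the non-smoothness of the double-well potentials $h_m$ (singular derivative at $\rho=0$ when $m>2$) and $h_\infty$ (obstacle at $\rho=1$). First I would rewrite the interaction term via the change of variables $y = x + \eps z$ as
\[
\frac{1}{4\eps}\iint G_\eps(x-y)[\rho(x)-\rho(y)]^2\,dx\,dy = \frac{1}{4}\int_{\R^d} G(z) \left( \int_{\R^d} \frac{|\rho(x+\eps z)-\rho(x)|^2}{\eps}\,dx \right) dz,
\]
so the kernel $G$ enters only through an integral weighted by $G \in L^1(\R^d)$ with finite second moment. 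This is the form on which all subsequent analysis rests.

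For the liminf inequality (i), I would slice $\R^d$ into lines parallel to $z/|z|$ for each fixed $z$ and reduce to a family of one-dimensional problems. Each inner slice contributes
\[
\int_\R \frac{|u(s+\eps|z|)-u(s)|^2}{\eps}\,ds,
\]
where $u$ is the restriction of $\rho^\eps$ to the slice. Pairing this with the corresponding fraction of the $h$-term and applying a pointwise AM–GM inequality (the nonlocal analogue of the Modica–Mortola inequality $\eps|\na u|^2 + \eps^{-1}h(u) \geq 2\sqrt{h(u)}|\na u|$) gives a lower bound proportional to the number of $0 \leftrightarrow \theta$ transitions along the slice. Integrating over slices and directions with Fatou's lemma, and invoking the coarea-type identity $\int |\na \rho| = \theta\, P(E)$ for $\rho = \theta\chi_E$, yields the lower bound $\gamma P(E)$, with $\gamma$ expressible as a 1D optimal-profile minimum integrated against $G$.

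For the limsup inequality (ii), I would first handle $E$ with smooth boundary by constructing the recovery sequence $\rho^\eps(x) = q(d_E(x)/\eps)$, where $d_E$ is the signed distance to $\pa E$ and $q:\R\to[0,\theta]$ is the optimal 1D profile realizing equality in the AM–GM inequality above. Using $|\na d_E| = 1$ in a tubular neighborhood of $\pa E$, together with the coarea formula, a direct computation shows $\G_\eps(\rho^\eps) \to \gamma P(E)$. Extension to general sets of finite perimeter then follows by approximating $\BV$ functions by smooth sets with convergent perimeter, combined with a diagonal argument.

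The main obstacle will be the non-smoothness of $h$. For $h_\infty$, the hard constraint $\rho \leq 1$ turns the 1D optimal-profile problem into a constrained variational problem, and one must verify that $q$ can be chosen to remain strictly below the obstacle except possibly at infinity. For $h_m$ with $m>2$, the singularity $h_m'(0) = -\infty$ together with the degenerate mobility requires delicate estimates near $\rho = 0$ both to ensure the AM–GM bound remains integrable and to construct $q$. In both cases, the surface-tension constant $\gamma$ must be identified via the 1D optimal-profile problem and shown to be positive and finite; this identification (together with verifying that the recovery sequence is admissible in $\P_2(\R^d)$) is the technical core of \cite{MW} in the hard-sphere case and \cite{M23} in the soft-sphere case.
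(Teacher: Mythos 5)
Your proposal follows the Alberti--Bellettini slicing strategy, which is a genuinely different route from the one the paper relies on. The proofs in \cite{MW,M23} that the theorem cites do not slice the nonlocal interaction; they exploit the fact that $G$ in \eqref{eq:G} is the Green's function of $\sigma-\eta\Delta$, so that $\phi_\eps=G_\eps*\rho$ solves $\sigma\phi_\eps-\eps^2\eta\Delta\phi_\eps=\rho$ and $\G_\eps$ can be rewritten \emph{exactly} as the local functional \eqref{eq:Jepschemo}, hence bounded below by the genuine Modica--Mortola functional \eqref{eq:Jepschemo2} in the variable $\phi_\eps$ with effective double-well $g$. This reformulation is precisely what makes a pointwise AM--GM inequality available (applied to $\frac1\eps g(\phi_\eps)+\frac\eps2|\na\phi_\eps|^2$, not to the nonlocal term in $\rho$), and it is also what produces the explicit constant \eqref{eq:sigma} as $\frac1\theta\int_0^{\theta/\sigma}\sqrt{2g}$. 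The trade-off is clear: your route, if completed, would work for rather general kernels (as in \cite{AB98}), whereas the paper's route is tied to the elliptic structure of \eqref{eq:G} but yields the closed-form constant and, crucially, the convergence of first variations (Proposition \ref{prop:firstvar}) needed downstream.

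Two points in your sketch are genuine gaps rather than omitted routine work. First, there is no pointwise ``nonlocal AM--GM'' on the sliced one-dimensional functionals that produces the sharp surface tension: the Alberti--Bellettini lower bound is obtained through a compactness/blow-up argument and an asymptotic cell formula, not an algebraic inequality, so the liminf step as you describe it would at best give a non-optimal constant and would not prove the stated $\Gamma$-limit. Second, even granting the full \cite{AB98} machinery, the constant it delivers is the value of a one-dimensional \emph{nonlocal} optimal-profile problem; the theorem asserts the specific value \eqref{eq:sigma}, and identifying the two requires using the equation satisfied by $G$ -- this identification is nowhere addressed in your plan and does not follow from the slicing argument alone. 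A smaller correction: for $h_m$ in \eqref{eq:hm} one has $h_m'(0^+)=\frac{m-2}{m-1}\theta_m^{m-1}>0$, not $-\infty$; the well at $0$ is singular only in the sense that the potential is $+\infty$ for $\rho<0$ (and for $\rho>1$ when $m=\infty$). This actually helps rather than hurts: the optimal profile reaches the well at $0$ in finite length, so the integrability worry you raise near $\rho=0$ does not arise.
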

The definition of the constant $\gamma$ appearing in the definition of $\G_0$ requires the introduction of the function
$$
 g(s) : = \inf_{\rho\geq 0}  \left[ h(\rho) +  \frac {1}{ 2\sigma} (\rho-\sigma s)^2\right].
$$
We note that $g(s)\geq 0$ for all $s\geq 0$ and $g(0)=g(\theta/\sigma) =0$ (in fact $g$ is a double-well potential).
The constant $\gamma$  is then given by
\footnote{When $h=h_\infty$, we have $\theta=1$ and $g(s)=\frac \sigma 2 \min\{s^2,(s-\sigma^{-1})^2\}$ which leads to $\gamma = \frac{1}{4\sigma^{3/2}}$.}
\begin{equation}\label{eq:sigma}
\gamma: =\frac 1 \theta \int_0^{\theta/\sigma } \sqrt{2 g(s)}\, ds.
\end{equation}

\medskip

A key tool in the proof of this theorem in \cite{MW,M23} is the following alternative formula for $\G_\eps$ (with $\phi_\eps = G_\eps*\rho$):
\begin{align}
\G_\eps(\rho) & = \frac 1 \eps \int_{\R^d} h(\rho)  + \frac 1 {2\sigma}  (\rho-\sigma \phi ^\eps)^2\, dx +\eps  \int_{\R^d} \frac 1  2 |\na \phi_\eps |^2\, dx \label{eq:Jepschemo} \\
& \geq \frac 1 \eps \int_{\R^d} g(\phi_\eps )   dx +\eps  \int_{\R^d} \frac 1  2 |\na \phi_\eps |^2\, dx \label{eq:Jepschemo2}.
\end{align}
Inequality \eqref{eq:Jepschemo2} shows a stronger connection between $\G_\eps$ and the Modica-Mortola functional with double-well potential $g$ (which explains the role of the function $g$ in determining the constant $\gamma$ in $\G_0$).
Formula \eqref{eq:Jepschemo} is the key to proving another important property of this energy: The convergence of the energy implies the convergence of the corresponding first variation.
Such results have a long history:
Reshetnyak \cite{Reshetnyak} proved that if  $\chi_{E_\eps}$ converges to $\chi_E$ strongly in $L^1$, then the convergence of the perimeter 
$ P(E_\eps)\to P(E)$
implies the convergence of its first variation
$$ \delta P(E_\eps)[\xi]:=\int ( \div \xi - \nu_\eps\otimes\nu_\eps : D\xi)\, |D\chi_{E_\eps}|  \longrightarrow \int(\div \xi - \nu \otimes\nu:D\xi) |D\chi_E| \qquad \forall \xi\in C^1_c(\R^d;\R^d)$$
where $\nu$ is the density $\frac {D\chi_E}{|D\chi_E|}$ (which exists by Radon-Nikodym's differentiation theorem) and can be interpreted as the normal vector to $\pa E$.
We recall that the first variation of $P$ is the mean-curvature. Indeed, for a smooth interface $\pa E$ we have
\begin{equation}\label{eq:mc}
 \int(\div \xi - \nu \otimes\nu:D\xi) |D\chi_E| = \int_{\pa E} \div \xi - \nu\otimes\nu : D\xi\,   d\H^{d-1}  = \int_{\pa E} \kappa\, \xi\cdot\nu\, d\H^{d-1} 
\end{equation}
where   $\kappa$ denotes the mean curvature of $\pa E$.
A similar result was proved by Luckhaus and Modica \cite{LM} for the Ginzburg-Landau functional 
$$ \int_\Omega  \frac 1 \eps (1-\rho^2)^2 + \eps |\na \rho|^2 \, dx.$$
But as far as we know,  corresponding results for the nonlocal  functional $\G_\eps$ are only known for particular choices of the  kernel $G$:
 Laux and Otto \cite{LO16} proved it when  $G$ is the Gaussian and used it in the study of the thresholding scheme for multi-phase mean-curvature flow
(see also \cite{EO15,EM17,EE18}). 
Jacobs, Kim and M\'esz\'aros \cite{JKM} proved a similar result and used it to derive the Muskat problem. It should be noted however that in these references, the convergence is only proved when $\G_\eps$ is restricted to characteristic functions.
In \cite{KMW2}, we proved the corresponding result when $G$ is given by \eqref{eq:G} in the hard-sphere case $m=+\infty$, and then in \cite{M23} for the same kernel in the soft-sphere case $m\in (2,\infty)$ without restriction to characteristic functions.
We state here the corresponding result in our framework (see \cite{M23} - see also \cite{KMW2} for the case $m=\infty$): 
\begin{proposition}\label{prop:firstvar}
Assume that $G$ is given by \eqref{eq:G} and that $f$ satisfies \eqref{eq:ff}.
Given any sequence $\rho^\eps\in L^1$ which strongly converges to $\rho$ in $L^1(\R^d)$, if 
$$%\begin{equation}\label{eq:cvass}
\lim_{\eps\to 0} \G_\eps (\rho^\eps) =\G_0 (\rho) <\infty,
$$% \end{equation}
then for all $\xi \in C^1_c(\R^d , \R^d)$ we have:
$$
\lim_{\eps\to 0} -  \eps^{-1}   \int_{\R^d}
 (h'(\rho^\eps) - B_\eps[\rho^\eps]) \na \rho^\eps \cdot \xi\, dx  = \gamma \int_{\R^d} \left[  \div \xi - \nu \otimes \nu :D\xi\right] |\na \rho|
$$
where $\nu = \frac{\na \rho}{|\na \rho|}$.
\end{proposition}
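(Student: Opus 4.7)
The plan is to recast the left-hand side as the inner variation of $\G_\eps$ in the ``elliptic'' representation \eqref{eq:Jepschemo}, and then extract a Reshetnyak-type rigidity from the energy-convergence hypothesis. Set $\phi_\eps := G_\eps * \rho^\eps$, which satisfies an elliptic equation of the form $\sigma\phi_\eps - \eps^2\Delta\phi_\eps = \rho^\eps$ (up to absorbing $\eta$ into the normalization). Using $h'(\rho) = f'(\rho) - \sigma^{-1}\rho + a$ together with $B_\eps[\rho^\eps] = \phi_\eps - \sigma^{-1}\rho^\eps$, the chemical potential telescopes to $h'(\rho^\eps) - B_\eps[\rho^\eps] = f'(\rho^\eps) + a - \phi_\eps$. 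Integrating $-\na f(\rho^\eps)\cdot\xi$ and $\phi_\eps\na\rho^\eps\cdot\xi$ by parts, then eliminating $\rho^\eps$ through the elliptic identity and the divergence formula $(\Delta\phi)\na\phi = \div\bigl(\na\phi\otimes\na\phi - \tfrac12 |\na\phi|^2 I\bigr)$, the left-hand side becomes
\[
-\eps^{-1}\int_{\R^d} (h'(\rho^\eps) - B_\eps[\rho^\eps])\na \rho^\eps \cdot \xi\,dx = \int_{\R^d} T_\eps : D\xi\,dx,
\]
with the Cauchy stress tensor
\[
T_\eps := \Bigl[\frac{\mathcal{U}_\eps}{\eps} + \frac{\eps}{2}|\na\phi_\eps|^2\Bigr] I - \eps\,\na\phi_\eps\otimes\na\phi_\eps, \qquad \mathcal{U}_\eps := h(\rho^\eps) + \frac{1}{2\sigma}(\rho^\eps - \sigma\phi_\eps)^2.
\]
The trace of the scalar bracket in $T_\eps$ is exactly the integrand of $\G_\eps$ under \eqref{eq:Jepschemo}, so the problem reduces to showing $T_\eps \rightharpoonup \gamma(I - \nu\otimes\nu)|\na\rho|$ as matrix-valued Radon measures.

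Next I would extract equipartition of energy. Since $\mathcal{U}_\eps \geq g(\phi_\eps)$ pointwise by the very definition of $g$, the AM-GM inequality gives
\[
\G_\eps(\rho^\eps) \geq \int \Bigl[\frac{g(\phi_\eps)}{\eps} + \frac{\eps}{2}|\na\phi_\eps|^2\Bigr]dx \geq \int \sqrt{2g(\phi_\eps)}\,|\na\phi_\eps|\,dx = \int |\na W(\phi_\eps)|\,dx,
\]
with $W(s) := \int_0^s \sqrt{2g(t)}\,dt$. Elliptic regularity and $\rho^\eps \to \theta\chi_E$ in $L^1$ force $\phi_\eps \to \sigma^{-1}\theta\chi_E$ in $L^1_{\mathrm{loc}}$; since $W(\sigma^{-1}\theta) = \gamma\theta$, lower semicontinuity of the total variation yields $\liminf \int|\na W(\phi_\eps)|\,dx \geq \gamma\theta P(E) = \G_0(\rho)$. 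The hypothesis $\G_\eps(\rho^\eps)\to \G_0(\rho)$ therefore forces equality in every step of the chain in the limit, producing (i) the excess vanishes, $\eps^{-1}(\mathcal{U}_\eps - g(\phi_\eps))\,dx \rightharpoonup 0$; (ii) equipartition, $\eps^{-1}g(\phi_\eps)\,dx$ and $\tfrac{\eps}{2}|\na\phi_\eps|^2\,dx$ both converge weakly-$*$ to $\tfrac{\gamma}{2}|\na\rho|$; (iii) strict convergence of the total variation, $\int|\na W(\phi_\eps)|\,dx \to \gamma\theta P(E)$. In particular, the scalar contribution to the diagonal of $T_\eps$, namely $\eps^{-1}\mathcal{U}_\eps + \tfrac{\eps}{2}|\na\phi_\eps|^2$, converges to $\gamma |\na\rho|$ as a scalar Radon measure.

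The final step is to upgrade this to the tensor convergence $\eps\,\na\phi_\eps\otimes\na\phi_\eps\,dx \rightharpoonup \gamma\, \nu\otimes\nu\,|\na\rho|$. Since $\na W(\phi_\eps) = \sqrt{2g(\phi_\eps)}\,\na\phi_\eps$ and $\sqrt{2g}\geq 0$, the direction $\na W(\phi_\eps)/|\na W(\phi_\eps)|$ coincides with $\na\phi_\eps/|\na\phi_\eps|$ wherever defined. The strict total-variation convergence of $W(\phi_\eps)$ together with its $L^1_{\mathrm{loc}}$ convergence allows one to apply Reshetnyak's continuity theorem, yielding $\na W(\phi_\eps)\,dx \to \gamma D\rho$ as $\R^d$-valued Radon measures; hence the directions converge to $\nu$ in the $|\na\rho|$-measure-theoretic sense. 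Combining this directional information with the already established scalar convergence of $\eps|\na\phi_\eps|^2\,dx \to \gamma|\na\rho|$ (which follows from equipartition) produces the desired tensorial limit. Pairing $T_\eps$ with $D\xi$ then gives $\gamma\int(\div\xi - \nu\otimes\nu : D\xi)|\na\rho|$, as claimed.

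The main obstacle is this last Reshetnyak-rigidity step, in the spirit of Luckhaus-Modica~\cite{LM}. The adaptation to the nonlocal setting is delicate because the bulk density $\mathcal{U}_\eps$ couples $\rho^\eps$ and $\phi_\eps$ rather than depending on a single field: a quantitative control of the excess $\mathcal{U}_\eps - g(\phi_\eps)$ is needed to identify $\eps^{-1}\mathcal{U}_\eps\,dx$ with $\tfrac{\gamma}{2}|\na\rho|$ in the limit and thus to pass to the limit in the diagonal entries of $T_\eps$. This is precisely the place where the energy-\emph{convergence} hypothesis, rather than only the $\liminf$ bound of Proposition \ref{prop:Gamma1}, is genuinely used, and it is the crux of the approach in \cite{KMW2,M23}.
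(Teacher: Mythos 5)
Your proposal is correct and follows essentially the route the paper indicates: the paper does not prove Proposition \ref{prop:firstvar} here but defers to \cite{M23,KMW2}, and the key ingredients it highlights --- the representation \eqref{eq:Jepschemo}, the lower bound \eqref{eq:Jepschemo2} via the potential $g$, and the Reshetnyak/Luckhaus--Modica rigidity under energy convergence --- are exactly the stress-tensor decomposition, equipartition, and strict total-variation convergence you lay out. Moreover, the squeeze you set up between $\G_\eps(\rho^\eps)$, $\int[\eps^{-1}g(\phi_\eps)+\tfrac{\eps}{2}|\na\phi_\eps|^2]\,dx$ and $\int|\na W(\phi_\eps)|\,dx$ already delivers the vanishing of the excess $\eps^{-1}(\mathcal{U}_\eps-g(\phi_\eps))$ and of the AM--GM deficit, which is precisely the quantitative control your final paragraph flags as the remaining obstacle, so that step is within reach of your own chain of inequalities.
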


This proposition is the cornerstone of the proof of the following theorem which makes clear the connection between the rescaled aggregation-diffusion equation \eqref{eq:NCH2} and the Hele-Shaw free boundary problem with surface tension \eqref{eq:HS}, where the latter is the gradient flow of the perimeter functional $\G_0$ with respect to the $2$-Wasserstein distance.
\begin{theorem}
\label{thm:HS}
Given a sequence of initial data $\rho_{in}^{\eps_n}$ such that $\G_{\eps_n}(\rho_{in}^{\eps_n})\leq M$ and $\rho_{in}^{\eps_n}\to \rho_{in} = \theta \chi_{E_{in}} \in BV(\R^d;\{0,\theta\})$,  let $\rho^{\eps_n}(t,x)$ be the weak solution 
of \eqref{eq:LSSeps2}  with initial data $\rho_{in}^{\eps_n}$. 
Then the following hold:
\item[(i)] Along a subsequence, the density $\rho^{\eps_n}(t,x)$  converges strongly in $L^\infty(0,T;L^1(\R^d))$ to 
$$\rho(t,x) = \theta \chi_{E(t)} \in L^\infty(0,\infty;\BV(\R^d;\{0,\theta\}))$$
and there exists a velocity function $v(t,x)$ such that 
\begin{equation}\label{eq:contweakHS}
\begin{cases}
\pa_t\rho + \div(\rho v)=0, \\
\rho(x,0) =\theta \chi_{E_{in}}.
\end{cases}
\end{equation}
 \item[(ii)] If the following energy convergence assumption holds:
$$%\begin{equation}\label{eq:EA}
\lim_{n\to\infty} \int_0^T \G_{\eps_n}( \rho^{\eps_n} (t)) \, dt = \int_0^T \G_0(\rho(t))\, dt,
$$% \end{equation}
then there exists $p\in L^2(0,T;(C^s(\R^d))^*)$ (for any $s>0$) such that
\begin{equation}\label{eq:weakp}
\int_0^\infty \int_{\R^d} (\rho v\cdot \xi - \theta p \, \div\xi)\, dx \, dt= -\gamma  \int_0^\infty \int_{\R^d} \left[\div \xi - \nu\otimes\nu:D\xi\right] |\na \rho|\, dt
\end{equation}
for any vector field  $\xi \in C^\infty_c((0,\infty)\times  \R^d ; \R^d)$.\footnote{ The integral $\int_{\R^d} p \,  \div \xi\, dx$ above should be understood as the duality bracket
$\langle q , \div \xi \rangle_{(C^s(\Omega))^*,C^s(\Omega)}$.}
Together, equations \eqref{eq:contweakHS} and \eqref{eq:weakp} say that  $E(t)$  is a weak solution of the Hele-Shaw free boundary problem with surface tension \eqref{eq:HS} with  initial condition $E_{in}$.
\end{theorem}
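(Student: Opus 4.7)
The plan is to treat equation \eqref{eq:LSSeps2} as the $2$-Wasserstein gradient flow of the rescaled energy $\G_\eps$ and extract both compactness and the limit passage from the associated energy dissipation. The rescaling $t\mapsto\eps^{-1}t$ converts $\J_\eps$ into $\G_\eps = \eps^{-1}\J_\eps$, so the natural effective velocity is $V^\eps := -\na\mu^\eps$ with chemical potential $\mu^\eps := \eps^{-1}(h'(\rho^\eps)-B_\eps[\rho^\eps])$, and the dissipation inequality reads
$$
\G_\eps(\rho^\eps(t)) + \int_0^t\!\!\int_{\R^d}\rho^\eps |V^\eps|^2\, dx\, ds \;\leq\; \G_\eps(\rho^\eps_{in}) \;\leq\; M.
$$
Since $\G_\eps\ge 0$, this yields the uniform bounds $\G_\eps(\rho^\eps(t))\le M$ and $\int_0^T\!\!\int \rho^\eps|V^\eps|^2\, dx\, ds\le M$.

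For (i), the pointwise-in-time bound on $\G_\eps$ combined with the coercivity underlying Theorem \ref{thm:Gamma} (Modica--Mortola-type) yields precompactness of $\{\rho^\eps(t)\}$ in $L^1(\R^d)$ for each $t$, with every limit in $BV(\R^d;\{0,\theta\})$. Cauchy--Schwarz applied to the identity $\eps\pa_t\rho^\eps=-\div(\rho^\eps V^\eps)$, combined with the Benamou--Brenier characterization of absolutely continuous curves in $(\mathcal P_2,d_W)$, produces the equicontinuity bound $d_W(\rho^\eps(t_1),\rho^\eps(t_2))\le\sqrt{M|t_2-t_1|}$. A refined Arzel\`a--Ascoli argument (matching the $W_2$-equicontinuity with the pointwise $L^1$ compactness, using strict convexity of the perimeter $\Gamma$-limit away from vacuum) then produces a subsequence along which $\rho^{\eps_n}\to\theta\chi_{E(t)}$ strongly in $L^\infty(0,T;L^1(\R^d))$. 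Moreover the momentum $\mathbf{j}^\eps := \rho^\eps V^\eps$ is bounded in $L^2(0,T;L^1(\R^d;\R^d))$, so up to a further subsequence converges in the sense of distributions to a limit of the form $\rho v$, and passing to the limit in the continuity equation $\eps\pa_t\rho^\eps+\div\mathbf{j}^\eps=0$ yields \eqref{eq:contweakHS}.

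For (ii), I would test $\rho^\eps V^\eps=-\rho^\eps\na\mu^\eps$ against $\xi\in C^\infty_c((0,\infty)\times\R^d;\R^d)$ and integrate by parts in space:
$$
\int_0^\infty\!\!\int_{\R^d}\rho^\eps V^\eps\cdot\xi\,dx\,dt = \int_0^\infty\!\!\int_{\R^d}\mu^\eps\rho^\eps\,\div\xi\,dx\,dt + \int_0^\infty\!\!\int_{\R^d}\mu^\eps\na\rho^\eps\cdot\xi\,dx\,dt.
$$
The left-hand side converges to $\int\!\!\int\rho v\cdot\xi\,dx\,dt$ by part (i). Under the hypothesis $\int_0^T\G_{\eps_n}(\rho^{\eps_n}(t))\,dt\to\int_0^T\G_0(\rho(t))\,dt$, a dominated-convergence/Fatou argument forces $\G_{\eps_n}(\rho^{\eps_n}(t))\to\G_0(\rho(t))$ for a.e.\ $t$, so Proposition \ref{prop:firstvar} applies slicewise and, after verifying uniform-in-$t$ integrability via the dissipation bound, yields
$$
\int_0^\infty\!\!\int_{\R^d}\mu^{\eps_n}\na\rho^{\eps_n}\cdot\xi\,dx\,dt \;\longrightarrow\; -\gamma\int_0^\infty\!\!\int_{\R^d}\bigl[\div\xi-\nu\otimes\nu:D\xi\bigr]|\na\rho|\,dt.
$$
The remaining term $\int_0^T\!\!\int\mu^{\eps_n}\rho^{\eps_n}\div\xi$ is then forced to converge along the same subsequence; defining $\theta p$ as its distributional limit yields a linear functional which, by duality with the bound $\int\rho^\eps|\na\mu^\eps|^2\le M$ and an interpolation between the $L^2$ estimate on $\sqrt{\rho^\eps}\,\na\mu^\eps$ and the uniform $L^1$ control of $\rho^\eps$, can be shown to belong to $L^2(0,T;(C^s(\R^d))^*)$ for every $s>0$. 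Rearranging gives \eqref{eq:weakp}.

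The main obstacle is the construction and regularity of the limit pressure $p$: while the curvature term is handled \emph{exactly} by Proposition \ref{prop:firstvar}, the dissipation bound $\int\rho^\eps|\na\mu^\eps|^2\le M$ degenerates across the interfacial layer where $\rho^\eps$ vanishes, and does not by itself control $\mu^\eps$ there. Ruling out concentration of $\mu^\eps\rho^\eps$ in the transition region, and thereby extracting a distributional limit with just enough regularity to land in $L^2_t(C^s_x)^*$, is where the energy-convergence hypothesis must be genuinely exploited beyond the statement of Proposition \ref{prop:firstvar}, and is the delicate technical core of the argument.
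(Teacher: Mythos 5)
Your outline follows essentially the same route as the paper, whose proof of this theorem consists of citing \cite{KMW2,M23} (which implement exactly your strategy: the dissipation inequality for $\G_\eps$ gives compactness, Proposition \ref{prop:firstvar} applied slicewise in $t$ under the energy-convergence hypothesis gives the curvature term, and $\theta p$ is defined as the distributional limit of $\mu^\eps\rho^\eps$) together with one new ingredient, namely the second-moment bound giving tightness on $\R^d$, which upgrades the $L^\infty(0,T;L^1_{loc})$ convergence of the bounded-domain arguments to $L^\infty(0,T;L^1(\R^d))$ --- a step your sketch asserts via ``refined Arzel\`a--Ascoli'' but does not isolate. The difficulty you correctly flag at the end (the dissipation bound $\int\rho^\eps|\na\mu^\eps|^2\le M$ degenerates where $\rho^\eps$ vanishes, so it does not by itself place $\mu^\eps\rho^\eps$ in $L^2(0,T;(C^s)^*)$) is indeed the technical core; in the cited references it is resolved not by interpolation on the dissipation alone but by a duality argument that constructs, for a given scalar test function, a vector field with prescribed divergence and controlled $C^1$ norm, so that the already-established bounds on $\int\rho^\eps V^\eps\cdot\xi$ and on the first-variation term transfer to a bound on $\langle \mu^\eps\rho^\eps,\div\xi\rangle$ in terms of $\|\div\xi\|_{C^s}$.
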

The notion of weak solution of \eqref{eq:HS} that we recover with this theorem is similar to the definition considered for example in \cite{JKM,KMW2,Laux1}:
The continuity equation \eqref{eq:contweakHS} encodes the incompressibility condition $\div v=0$ in $E(t)$ and the free boundary condition $V=-v\cdot \nu$.
By taking test functions $\xi$ supported in either $E(t)$  or $\R^d\setminus E(t)$, we see that
 \eqref{eq:weakp} implies $\na p=0$ ourtside of $E(t)$ and $  v = -\na p$ in $E(t)$. 
 For general test functions  $\xi$, and taking into account the right hand side of  \eqref{eq:weakp} we further get 
 the surface tension condition $[p]  =  \gamma \kappa $ on $\pa E(t)$
as a consequence of \eqref{eq:mc}. 
In the radial setting, \eqref{eq:mc} implies that $p$  equals the constant curvature $\pa E(t)$ in a strong sense, thus yielding discontinuity of the pressure across the interface.

\begin{proof}
This result is proved in  \cite{KMW2} when $m=\infty$ and \cite{M23} when $m\in(2,\infty)$ when the equation is set in a bounded domain.
The proof can be extended to $\R^d$ with the use of the estimate on the second moment $\int_{\R^d} |x|^2\rho\, dx$.
Indeed, the only required adjustment is in the proof of \cite[Proposition 4.2]{M23}, where the argument only yields the strong convergence of $\rho^{\eps_n}$ to $\rho$ in $L^\infty(0,T;L^1_{loc}(\R^d))$. 
But we can write
\begin{align*}
\limsup \int_{\R^d} |\rho^{\eps_n}(t)-\rho(t)|\, dx &  \leq \limsup \int_{B_R} |\rho^{\eps_n}(t)-\rho(t)|\, dx+\limsup \int_{\R^d\setminus B_R} |\rho^{\eps_n}(t)-\rho(t)|\, dx\\
&  \leq 0 + \frac{1}{R^2} \int_{\R^d} |x|^2(\rho^{\eps_n}(t)+\rho(t)) \, dx
\end{align*}
and the strong  convergence in $L^\infty(0,T;L^1(\R^d))$ follows by taking $R\to\infty$.
\end{proof}
\medskip

\noindent {\bf More general nonlinearity $f$ and interaction kernel $G$:}
Theorem \ref{thm:HS} holds for general convex nonlinearities $f(\rho)$, provided the function $h(\rho)$ is a double-well potential.
For technical reasons, the proof also requires that  $f(\rho)\geq \left( \frac 1 {2\sigma}+\eta\right)\rho^2$ for large $\rho$.
In particular, adding a small linear diffusion term to the equation, which adds the term $\nu \rho\log\rho$ to the function  $h(\rho)$, will  displace the well at $\rho=0$ to some (small) positive value.
In that case we expect the limit $\eps\to0$ to lead to a two-phase  free boundary problem (Mullins-Sekerka).\medskip

While it is not difficult to extend Theorem  \ref{thm:HS} to general nonlinearities, the proofs provided in \cite{KMW2,M23} are strongly dependent on the particular form of the interaction kernel $G$. In fact the proofs make use of the elliptic equation \eqref{eq:G}. 
One could extend the proof to slightly more general equations, for example by replacing the Laplace operator by a general elliptic operator (which would introduce both space and direction dependence in the free boundary condition in \eqref{eq:HS}), but treating the case of general interaction kernels seems to require new ideas.
As noted earlier, the $\Gamma$-convergence of the energy (Theorem \ref{thm:Gamma}) is known to hold for a large class of such kernel, and the main obstacle is rather to prove Proposition \ref{prop:firstvar} in  a more general framework.  
\medskip

\section{Bounded domains and contact angle conditions}\label{sec:ca}
We have so far only considered problems set in the whole space.
But in many experimental settings, the particles are confined to a bounded domain $\Omega \subset \R^d$. The equation for $\rho$ is then set on $\Omega$ and supplemented with no-flux boundary conditions on $\pa\Omega$. 
But the domain can also have a non-trivial influence on the interaction kernel.

In some settings, it might make sense to stick to interactions that depend only on the distance between the particles (mathematically, this is done by extending $\rho$ by zero outside the domain and keeping the convolution $G_\eps * \rho$ in the equation).
But in the context of chemotaxis \eqref{eq:KellerSegel}, the potential $\phi=\phi_\eps =G_\eps*\rho$ represents the concentration of a chemoattractant chemical which is now diffusing in the same bounded domain $\Omega$ so that this convolution should be replaced with the solution of some elliptic boundary value problem.

In the first part of this section, we present results that have been proved in this later setting. We will then discuss   the case of interactions that depend only on the distance between the particles.
\medskip

\subsection{Chemotaxis potential in bounded domains}
In the context of chemotaxis \eqref{eq:KellerSegel},
we replace the convolution $G_\eps*\rho$ by the solution $\phi_\eps$ of the following elliptic boundary value problem:
\begin{equation}\label{eq:phieps}
\begin{cases}
\sigma \phi_\eps-\eps^2 \Delta \phi_\eps = \rho, & \mbox{ in } \Omega;  \\
a \phi_\eps +\eps b \na \phi_\eps\cdot n=0, & \mbox{ on }\pa \Omega.
\end{cases}
\end{equation}
We are taking general Robin type boundary conditions at the boundary $\pa\Omega$ for further generality, even though Neumann conditions ($a=0$) are probably the most natural type of conditions. When $a>0$ \eqref{eq:phieps} takes into account the possible leakage (or destruction) of the chemical at the boundary.
We can take $a$ and $b$ to be constant or functions of $x\in\pa\Omega$ and 
we will always assume that $a\geq 0$, $b\geq 0$ and $a+b>0$.
The solution of \eqref{eq:phieps} can be written as  $\phi_\eps(x) = \int_\Omega G_\eps(x,y)\rho(y)\, dy$ with $G_\eps(x,y)\geq 0$ a symmetric kernel
and we set
$$
B_\eps[\rho] :=  \phi_\eps  - \sigma^{-1}\rho $$
so that \eqref{eq:NCH2} corresponds to
\begin{equation}\label{eq:NCH2bd}
\begin{cases}
\eps \pa_t \rho  = \div(\rho\na h'(\rho) )-  \div(\rho \na B_\eps[\rho]), & \mbox{ in } (0,T)\times \Omega;\\
\rho \na (h'(\rho)- B_\eps[\rho])\cdot n = 0, & \mbox{ on }  (0,T)\times\pa \Omega;\\
\rho(x,0) = \rho_{in}, & \mbox{ in } \Omega.
\end{cases}
\end{equation}

\medskip

However, we do not have $\int_{\Omega} G_\eps(x,y)\, dy =\sigma^{-1}$ so we cannot write $B_\eps[\rho]$ as $\int_\Omega G_\eps(x,y)[\rho(y)-\rho(x)]\, dy$. 
We can still write 
$ \int_{\Omega} G_\eps(x,y)\, dy = \sigma^{-1} - \tau_\eps(x)$
where $\tau_\eps\geq 0$ solves
$$
\begin{cases}
\sigma \tau_\eps - \eps^2 \Delta\tau_\eps =0 , & \mbox{ in } \Omega;  \\
a \tau_\eps +\eps b \na \tau_\eps\cdot n= \frac a \sigma, & \mbox{ on }\pa \Omega.
\end{cases}
$$
We thus have 
$$ B_\eps[\rho] (x) = \int_\Omega G_\eps(x,y)[\rho(y)-\rho(x)]\, dy - \tau_\eps(x) \rho(x)$$
and the corresponding energy functional can be written as
\begin{align*}
\G_\eps[\rho]  & =   \ds\frac{1}{\eps} \int_{\Omega} h(\rho) \, dx - \frac{1}{2\eps} \int_{ \Omega }   \rho B_\eps[\rho] \, dx\nonumber \\
& =   \ds\frac{1}{\eps} \int_{\Omega} h(\rho) \, dx +\frac{1}{4\eps} \iint_{\Omega\times \Omega }  G_\eps(x,y)[\rho(x)-\rho(y)]^2\, dx\,dy+ \frac{1}{2\eps} \int_{ \Omega }  \tau_\eps(x)  |\rho(x)|^2 \, dx.% \label{eq:Gbd}
\end{align*}
The last term in this energy  accounts for the boundary conditions on $\phi_\eps $. 
In the case of Neumann boundary condition $a=0$, we have $\tau_\eps=0$, and this new term vanishes. But in general this new term will contribute to the limiting energy.
Indeed, when $\pa\Omega$ is regular enough, we can show\footnote{To see this, notice that when $\Omega$ is the half space $\{x_d>0\}$, we have $\tau_\eps(x) =\frac{1}{\sigma} \frac{a}{a+b\sqrt\sigma} e^{-\frac{\sqrt \sigma}{\eps}x_d}$ and so $\eps^{-1}\tau_\eps \to \frac{1}{\sigma^{3/2}} \frac{a}{a+b\sqrt\sigma}  \delta(x_d)$.} that $\frac 1 \eps\tau_\eps$  converges as a distribution to  $\frac{1}{\sigma^{3/2}} \frac{a}{a+b\sqrt\sigma} d\mathcal H^{n-1}|_{\pa\Omega}$.
This suggests that if $E$ is a subset of $\Omega$ with $C^1$ boundary, then
$$\G_\eps[\theta \chi_E] \to \gamma\theta P(E,\Omega)   + \frac{1}{2 \sigma^{3/2}} \frac{a \theta }{a+b\sqrt\sigma} \mathcal H^{n-1}(\pa\Omega \cap E),$$
where the local perimeter $P(E,\Omega)$ is defined by
$$
P(E,\Omega) := \sup\left\{\int_E \div g\, dx\, ;\, g\in (C^{1}_0( \Omega))^d, \quad |g(x)|\leq 1 \; \forall x\in \Omega\right\}  = \int_{\Omega} |\na\chi_E|.
$$
This limit was proved rigorously in \cite{MW} (see also \cite[Appendix C]{KMW2}) for the hard-sphere case $f=f_\infty$. In that case $\theta=1$ and $\gamma = \frac{1}{4 \sigma^{3/2}}$, so the formula for the limit becomes
\begin{equation}\label{eq:nlsc}   \frac{1}{4 \sigma^{3/2} }\left( P(E,\Omega)  +   \frac{2a}{a+b\sqrt\sigma} \mathcal H^{n-1}(\pa\Omega \cap E)\right).
\end{equation}
Here we make a couple of remarks: First, functionals like \eqref{eq:nlsc}, in which the area of the free surface $\pa E\cap \Omega$ and the contact area $\pa\Omega\cap E$ are weighted differently, appear naturally in various applications, such as the capillary energy for the modeling of sessile liquid drop   in contact with a solid support. It is well known that the minimization of such energies leads to contact angle conditions. Second, we note that when $ \frac{2a}{a+b\sqrt\sigma} >1$, the energy \eqref{eq:nlsc} is not lower semi-continuous and can therefore not be the $\Gamma$-limit of $\G_\eps$.
In fact, the following result was proved in \cite{MW,KMW2} when $f=f_\infty$:
\begin{theorem}\label{thm:4}
Assume that $\Omega$  is a bounded open set with $C^{1,\alpha}$ boundary and take $f=f_\infty$ and $G$ given by \eqref{eq:G}.  
Assume further that $a$ and $b$ are non-negative constants such that $a+b>0$.
The functional $ \G_\eps $ $\Gamma$-converges, when $\eps\to 0$ to 
$$ \G_{0 }(\rho) 
:=
\begin{cases}
\displaystyle  \frac {1}{4 \sigma^{3/2} } \left[  \int_\Omega |\na \rho|  + \int_{\pa \Omega } \min\left( 1  ,\frac{2 a}{a+\sqrt \sigma b} \right) \rho \, d\H^{n-1}(x) \right] & \mbox{ if } \rho \in \BV(\Omega;\{0,1\});\\
\infty & \mbox{ otherwise.}
\end{cases}
$$
\end{theorem}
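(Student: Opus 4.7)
The plan is to reformulate $\G_\eps$ in a Modica-Mortola-like form by shifting attention from $\rho$ to the chemoattractant $\phi_\eps$ solving \eqref{eq:phieps}. Using $\rho-\sigma\phi_\eps=-\eps^2\Delta\phi_\eps$ and integrating by parts against the Robin condition, I would first derive the identity
$$
\G_\eps[\rho]=\frac{1}{\eps}\int_\Omega\left[h(\rho)+\frac{1}{2\sigma}(\rho-\sigma\phi_\eps)^2\right]dx+\frac{\eps}{2}\int_\Omega|\na\phi_\eps|^2\,dx+\frac{a}{2b}\int_{\pa\Omega}\phi_\eps^2\,d\H^{n-1},
$$
with the natural modification when $b=0$. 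Pointwise minimization of the first integrand in $\rho$ at fixed $\phi_\eps$ introduces the double-well function $g(s):=\inf_{\rho\geq 0}\big[h(\rho)+\tfrac{1}{2\sigma}(\rho-\sigma s)^2\big]$, which for $h=h_\infty$ equals $\tfrac{\sigma}{2}\min\{s^2,(s-\sigma^{-1})^2\}$ with wells at $0$ and $1/\sigma$. This yields the crucial lower bound
$$
\G_\eps[\rho]\geq \frac{1}{\eps}\int_\Omega g(\phi_\eps)\,dx+\frac{\eps}{2}\int_\Omega|\na\phi_\eps|^2\,dx+\frac{a}{2b}\int_{\pa\Omega}\phi_\eps^2\,d\H^{n-1},
$$
a Modica-Mortola functional for $\phi_\eps$ supplemented by a contact penalty.

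For the liminf inequality, given $\rho^\eps\to\rho$ in $L^1$ with $\G_\eps(\rho^\eps)\leq M$, elliptic regularity for \eqref{eq:phieps} yields $\phi_\eps\to\sigma^{-1}\rho$ in $L^1(\Omega)$ and forces $\rho=\chi_E\in\BV(\Omega;\{0,1\})$. The interior contribution is then recovered from the classical Modica-Mortola-Sternberg theorem: the first two terms have liminf at least $\gamma\, P(E,\Omega)$ with $\gamma=\int_0^{1/\sigma}\sqrt{2g(s)}\,ds=\frac{1}{4\sigma^{3/2}}$. The boundary contribution is extracted by a blow-up near $\pa\Omega$ in normal coordinates $(x',s/\eps)$: up to negligible errors, the energy concentrated in the layer above a point $x'\in\pa\Omega\cap E$ is bounded below by $\inf_{c\in[0,1/\sigma]}F(c)$, where
$$
F(c)=\int_c^{1/\sigma}\sqrt{2g(s)}\,ds+\frac{a}{2b}c^2
$$
represents a transition profile from the bulk value $\sigma^{-1}$ down to the trace value $c$ at $\pa\Omega$. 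A case-by-case minimization using the piecewise-quadratic form of $g$ gives
$$
\inf_{c\in[0,1/\sigma]}F(c)=\frac{1}{4\sigma^{3/2}}\,\min\!\left(1,\,\frac{2a}{a+b\sqrt\sigma}\right),
$$
which matches the boundary coefficient in $\G_0$.

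For the limsup, start with $\rho=\chi_E$ for $E$ having $C^2$ boundary meeting $\pa\Omega$ transversally. Construct $\phi_\eps$ explicitly by gluing the one-dimensional Modica-Mortola optimal profile $q$ (solving $q'=\sqrt{2g(q)}$) in a tubular neighborhood of $\pa E\cap\Omega$ with the minimizing profile of $F$ in a tubular neighborhood of $\pa\Omega\cap E$, using cutoffs whose overlap region contributes negligibly. Recover $\rho^\eps$ as the pointwise minimizer of $h(\rho)+\tfrac{1}{2\sigma}(\rho-\sigma\phi_\eps)^2$, making the lower bound of Step 1 asymptotically sharp, and verify $\G_\eps(\rho^\eps)\to\G_0(\rho)$ term by term. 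General $\rho\in\BV(\Omega;\{0,1\})$ is then reached by approximating $E$ by smooth sets and a diagonal extraction.

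The principal obstacle will be the boundary layer analysis supporting both directions: one must rigorously disentangle the contribution of the free interface $\pa E\cap\Omega$ from that of the contact set $\pa\Omega\cap E$, especially near the contact curve $\pa E\cap\pa\Omega$ where the interior and boundary transition layers interact, and simultaneously capture the regime change at $a=b\sqrt\sigma$ that produces the $\min$ in the limit (corresponding to the optimal trace $c^\ast$ leaving the right well of $g$ and saturating at $c^\ast=0$).
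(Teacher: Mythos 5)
Your proposal is correct and follows essentially the same route as the paper's (i.e.\ the proof in \cite{MW,KMW2}): your Step-1 identity is exactly the ``alternative formula for $\G_\eps$'' the paper singles out as the crucial tool (your single boundary term $\frac{a}{2b}\int_{\pa\Omega}\phi_\eps^2$ is the paper's two boundary terms combined via the Robin condition), and the subsequent Modica--Mortola analysis for $\phi_\eps$ with the well $g$, together with the one-dimensional trace optimization $\inf_c F(c)=\frac{1}{4\sigma^{3/2}}\min\bigl(1,\frac{2a}{a+b\sqrt\sigma}\bigr)$, is precisely how the interior perimeter and the relaxed boundary coefficient are obtained there. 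The technical obstacles you flag (decoupling the interface and contact contributions near the triple junction, and the saturation at $c^*=0$ when $a>b\sqrt\sigma$) are indeed where the real work lies, but your plan for handling them is the one the cited proof carries out.
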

The result also holds when $a(x)$, $b(x)$ are bounded, Lipschitz non-negative functions defined on $\pa\Omega$ such that $\alpha(x)+\beta(x) \geq \eta>0$ and  that 
$\H^{n-2}\left(\partial \left\{\frac{2a}{a+\sqrt\sigma b}>1 \right\}\right)<\infty$ (this technical condition ensures that the function $x\mapsto \frac{2a(x)}{a(x)+\sqrt\sigma b(x)}$ does not oscillate too much around the value $1$). 
 A crucial tool for the proof of this result is the following alternative formula for $\G_\eps$:
\begin{align*}
 \G_\eps (\rho)  & =  \frac{1}{2  \eps}\int_\Omega h_\infty(\rho) \, dx
+  \frac{1}{2\sigma \eps}  \int _{\Omega}(\rho-\sigma \phi_\eps )^2 \, dx  + 
\frac{ \eps}{2}  \int_\Omega |\na \phi_\eps | ^2\, dx\nonumber \\
 & \qquad + \frac{\eps^2}{2}  \int_{\pa \Omega}\frac{b}{a+b} |\na \phi_\eps \cdot n|^2 \, d\H^{n-1}(x) +
\frac{1}{2}  \int_{\pa \Omega}\frac{a}{a+b} |\phi_\eps |^2 \, d\H^{n-1}(x), 
 \end{align*}
where $\phi_\eps (x)=\int_\Omega G_\eps(x,y) \rho(y)\, dy$ solves \eqref{eq:phieps}.

Theorem \ref{thm:4} was also proved in  \cite{M23} for the soft-sphere case $f=f_m$ with $m>2$  when $a=0$ (Neumann boundary condition).  The approach of  \cite{MW} and  \cite{M23} can be combined to get the result when $f=f_m$ and for general coefficients $a$ and $b$ (although the formula for the coefficient in the boundary integral is not straightforward).

When $a=0$ then $\G_0(\chi_{E})$ is proportional to the local perimeter $P(E ,\Omega)$ and thus only measures the area of $\pa E\cap \Omega$, while when $b=0$ (Dirichlet boundary conditions), then $\G_0$ is proportional to the full perimeter $P(E)$. 
When  $\frac{2 a}{a+\sqrt \sigma b} \in(0,1)$, the areas of $\pa E\cap\Omega$ and $E\cap \pa\Omega$ are weighted differently.

\medskip

As mentioned above, the boundary term  in $\G_0$ translates into a contact angle condition in the limiting Hele-Shaw free boundary problem:
It was proved in \cite{KMW} that the limit $\eps\to0$ of \eqref{eq:phieps}-\eqref{eq:NCH2bd} leads to the same Hele-Shaw free boundary problem \eqref{eq:HS} in $\Omega$,  supplemented with 
Neumann boundary condition
$$
\na p\cdot n = 0 \qquad\mbox{ on } \pa\Omega$$
and 
the condition (see Figure \ref{fig:a})
 \begin{equation}\label{eq:CAC}
\cos(\alpha):=-\min\left( 1,\frac{2a}{a+\sqrt \sigma b} \right) \,\,\hbox{ on } \pa E(t) \cap \pa\Omega,
\end{equation}
where $\alpha$ denotes the angle between the free surface $\pa E$ and the fixed boundary $\pa\Omega$ (see Figure \ref{fig:a}).
\begin{figure}[H]
 	 		\includegraphics[width=.4\textwidth]{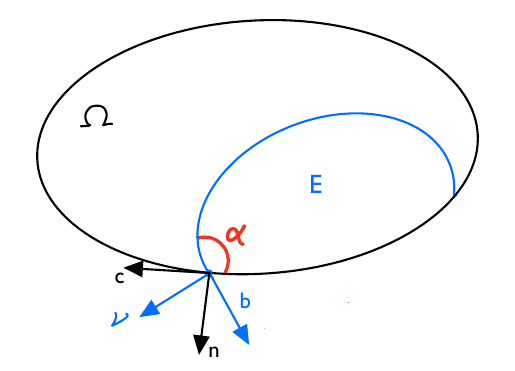}			
\vspace*{-20pt}
 	 		\caption{The contact angle condition}
 	 		\label{fig:a}
 	 	\end{figure}
In particular, for Neuman condition $a=0$ (no absorption of the chemical on $\pa\Omega$) we get $\alpha = \frac{\pi}{2}$ so the contact must be orthogonal, while for Dirichlet condition $b=0$ (and whenever the absorption rate satisfies $a/b \geq \sqrt \sigma $), the contact must be tangential ($\alpha=\pi$).
Let us note that a similar contact angle condition was derived recently in \cite{Laux2} for the geometric mean-curvature flow.

The precise result  in this case, which is proved by establishing the equivalent of Proposition \ref{prop:firstvar} with the boundary term (see \cite[Proposition 5.2]{KMW2}), is as follows:
\begin{theorem}\cite[Theorem 1.6]{KMW2}
\label{thm:HSbd}
Under the assumptions of Theorem \ref{thm:4}, and given a sequence $\eps_n$ and initial data $\rho_{in}^{\eps_n}$ such that $\G_{\eps_n}(\rho_{in}^{\eps_n})\leq M$ and $\rho_{in}^{\eps_n}\to \rho_{in} =  \chi_{E_{in}} \in BV(\Omega)$ as $\varepsilon_n \to 0$,  let $\rho^{\eps_n}(t,x)$ be the weak solution 
of \eqref{eq:NCH2bd}. 

Then the conclusion of Theorem \ref{thm:HS} holds, with \eqref{eq:weakp} replaced by
\begin{align}\label{eq:weak12b}
\int_0^\infty \int_\Omega  \rho v \cdot \xi  - \theta p  \, \div \xi(t) \,dx\,  dt
& = -\gamma \int_0^\infty \int_\Omega \left[  \div \xi - \nu\otimes \nu :D\xi\right] |\na \rho|\, dt \\
& \qquad  -\gamma \min\left( 1,\frac{2a}{a+\sqrt \sigma b} \right)  \int_0^\infty \int_{\pa\Omega} \left[  \div \xi - n\otimes n :D\xi\right]  \rho\,  d\mathcal H^{n-1}(x)\, dt,\nonumber 
\end{align}
for any vector field  $\xi \in C_c^\infty( (0,\infty)\times \overline \Omega ; \R^d)$ such that $\xi \cdot n=0$ on $\pa\Omega$. 
\end{theorem}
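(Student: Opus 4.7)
The proof follows the architecture of Theorem \ref{thm:HS}, with two new ingredients to accommodate the bounded domain: the $\Gamma$-convergence of $\G_\eps$ to a functional that weighs $\pa\Omega\cap E$ by $\min(1,\tfrac{2a}{a+\sqrt\sigma b})$ (Theorem \ref{thm:4}), and the corresponding boundary-aware first-variation convergence, which is the bounded-domain analog of Proposition \ref{prop:firstvar} stated as \cite[Proposition 5.2]{KMW2}. One proceeds in three stages: (i) extract compactness of $\rho^{\eps_n}$ from the Wasserstein gradient-flow dissipation of \eqref{eq:NCH2bd}; (ii) pass to the limit in the continuity equation to obtain $v$ and \eqref{eq:contweakHS}; and (iii) identify the pressure $p$ by pairing the flux $\rho^{\eps_n} v^{\eps_n}$ against test fields tangent to $\pa\Omega$ and invoking the boundary first-variation identity.

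\textbf{Compactness and continuity equation.} The equation \eqref{eq:NCH2bd} is the $2$-Wasserstein gradient flow of $\G_\eps=\eps^{-1}\J_\eps$ on $\P_2(\Omega)$ (the factor $\eps$ on $\pa_t$ being absorbed in the rescaling). The associated energy dissipation inequality reads
\[
\G_{\eps_n}(\rho^{\eps_n}(t)) + \int_0^t \int_\Omega |v^{\eps_n}|^2\, \rho^{\eps_n}\, dx\, ds \leq \G_{\eps_n}(\rho_{in}^{\eps_n}) \leq M,
\]
so $v^{\eps_n}$ is bounded in $L^2(\rho^{\eps_n}\, dx\, dt)$ and $t\mapsto \rho^{\eps_n}(t)$ is uniformly Hölder in $d_W$. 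The liminf inequality of Theorem \ref{thm:4} forces any limit to be $\{0,\theta\}$-valued, while standard nonlocal-to-BV compactness (\emph{à la} \cite{BBM}) promotes this to $\rho^{\eps_n}\to \theta\chi_{E(t)}$ in $L^\infty(0,T;L^1(\Omega))$ along a subsequence. Passing to the distributional limit in the continuity equation identifies $v\in L^2(\rho\, dx\, dt)$ and yields \eqref{eq:contweakHS}.

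\textbf{Pressure identification.} For $\xi\in C_c^\infty((0,\infty)\times\overline\Omega;\R^d)$ with $\xi\cdot n=0$ on $\pa\Omega$, write $\rho^\eps v^\eps = -\rho^\eps\na[h'(\rho^\eps)-B_\eps[\rho^\eps]]$ and decompose the pairing $\int_0^T\!\int_\Omega \rho^\eps v^\eps\cdot \xi$ as the sum of a pressure-like bulk term of the form $\int_0^T\!\int_\Omega h_\ast(\rho^\eps)\,\div\xi\, dx\,dt$, with $h_\ast$ the Legendre transform of $h$, and the first-variation quantity
\[
\mathcal{Q}_{\eps}(\xi) := -\eps^{-1}\int_0^T\int_\Omega \bigl(h'(\rho^\eps) - B_\eps[\rho^\eps]\bigr)\,\na\rho^\eps\cdot\xi\, dx\, dt.
\]
Under the energy convergence assumption, \cite[Proposition 5.2]{KMW2} identifies $\lim_n \mathcal{Q}_{\eps_n}(\xi)$ with the entire right-hand side of \eqref{eq:weak12b}, including the boundary term weighted by $\min(1,\tfrac{2a}{a+\sqrt\sigma b})$. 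The ``pressure'' term converges, modulo a subsequence and by duality against $C^s(\Omega)$, to a distribution $\theta p$ with $p\in L^2(0,T;(C^s(\Omega))^*)$. Combining the two limits yields \eqref{eq:weak12b}.

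\textbf{Main obstacle.} The crux of the argument is the boundary first-variation convergence. Its proof exploits the alternative representation of $\G_\eps$ in terms of $\phi_\eps$, which recasts $\G_\eps$ as a nonlocal Modica-Mortola functional with an additional boundary penalization of $|\phi_\eps|^2$ weighted by $a/(a+b)$. Under the energy-convergence hypothesis, $\phi_{\eps_n}$ is forced to develop a one-dimensional optimal transition profile both in the bulk (giving rise to the constant $\gamma$ on the interior interface) and inside the $O(\eps)$ Robin layer at $\pa\Omega$. The weight $\min(1,\tfrac{2a}{a+\sqrt\sigma b})$ emerges by comparing the energy of the constrained boundary profile with that of the unconstrained transition completed just inside $\Omega$: once the leakage ratio $a/b$ exceeds the threshold $\sqrt\sigma$, it becomes cheaper to complete the phase transition inside $\Omega$ and the weight saturates at~$1$. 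Controlling this thin boundary layer uniformly along the sequence, ruling out oscillations between the two asymptotic regimes, and identifying the cross terms between bulk and boundary profiles, is the most delicate part of the argument and is precisely what forces the admissibility restriction $\xi\cdot n=0$ on $\pa\Omega$.
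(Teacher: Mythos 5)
Your proposal is correct and follows essentially the same route as the paper, which obtains this result (from \cite{KMW2}) precisely by combining the compactness and continuity-equation argument of Theorem \ref{thm:HS} with the boundary-term analog of Proposition \ref{prop:firstvar}, i.e.\ \cite[Proposition 5.2]{KMW2}, under the energy-convergence hypothesis. The only imprecision is cosmetic: the bulk term pairing with $\div \xi$ is $\eps^{-1}\rho^\eps\bigl(h'(\rho^\eps)-B_\eps[\rho^\eps]\bigr)$ (the density times the chemical potential) rather than the Legendre transform of $h$ alone, but this does not change the structure or validity of the argument.
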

The fact that  \eqref{eq:weak12b} implies the  contact angle condition  \eqref{eq:CAC}  can be seen by using the classical formula (for a smooth interface $\Sigma$):
$$%\begin{equation}\label{eq:mca}
\int_\Sigma \div \xi - \nu\otimes\nu : D\xi   = \int_\Sigma \kappa\, \xi\cdot\nu + \int_\Gamma b\cdot \xi,
$$%\end{equation}
where $\nu$ is the normal vector to $\Sigma$,  $\kappa$ denotes the mean curvature of $\Sigma$ and $b$ is the conormal vector along $\Gamma = \pa \Sigma$.
Indeed, formally at least, the right hand side of   \eqref{eq:weak12b} is (using the fact that $\xi\cdot n=0$ on $\pa\Omega$):
\begin{align*}
& - \gamma  \left[  \int_{\Sigma} \kappa \xi\cdot\nu + \int_\Gamma \vec b \cdot \xi \right] + \gamma\min\left( 1,\frac{2a}{a+\sqrt \sigma b} \right)  \left[  \int_{\pa\Omega \cap \Omega_s} \kappa \xi\cdot n + \int_\Gamma \vec c \cdot \xi \right]\\
& =
 \gamma \left[  - \int_{\Sigma} \kappa \xi\cdot\nu +\int_\Gamma  \min\left( 1,\frac{2a}{a+\sqrt \sigma b} \right) \vec c \cdot \xi -\vec b \cdot \xi\right],
\end{align*}
where $\vec b$ and $\vec c$ are unit conormal vectors along $\Gamma = \pa \Sigma \cap \pa\Omega$: $\vec b$ is tangent to $\Sigma$ while $\vec c$ is tangent to $\pa\Omega$ (see Figure \ref{fig:a}).
When $\xi = \vec c$ on $\pa\Omega$, we deduce
$\cos \alpha = \vec b \cdot \vec c = \min\left( 1,\frac{2a}{a+\sqrt \sigma b} \right) $. 

\medskip

\subsection{Interactions dependent only on the distance in a bounded domain}\label{sec:CAB}
While the framework discussed above makes sense in the context of chemical interactions, there are other settings in which interactions depend on the distance between particles, even in the presence of fixed boundaries (or obstacles). This is often the case of social interactions which are facilitated by sight \cite{BT16}. 
Another example is the modeling of cell-cell adhesion \cite{CMSTT,FBC} by an attractive force between cells that are within a given distance from each others and the formation of membraneless organelles in eukaryotic cells  \cite{HN,MK} (in that case, the bounded domain is the cell).
In that case, we have
$$ 
\phi_\eps (x) = \int_\Omega G_\eps(x-y) \rho (y)\, dy =  \int_{\R^d} G_\eps(x-y) \bar \rho (y)\, dy 
$$
where $\bar\rho$ is the extension of $\rho$ to $\R^d$ by $0$.
For simplicity, we focus on the hard-sphere case $f=f_\infty$, although the soft-sphere case could be treated similarly.
The energy functional then reads 
$$
\eps^{-1 } \int_\Omega h_\infty(\rho(x))   - \frac 1 2  \rho(x) \phi_\eps  (x)\, dx =\eps^{-1} \int_{\R^d} h_\infty(\bar \rho(x)) - \frac 1 2 \bar \rho(x) G_\eps * \bar \rho(x)\, dx 
$$
which is the same energy as studied in Section \ref{sec:st} applied to $\bar \rho$ instead of $\rho$.
Its $\Gamma$-limit is proportional to the (full) perimeter functional $ P(E)$. The limit $\eps\to0$ of the aggregation-diffusion equation \eqref{eq:LSSbd} then leads to the Hele-Shaw free boundary problem \eqref{eq:HS} supplemented with the contact angle condition
$$ \cos\alpha = -1$$
that is $\alpha=\pi$: The boundary of the domain  $\pa\Omega$ acts like an hydrophobic surface and any contact of the free surface with the fixed boundary  will be tangential.

\medskip

Other contact angle conditions may be derived by taking into account the interactions of the particles with the boundary of the domain (in the modeling of cell-cell adhesion, the fixed boundary may represent a different species with a different inter-species adhesion strength).
To illustrate this effect,  we introduce the potential $\tau_\eps(x) = G_\eps * \chi_{\R^d\setminus\Omega}$
and consider the equation
\begin{equation}\label{eq:LHSeo}
\begin{cases}
\eps \pa_t \rho + \div(\rho \na (G_\eps * \rho ) ) + \eta  \, \div(\rho \na \tau_\eps) = \div( \rho  \na   p), \qquad p\in \pa f_\infty(\rho), & \mbox{ in } [0,\infty)\times\Omega;  \\
\rho  \na [G_\eps * \rho   + \eta  \tau_\eps- p]\cdot n = 0, & \mbox{ on }[0,\infty)\times \pa \Omega;  \\
\rho(x,0) = \rho^\eps_{in}(x), & \mbox{ in } \Omega.
\end{cases}
\end{equation}
The new drift term appearing in this equation models  the attraction of the particles toward $\R^d\setminus \Omega$ (or repulsion if $\eta<0$).
It describes the interactions of our particles with (different) particles located outside of $\Omega$. These interactions are assumed to be  of similar nature but possibly different strength (another classical example is that of a fluid resting on a solid surface: The fluid molecules interact with each others as well as with the solid). 

The respective strength of the interactions - the coefficient $\eta$ - determines the contact angle condition. 
This can be seen by considering the energy associated to \eqref{eq:LHSeo}, which reads:
$$
\G_{\eta,\eps}(\rho)= \eps^{-1} \int_\Omega  f_\infty(\rho)   + \frac{1}{2\sigma}\rho -\frac 1 2 \rho\phi_\eps -\eta\rho\tau_\eps
\, dx.
$$
Using the fact that $\tau_\eps(x) = \frac1  \sigma - \int_\Omega G_\eps(x-y)\, dy$, we can write 
\begin{align*}
\G_{\eta,\eps}(\rho)
 & = \eps^{-1} \int_\Omega   f_\infty(\rho) +\frac{1}{2} \phi_\eps  (1-\rho)  -\eta\rho\tau_\eps +\frac 1 2\left(\frac 1 \sigma \rho- \phi_\eps  \right)\, dx\\
 & = \eps^{-1} \int_\Omega  f_\infty(\rho) +\frac{1}{2} \phi_\eps  (1-\rho) \, dx + \left(\frac 1 2 -\eta\right) \eps^{-1} \int_\Omega \rho \tau_\eps\, dx.
\end{align*}
This energy has two terms: The first term was studied for example in \cite{JKM} where it was shown (for a different kernel, but the proof extends to \eqref{eq:G} using the arguments of \cite{KMW2}) that 
it $\Gamma$-converges to the local perimeter 
$$ \frac{1}{4\sigma^{3/2}} P(E,\Omega).$$ 
The second term yield a contribution from the boundary.
 When $G$ is the Newtonian attractive kernel \eqref{eq:G}, then
$\tau_\eps$ solves
\begin{equation}\label{eq:tau1}
\tau_\eps-\eps^2\Delta \tau_\eps = \chi_{\R^d\setminus \Omega}.
\end{equation}
When $\Omega $ is the half space $\{x \in \R^d\,; \, x_d>0\}$,  we can solve \eqref{eq:tau1} explicitly and find $\tau_\eps (x)= \frac{1}{2\sigma} e^{-\frac{\sqrt\sigma x_d}{\eps}}$ for $x_d\geq 0$ so that $\frac 1 \eps \tau_\eps(x) \chi_{x_d>0}$ converges to $\frac{1}{2\sigma^{3/2}} \delta(x_d)$ as $\eps\to0$ and 
$$\eps^{-1} \int_{\Omega} \rho \tau_\eps\, dx \to \frac{1}{2\sigma^{3/2}} \int_{\pa\Omega} \rho\, d\mathcal H^{n-1}(x).$$
This convergence can be justified for general domains $\Omega$ with smooth boundary. In fact, we can show:
\begin{proposition}\label{prop:Geta}
Assume that $\Omega$  is a bounded open set with $C^{1,\alpha}$ boundary and take $f=f_\infty$ and $G$ given by \eqref{eq:G}. For all $\eta\in[0,1/2]$,  the energy functional $\G_{\eta,\eps}$ $\Gamma$-converges to 
 \begin{equation}\label{eq:energygta}
 \frac 1 {4\sigma^{3/2}} P(E,\Omega) + \frac 1 {2\sigma^{3/2}} \left(\frac 1 2 -\eta\right) H^{n-1}(\pa\Omega \cap E).
 \end{equation}
\end{proposition}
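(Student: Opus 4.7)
The plan is to split the energy as
\begin{equation*}
\G_{\eta,\eps}(\rho) = \G^{\mathrm{I}}_\eps(\rho) + \left(\tfrac{1}{2}-\eta\right)\G^{\mathrm{II}}_\eps(\rho),
\end{equation*}
where $\G^{\mathrm{I}}_\eps(\rho) := \eps^{-1}\int_\Omega [\, f_\infty(\rho) + \tfrac{1}{2}\phi_\eps(1-\rho)\,] \, dx$ and $\G^{\mathrm{II}}_\eps(\rho) := \eps^{-1}\int_\Omega \rho\,\tau_\eps\, dx$. The $\Gamma$-convergence of $\G^{\mathrm{I}}_\eps$ to $\tfrac{1}{4\sigma^{3/2}} P(E,\Omega)$ is precisely the fact recalled just before the statement: it is proved in \cite{JKM} for the heat kernel and extends to the kernel \eqref{eq:G} via the techniques of \cite{KMW2}. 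The remaining task is to analyse $\G^{\mathrm{II}}_\eps$ and to verify that the sum behaves well under $\Gamma$-convergence.

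The key technical ingredient is the weak-$*$ convergence of Radon measures on $\overline\Omega$,
\begin{equation*}
\eps^{-1}\tau_\eps(x)\, dx \;\rightharpoonup\; \tfrac{1}{2\sigma^{3/2}}\, d\mathcal{H}^{n-1}|_{\pa\Omega},
\end{equation*}
which I would prove in two steps. First, a barrier argument against the explicit half-space solution $\tfrac{1}{2\sigma} e^{-\sqrt\sigma x_d/\eps}$ yields the pointwise bound $0 \leq \tau_\eps(x) \leq \tfrac{1}{2\sigma} e^{-c\, d(x,\pa\Omega)/\eps}$. Second, a localisation and flattening of $\pa\Omega$ in a thin tubular neighbourhood reduces the problem, up to errors vanishing with $\eps$, to a dominated convergence computation against the half-space profile, giving $\eps^{-1}\int_\Omega \phi\, \tau_\eps\, dx \to \tfrac{1}{2\sigma^{3/2}} \int_{\pa\Omega} \phi\, d\mathcal{H}^{n-1}$ for every $\phi \in C(\overline\Omega)$. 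With this in hand, the $\Gamma$-limsup is immediate: the natural recovery sequence $\rho^\eps$ constructed for $\G^{\mathrm{I}}_\eps$ can be chosen to equal $\chi_E$ outside an $O(\eps)$-tubular neighbourhood of $\pa E \cap \Omega$, so its trace on $\pa\Omega$ coincides with $\chi_E|_{\pa\Omega}$ off an $\mathcal{H}^{n-1}$-negligible set, and the measure convergence yields $\lim \G^{\mathrm{II}}_\eps(\rho^\eps) = \tfrac{1}{2\sigma^{3/2}}\mathcal{H}^{n-1}(\pa\Omega \cap E)$.

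The main obstacle is the $\Gamma$-liminf for the sum, since $L^1(\Omega)$ convergence $\rho^\eps \to \chi_E$ does not force the trace of $\rho^\eps$ on $\pa\Omega$ to converge to $\chi_E|_{\pa\Omega}$: one can imagine sequences exhibiting a boundary transition layer of thickness $\sim\eps$ against $\pa\Omega$, on which $\G^{\mathrm{I}}_\eps$ and $\G^{\mathrm{II}}_\eps$ may trade off. The saving grace is the explicit balance between these two terms: an extra boundary jump from $\rho = 1$ to $\rho = 0$ inside an $\eps$-layer against $\pa\Omega$ costs at least $\tfrac{1}{4\sigma^{3/2}}$ per unit $\mathcal{H}^{n-1}$-area in $\G^{\mathrm{I}}_\eps$ (by the one-dimensional Modica--Mortola transition cost underlying \eqref{eq:sigma}), whereas it can reduce $\G^{\mathrm{II}}_\eps$ by at most $\tfrac{1}{2\sigma^{3/2}}(\tfrac{1}{2}-\eta) \leq \tfrac{1}{4\sigma^{3/2}}$ per unit area, with equality only at $\eta = 0$. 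A blow-up argument at a generic boundary point, in the spirit of the proof of Theorem~\ref{thm:4} in \cite{MW,KMW2}, turns this heuristic into the matching liminf inequality and completes the proof. The same balance explains why the assumption $\eta \in [0,\tfrac{1}{2}]$ is essential: for $\eta > \tfrac{1}{2}$ a boundary layer would strictly decrease the total energy, and the functional in \eqref{eq:energygta} would cease to be lower semicontinuous.
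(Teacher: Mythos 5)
Your decomposition $\G_{\eta,\eps}=\G^{\mathrm{I}}_\eps+(\tfrac12-\eta)\G^{\mathrm{II}}_\eps$ is the one displayed in the text before the statement, but the paper's proof does \emph{not} work with it: instead it uses the identity $\int_\Omega\rho\tau_\eps=\int_\Omega(\sigma^{-1}\rho-\phi_\eps)$ to rewrite $\G_{\eta,\eps}$ as the convex combination $(1-2\eta)\,\eps^{-1}\int_{\R^d}\bigl[h_\infty(\bar\rho)+\tfrac12\bar\rho(1-\phi_\eps)\bigr]+2\eta\,\eps^{-1}\int_\Omega\bigl[f_\infty(\rho)+\tfrac12\phi_\eps(1-\rho)\bigr]$, i.e.\ $(1-2\eta)$ times the whole-space energy of the extension $\bar\rho$ (whose $\Gamma$-limit is the \emph{full} perimeter $\tfrac{1}{4\sigma^{3/2}}P(E)$) plus $2\eta$ times the relative-perimeter energy of \cite{JKM}. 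Since both summands are non-negative when $\eta\in[0,1/2]$, the liminf inequality follows by simply adding the two known liminf inequalities, and the identity $P(E)=P(E,\Omega)+\H^{n-1}(\pa\Omega\cap E)$ gives \eqref{eq:energygta}. This is precisely the step where your route is harder: with your splitting, $\G^{\mathrm{II}}_\eps$ is not lower semicontinuous along $L^1$-convergent sequences, and you must quantify the trade-off between the cost of a boundary layer in $\G^{\mathrm{I}}_\eps$ and the savings in $\G^{\mathrm{II}}_\eps$ via a blow-up at the boundary, in the spirit of Theorem~\ref{thm:4}. Your heuristic constants are right (the layer costs the full interior transition energy $\tfrac{1}{4\sigma^{3/2}}$ per unit area, which is exactly the boundary weight at $\eta=0$), and your measure-convergence lemma for $\eps^{-1}\tau_\eps$ is the same computation the paper records; but the blow-up liminf is real work that you only sketch, whereas the paper's algebraic rearrangement makes it unnecessary. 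What your approach buys in exchange is robustness: it does not degenerate when one of the convex-combination coefficients changes sign, which is why a finer analysis of this type is what the paper alludes to for the range $\eta\in(1/2,1)$.

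Your closing remark is wrong, however. The restriction $\eta\le 1/2$ is not essential to the \emph{statement}: the paper explicitly notes that the result holds for all $\eta\in[0,1)$, only the simple proof fails for $\eta>1/2$. Moreover the functional \eqref{eq:energygta} remains lower semicontinuous as long as the boundary weight does not exceed the interior surface tension in absolute value, i.e.\ $\tfrac{1}{2\sigma^{3/2}}\bigl|\tfrac12-\eta\bigr|\le\tfrac{1}{4\sigma^{3/2}}$, which holds for all $\eta\in[0,1]$ — indeed your own cost/savings balance, applied to layers that \emph{add} mass at the boundary, gives exactly the threshold $\eta\le1$, contradicting your claim that lower semicontinuity fails already at $\eta=1/2$.
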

Note that the result holds for $\eta\in[0,1)$ but the proof is more delicate than the proof given below when $\eta>1/2$.
 This Proposition suggests that a result similar to Theorem \ref{thm:HSbd} holds for \eqref{eq:LHSeo}:
 Under some assumptions on the convergence of the energy, the limit $\eps\to 0$ in \eqref{eq:LHSeo} leads to the 
 Hele-Shaw free boundary problem (with surface tension) \eqref{eq:HS}, supplemented with the contact angle condition
$$ \cos \alpha = 2\eta -1$$
at the triple junction. 
We can thus have
contact angles between $\pi $ and $0$ when $\eta $ takes values between $0$ and $1$. When $\eta\geq 1$, the attraction towards the boundary dominates the dynamics and the  particles will spread over the whole boundary to maximize the contact area.
A rigorous justification of this result would require proving the convergence of the first variation of the energy (the equivalent of Proposition \ref{prop:firstvar}).

\begin{proof}[Idea of the proof of Proposition \ref{prop:Geta}]
We note that 
$$ 
\int_\Omega \rho\tau_\eps \, dx
=\int_\Omega \rho*G_\eps \chi_{\R^d\setminus \Omega}  \, dx=
\int_\Omega \phi_\eps  \chi_{\R^d\setminus \Omega}  \, dx
=\int_\Omega \frac 1 \sigma \rho - \phi_\eps    \, dx
$$
and we write
\begin{align*}
\G_{\eta,\eps}(\rho)
& =
 \eps^{-1}(1-2\eta)  \int_\Omega  f_\infty(\rho)   + \frac{1}{2\sigma}\rho -\frac 1 2 \rho\phi_\eps  
\, dx  +  \eps^{-1} 2\eta \int_\Omega  f_\infty(\rho)   + \frac{1}{2\sigma}\rho -\frac 1 2 \rho\phi_\eps -\frac 1 2 \rho\tau_\eps
\, dx\\
& =
 \eps^{-1}(1-2\eta)  \int_{\R^d}  h_\infty(\bar \rho)   + \frac 1 2 \bar \rho ( 1-\phi_\eps )  
\, dx  +  \eps^{-1} 2\eta \int_\Omega  f_\infty(\rho)    + \frac 1 2 \phi_\eps  (1-\rho) 
\, dx
\end{align*}
where every term is non-negative.
The first term is (up to a multiplicative constant) the energy $\G_\eps$ and thus $\Gamma$-converges to  $\frac 1 {4\sigma^{3/2}} (1-2\eta)  P(E)$ (when $\eta\leq 1/2$).
The second term can be written as
$$
\eps^{-1} 2\eta \int_\Omega  f_\infty(\rho)  +  \frac 12 \tau_\eps(x)\rho(x)(1-\rho(x))  \, dx + \frac 1 4 \iint_{\Omega\times\Omega} G(x-y) [\rho(y)-\rho(x)]^2\, dx\,dy.
$$
where the first integral is the double well-potential and the second integral is similar to the energy studied, for example, in \cite{AB98}. This energy $\Gamma$-converges to
$2\eta  \frac 1 {4\sigma^{3/2}} P(E,\Omega)$.
The limiting energy is thus 
$$\frac 1 {4\sigma^{3/2}} (1-2\eta)  P(E) + 2\eta  \frac 1 {4\sigma^{3/2}} P(E,\Omega)$$ which is equivalent to \eqref{eq:energygta}.
\end{proof}

\section{Conclusion}

In this paper, we  presented a road map to investigate phase separation phenomena in biological (and other) systems. We start from an agent-based microscopic description of attractive behavior limited by a volume exclusion principle, accounting for the natural incompressibility of the agents.
The key  takeaway is that phase separation is natural when repulsion is modeled by nonlinear diffusion with $m>2$ (soft-sphere models) or with density constraint (hard-sphere models), and when the long-range attractive kernel is integrable.

Based on recent works, we proposed a rigorous mathematical approach to derive effective models of geometric nature which describe the evolution of the sharp interface over appropriate time scales after aggregation occurs. In particular, the results presented here show that surface tension and contact angle dynamics, classically associated with fluid interfaces, are natural consequences of volume-limited aggregation.
This is in agreement with many real-life observations: for instance, the  liquid-like behavior of
membraneless organelles in eukaryotic cells has been well documented \cite{B09,HN}.

\medskip

This review  leads to several important open questions. Among them is the 
well-posedness of the hard-sphere microscopic model \eqref{eq:NLHS} for general initial conditions and its convergence as $\delta\to 0$ to the macroscopic model \eqref{eq:LHS}.
Solutions to \eqref{eq:incompdelta} can be formally obtained in the limit $m\to\infty$ in the soft-sphere model \eqref{eq:NLSS} or via a JKO type discrete scheme, but neither approaches has been made rigorous. Some of the issues encountered in the latter one are discussed in further details in  \cite{KMJW}.

Concerning the derivation of the free boundary problems, we emphasize once more that all stated rigorous results are conditional, which require an  assumption on the energy convergence. Obtaining non-conditional convergence results remains  out of reach, not only in the context of the problems discussed in this review, but for related free boundary problems as well.
Last but not least, these conditional results have been proved only for the particular kernel $G$, solution of \eqref{eq:G}. On the other hand,
 the $\Gamma$-convergence of the energies $\J_\eps$ and $\G_\eps$ has been proved for very general $G$. Thus we may  conjecture that these conditional results can hold for a large class of non-negative integrable kernels.
 Further potential generalizations of our analysis also include the derivation of  two-phase free boundary problems, either by considering pressure function $f$ for which the two wells of the function $h$ are both positive (leading to a Mullins-Sekerka free boundary problem), or for multi-species models such as those proposed in \cite{FBC}.

\appendix

\section{From micro to macro: convergence $\delta\to 0$} \label{app:conv}
\begin{proof}[Proof of Theorem \ref{thm:convdelta}]
The continuity equation and \eqref{eq:energydeltaine} imply that 
$\rho^\delta$ is bounded  $C^{1/2}([0,T];\mathcal P_2(\R^d))$. It follows from a refined version of Ascoli-Arzel\`a theorem, as in \cite[Proposition 4.1]{carrillo2023nonlocal}, 
that there exists a subsequence such that   $\rho^\delta(t)$ narrowly converge to some function $\bar\rho(t)$ (uniformly in $t$).

Since $K_\delta$ is supported in $B_\delta$, we have $K_\delta * \vphi \to \vphi$ uniformly for any uniformly continuous function $\vphi$. Using the bound on the second moment \eqref{eq:second}, we can then show that $K_\delta * \rho^\delta (t) $ converges narrowly to $\bar \rho(t)$.

We also note that \eqref{eq:energydeltaine} implies that $\rho^\delta*K_\delta$ is bounded in $L^\infty(0,T;L^m(\R^d))$ and thus also in $L^\infty(0,T;L^2(\R^d))$.
Furthermore, using the fact that $\phi^\delta  = G* \rho^\delta$ solves $\sigma \phi^\delta - \eta \Delta \phi^\delta = \rho^\delta$,
we see that $\phi^\delta$ and $K_\delta *\phi^\delta$ are bounded in $ L^\infty(0,T;H^1(\R^d))$.

However, this convergence is not enough to pass to the limit in the equation and the main idea in  \cite{carrillo2023nonlocal} is to rewrite the limiting equation \eqref{eq:LSS} (when $f$ is given by \eqref{eq:fm}) as
\begin{equation}
	\label{eq:pme_wk_str}
\partial_t \rho = - \div(\rho\na G*\rho) + 2\nabla\cdot (\rho^\frac{m}{2}\na \rho^\frac{m}{2})
\end{equation}
and to adjust, in a similar way, the weak formulation of the regularized equation \eqref{eq:NLSS}. Indeed, given a test function $\psi \in C_c^\infty([0,T]\times \R^d)$,  a straightforward computation with integration by parts yields
\begin{align}
\int & \psi(T,x)\, d\rho^\delta(T,x)  - \int \psi(0,x)\, d\rho_{\text{in}}(x) - \int_0^T\int\partial_t\psi \,d\rho^\delta(t,x)\, dt 	\notag\\
&= 
\int_0^T\int    \na (K_\delta*K_\delta*\phi^\delta) \cdot \na \psi \, d\rho^\delta(t,x)\, dt 
-\frac{m}{m-1}\int_0^T\int \na K_\delta*(K_\delta*\rho^\delta)^{m-1}\cdot \na \psi \, d\rho^\delta(t,x)\, dt 	\notag\\
&= \int_0^T\int \na (K_\delta*\phi^\delta) \cdot [K_\delta*(\rho^\delta \, \na \psi)]   
-m\int_0^T\int (K_\delta*\rho^\delta)^{m-2}\nabla (K_\delta*\rho^\delta)\cdot [K_\delta*(\rho^\delta \, \na \psi)] 	\notag\\
&= \int_0^T\int (K_\delta*\rho^\delta) \na (K_\delta * \phi^\delta)  \cdot \na \psi 	  -2\int_0^T\int (K_\delta*\rho^\delta)^\frac{m}{2}\na (K_\delta*\rho^\delta)^\frac{m}{2}\cdot \na \psi 	\label{eq:pme_wk_str_delta}\\
&\quad 
+ \int_0^T\int  \na (K_\delta * \phi^\delta) \cdot E^\delta
-2\int_0^T\int (K_\delta*\rho^\delta)^{\frac m 2 -1}\na(K_\delta*\rho^\delta)^{\frac{m}{2}}\cdot E^\delta. \label{eq:pme_delta_error}
\end{align}
The error term in~\eqref{eq:pme_delta_error} comes from the commutation between convolution and multiplication, i.e. $E^\delta(x) := [K_\delta*(\rho^\delta \na \psi)](x) - (K_\delta * \rho^\delta)\na \psi(x)$.
We will see that this error term vanishes in the limit. Passing to the limit in the second term of~\eqref{eq:pme_wk_str_delta} (which has the same structure as \eqref{eq:pme_wk_str}) requires the following convergences:
\[
(K_\delta*\rho^\delta)^\frac{m}{2}\to \rho^\frac{m}{2}, \quad\text{(strong) in }L_{t,x}^2,\quad \text{and}\quad \na(K_\delta*\rho^\delta)^\frac{m}{2}\rightharpoonup \na \rho^\frac{m}{2}, \quad \text{(weak) in }L_{t,x}^2.
\]
This weak-strong convergence pair is proved by considering the evolution of $\int \rho^\delta \log \rho^\delta$. Indeed, since $\E [\rho_{\text{in}}]<+\infty$ and $\rho\log \rho \lesssim \rho^m$ for $m>1$, we see that $\int \rho_{\text{in}}\log \rho_{\text{in}}<+\infty$. 
Formally (these steps can be made rigorous by using the piecewise constant interpolation given by the JKO scheme as in \cite{carrillo2023nonlocal}), by differentiating $\int \rho^\delta \log \rho^\delta$ in time  we get:
\[
\int \rho^\delta(t,x)\log\rho^\delta(t,x) - \int \rho_{\text{in}}\log \rho_{\text{in}} = - \frac{2}{m}\int_0^t\int |\na (K_\delta *\rho^\delta)^\frac{m}{2}|^2 + \int_0^t\int \na (K_\delta*\phi^\delta) \cdot\na K_\delta*\rho^\delta
\]
where
\begin{align*}
 \int_0^t\!\! \!\int \na (K_\delta*\phi^\delta) \cdot\na K_\delta*\rho^\delta
& = - \int_0^t\!\! \! \int \Delta (K_\delta*\phi^\delta) \, K_\delta*\rho^\delta =- \frac{\sigma}{\eta}\int_0^t \!\! \! \int  (K_\delta *\phi^\delta)  \, K_\delta*\rho^\delta + \frac{1}{\eta} \int_0^t\!\! \! \int  | K_\delta*\rho^\delta|^2 
\end{align*}
and so 
$$
\int \rho^\delta(t,x)\log\rho^\delta(t,x) +
\frac{2}{m}\int_0^t\!\! \! \int |\na (K_\delta *\rho^\delta)^\frac{m}{2}|^2 + 
\frac{\sigma}{\eta}\int_0^t\!\! \! \int  (K_\delta*\phi^\delta) \, K_\delta*\rho^\delta
= \int \rho_{\text{in}}\log \rho_{\text{in}} +\frac{1}{\eta}\int_0^t\!\! \! \int  | K_\delta*\rho^\delta|^2.
$$
We recall (see \cite[Remark 4.2]{carrillo2023nonlocal}) that the entropy  is bounded below by the second moment as follows:
$$\int \rho^\delta(t,x)\log\rho^\delta(t,x) \geq -d\log(2\pi)-\frac 1 2 \int |x|^2\rho^\delta$$
and since $m>2$, we have
\begin{align*}
\int_0^t\int  | K_\delta*\rho^\delta|^2
& \leq C  \int_0^t\int K_\delta*\rho^\delta + |K_\delta*\rho^\delta|^m\, dx\\
& \leq C \left( 1 +  \E_\delta[\rho_{\text{in}}] \right)t.
\end{align*}
This implies that the sequence $(K_\delta *\rho^\delta)^\frac{m}{2}$ is bounded in $L^2(0,T;H^1(\R^d))$ uniformly in $\delta>0$.
Proceeding as in \cite[Proposition 4.3]{carrillo2023nonlocal} (using a refined version of the Aubin-Lions Lemma \cite[Theorem 2]{RS03}) we can then show that up to another subsequence we have
$$ 
K_\delta *\rho^\delta \to \rho \qquad\mbox{ in } L^m([0,T]\times\R^d), \qquad \na (K_\delta *\rho^\delta)^{\frac m 2} \rightharpoonup\na \rho^{\frac m 2}  \mbox{ in } L^2([0,T]\times\R^d).
$$
We already mentioned that $K_\delta*\phi^\delta$ was bounded in $ L^\infty(0,T;H^1(\R^d))$, which implies
$$ \na (K_\delta*\phi^\delta) \rightharpoonup \na \phi \mbox{ in } L^2([0,T]\times\R^d).$$
Finally, (see \cite[Lemma 5.1]{carrillo2023nonlocal}) we have that $E^\delta(t,x)$ converges to zero in $L^m([0,T]\times\R^d)$.
These convergences are enough to show that~\eqref{eq:pme_delta_error} vanishes as $\delta\to 0$ and to pass to the limit in  
~\eqref{eq:pme_wk_str_delta}.
\end{proof}

\section{Convergence results for the Stefan problem}\label{app:Gamma}
\begin{proof}[Proof of Proposition \ref{prop:Gamma1}]
We have $ \J_\eps[\rho^\eps] \geq \J^*[\rho^\eps]$ and the functional $\J^*$ is convex and thus lower semicontinuous with respect to the weak topology. The liminf property follows.

To prove the limsup property, we will simply use the fact that $\J_\eps$ can be bounded by some Allen-Cahn functional for which the $\Gamma$ convergence has been proved elsewhere. Indeed,
we can write
$$
\int\!\!\!\int_{\R^d\times\R^d} G_\eps(x-y)[\rho(x)-\rho(y)]^2\, dx\,dy = \int\!\!\!\int_{\R^d\times\R^d} G_\eps(z)[\rho(y+z)-\rho(y)]^2\, dz\,dy
$$
where
\begin{align*}
 |\rho(y+z) -\rho(y) |^2 
 & = \left| \int_0^1 \na \rho(y+tz) \cdot z \, dt \right|^2\\
 & \leq   \int_0^1 \left| \na \rho(y+tz)\right|^2   \, dt\,  |z|^2  .
 \end{align*}
 We deduce
 \begin{align*}
 \int\!\!\!\int_{\R^d\times\R^d} G_\eps(x-y)[\rho(x)-\rho(y)]^2\, dx\,dy
& \leq  \int_0^1  \int\!\!\!\int_{\R^d\times\R^d} |z|^2 G_\eps(z)\left| \na \rho(y+tz)  \right|^2  \, dz\,dy \, dt\\
& \leq  \int_0^1   \int_{\R^d} |z|^2 G_\eps(z) \int_{\R^d}  \left| \na \rho(y)  \right|^2  \, dy \, dz  \, dt\\
& \leq  \eps^2   \int_{\R^d} |z|^2 G(z) \,dz   \int _{\R^d} \left| \na \rho(y)  \right|^2  \, dy .
\end{align*}
We thus have  
$$ \J_\eps[\rho^\eps] \leq  \int_{\R^d} h(\rho^\eps) \, dx + \eps^2 D  \int _{\R^d} \left| \na \rho^\eps(x)  \right|^2  \, dx$$
for some constant $D$. The functional in the right hand side is an Allen-Cahn functional for which the $\Gamma$ convergence is proved, for example in \cite{Bellettini}. Any recovery sequence for this functional will be a recovery sequence for $\J_\eps$.
\end{proof}

\begin{proof}[Proof of Theorem \ref{thm:stefan}]
The first part of the proof is classical (See for instance \cite[Proposition 4.2]{M23} we recall the main steps here:
We note that $\rho^\eps$ solves \eqref{eq:NCH}
with
$$ \J_\eps[\rho^\eps(t)] + \int_0^t \int_{\R^d} \rho^\eps |v^\eps|^2\, dx\, dt \leq \J_\eps[\rho^\eps_{in}], \quad \sup_{t\in[0,T]}\int_{\R^d} |x|^2 \rho^\eps(t)\, dx \leq C(T). $$
We have in particular (see \cite{AGS08})
$$
d_W(\rho^\eps(t),\rho^\eps(s)) \leq \left(\int_0^T \int_{\R^d} \rho^\eps |v^\eps|^2\, dx\, dt \right)^{1/2} (t-s)^{1/2}
$$
so $\rho^\eps$ is bounded in $C^{1/2} ([0,T];\mathcal P_2(\R^d))$. Together with the bound on the second moment, this implies that  there exists a subsequence $\rho^{\eps_n}$ such that $\rho^{\eps_n}(t)$ narrowly converges to $\rho(t)$  locally uniformly in $t$ (see for example \cite[Proposition 4.1]{BE23}).
In addition, the fact that $m>2$ together with the bound on the energy implies that $\rho^\eps$ is bounded in $L^{\infty}(0,\infty;L^2(\R^d))$ and so $j^\eps =\rho^\eps v^\eps$ is bounded in $L^2(0,\infty;L^{4/3}(\R^d))$. Up to another subsequence, we can thus assume that $j^\eps$ converges to $j$ (weakly) and a classical argument implies that $j=\rho v$ for some $v\in L^2(d\rho(t))$ (see for example \cite[Proposition 4.1]{KMW2}).
We can pass to the limit in the continuity equation to show that $\rho,v$ satisfy the continuity equation 
$$ \pa_t \rho + \div (\rho v)=0.$$

To complete the proof, we will use the assumption of convergence of the energy \eqref{eq:EAS} to pass to the limit in the momentum equation.
First,   since $h(\rho^\eps)\geq h^{**}(\rho^\eps) \geq 0$ and 
$$
\liminf_{\eps\to 0} \int_0^T\int_{\R^d} h^{**}(\rho^\eps) \, dx \, dt \geq \int_0^T\int_{\R^d} h^{**}(\rho) \, dx \, dt
$$
(by the  convexity of $h^{**}$) and since $\frac{1}{4} \iint_{\R^d\times \R^d }  G_\eps(x-y)[\rho(x)-\rho(y)]^2\, dx\,dy \geq 0$, we see that \eqref{eq:EAS}
implies that
\begin{align}
& \int_0^T\int_{\R^d}  h(\rho^{\eps_n}) \vphi \, dx\, dt \to  \int_0^T\int_{\R^d}  h^{**}(\rho) \vphi \, dx\, dt \mbox{ for any  $\vphi\in L^\infty$},\label{eq:hv}\\
& h(\rho^{\eps_n})- h^{**}(\rho^{\eps_n})\to 0 \quad  \mbox{ in } L^1((0,T)\times\R^d),\label{eq:vgjh}\\
&  \int_0^T \iint_{\R^d\times \R^d }  G_{\eps_n}(x-y)[\rho^{\eps_n}(x)-\rho^{\eps_n}(y)]^2\, dx\,dy \, dt \to 0.\label{eq:ge0}
\end{align}
We now write the weak formulation of the momentum equation,  using the fact that $v^{\eps} = -\nabla h'(\rho^\eps) + \nabla B_\e(\rho^\e)$ (see \eqref{eq:NCH}):
\begin{align}
\int_0^T\int_{\R^d} \rho^{\eps_n} v^{\eps_n} \cdot \xi \, dx\,dt
& =  \int_0^T\int_{\R^d} \rho^{\eps_n} \na [-h'(\rho^{\eps_n})+B_\eps[\rho^{\eps_n}]] \cdot \xi \, dx\, dt \nonumber \\
& =  \int_0^T\int_{\R^d} [\rho^{\eps_n}  h'(\rho^{\eps_n})-h(\rho^{\eps_n})]\div \xi  \, dx\, dt -  \int_0^T\int_{\R^d} \rho^{\eps_n} \na B_{\eps_n}[\rho^{\eps_n}] \cdot \xi \, dx\, dt.\label{eq:mom}
\end{align}
We will  complete the proof of the theorem
by passing to the limit in \eqref{eq:mom}
 thanks to the following Lemma:
\begin{lemma}\label{eq:hjg}
Assume \eqref{eq:hv}, \eqref{eq:vgjh}, and \eqref{eq:ge0}. Then:
\item For all $\xi \in C^\infty_c((0,\infty)\times \R^d; \R^d)$, there holds
\begin{equation}\label{eq:lim1}
\lim_{n\to\infty} \int_0^T\int_{\R^d} \rho^{\eps_n} \na B_{\eps_n}[\rho^{\eps_n}] \cdot \xi \, dx\, dt = 0.
\end{equation}
\item For all $\xi \in C^\infty_c((0,\infty)\times \R^d; \R^d)$, there exists a subsequence $\eps_n'$ such that
\begin{equation}\label{eq:lim2}
\lim_{n\to\infty} 
\int_0^T\int_{\R^d} \left[\rho^{\eps_n'}  h'(\rho^{\eps_n'})-h(\rho^{\eps_n'})\right]\div \xi  \, dx\, dt
= \int_0^T\int_{\R^d}  \left[\rho   {h^{**}}'(\rho )-h^{**}(\rho ) \right] \div \xi  \, dx\, dt.
\end{equation}
\end{lemma}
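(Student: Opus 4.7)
The strategy is to exploit the explicit form of the kernel $G$ given by \eqref{eq:G} throughout. Setting $\phi_\eps := G_\eps \ast \rho^\eps$, the elliptic equation $\sigma \phi_\eps - \eps^2 \Delta \phi_\eps = \rho^\eps$ yields the key identity
\[
B_\eps[\rho^\eps] = \phi_\eps - \sigma^{-1}\rho^\eps = \frac{\eps^2}{\sigma} \Delta \phi_\eps,
\]
and a direct computation (substituting the elliptic equation into the nonlocal term as in \eqref{eq:Jepschemo}) rewrites the energy as $\J_\eps[\rho^\eps] = \int h(\rho^\eps)\, dx + \frac{\eps^2}{2} \int |\nabla \phi_\eps|^2 \, dx + \frac{\eps^4}{2\sigma} \int (\Delta \phi_\eps)^2 \, dx$. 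Combined with \eqref{eq:ge0}, this forces
\[
\int_0^T \eps^2 \|\nabla \phi_\eps\|_{L^2}^2 \, dt \to 0 \quad \text{and} \quad \int_0^T \eps^4 \|\Delta \phi_\eps\|_{L^2}^2 \, dt \to 0,
\]
while standard elliptic estimates give $\sup_t \|\phi_\eps\|_{L^2} \leq C$.

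For part (i), I substitute $\rho^\eps = \sigma \phi_\eps - \eps^2 \Delta \phi_\eps$ and rewrite
\[
\int \rho^\eps \nabla B_\eps[\rho^\eps] \cdot \xi \, dx = \eps^2 \int \phi_\eps \nabla \Delta \phi_\eps \cdot \xi \, dx - \frac{\eps^4}{\sigma} \int \Delta \phi_\eps \nabla \Delta \phi_\eps \cdot \xi \, dx.
\]
The second integral equals $-\frac{\eps^4}{2\sigma}\int (\Delta \phi_\eps)^2 \div \xi \, dx$, bounded by $C\eps^4 \|\Delta \phi_\eps\|_{L^2}^2$. For the first, repeated integration by parts together with the classical identities $\Delta \phi \, \nabla \phi = \div(\nabla \phi \otimes \nabla \phi) - \frac{1}{2}\nabla |\nabla \phi|^2$ and $\phi \nabla \phi = \frac{1}{2}\nabla \phi^2$ reduces it to a sum of three terms, each bounded by $C\eps^2(\|\nabla \phi_\eps\|_{L^2}^2 + \|\phi_\eps\|_{L^2}^2)$. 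Integrating in time and using the limits displayed above then sends everything to zero.

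For part (ii), set $P(\rho) := \rho h'(\rho) - h(\rho)$ and $Q(\rho) := \rho (h^{**})'(\rho) - h^{**}(\rho)$. Since $h^{**} = h$ on $[\theta, \infty)$, $h^{**} \equiv 0$ on $[0, \theta]$, and $h(\theta) = h'(\theta) = 0$ by construction (cf.\ the computation after \eqref{eq:hm}), one checks $Q(\rho) = P(\rho)\chi_{\{\rho \geq \theta\}}$. The key structural input is \eqref{eq:vgjh}: since $h - h^{**}$ is uniformly bounded below by a positive constant on $[\delta, \theta - \delta]$ for each $\delta > 0$, the space-time measure of $\{\rho^\eps \in (\delta, \theta - \delta)\}$ vanishes as $\eps \to 0$, so $\rho^\eps$ is asymptotically supported in $[0, \delta] \cup [\theta - \delta, \infty)$. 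Combining \eqref{eq:hv} and \eqref{eq:vgjh} gives $\int h^{**}(\rho^{\eps_n}) \varphi \to \int h^{**}(\rho) \varphi$ for every $\varphi \in L^\infty$; Visintin's theorem then upgrades the weak convergence $\rho^{\eps_n} \rightharpoonup \rho$ to strong $L^1$ (and, via the $L^m$ bound from the energy, strong $L^p$ for every $p < m$) convergence on the set of strict convexity of $h^{**}$, namely $\{\rho > \theta\}$, and shows in addition that $\rho \in \{0\} \cup [\theta, \infty)$ a.e.

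Finally, I split $\int_0^T \int P(\rho^\eps) \div\xi$ according to the three regions $\{\rho^\eps \leq \delta\}$, $\{\delta < \rho^\eps < \theta - \delta\}$, and $\{\rho^\eps \geq \theta - \delta\}$. The first contributes $O(\delta^2)$ since $|P(\rho^\eps)| \leq C\delta^2$ there; the middle mushy region contributes $o_\eps(1)$ since $|P|$ is bounded and the set has vanishing measure; on the third, strong $L^p$ convergence on $\{\rho > \theta\}$ combined with uniform integrability from the $L^m$ bound passes the limit to $\int P(\rho)\div \xi$ restricted to $\{\rho > \theta\}$, which matches $Q(\rho)$. The main obstacle is the set $\{\rho = \theta\}$, where strong convergence is \emph{not} directly available: here I would test \eqref{eq:hv} against $\chi_{\{\rho = \theta\}}$ to get $\int_{\{\rho = \theta\}} h^{**}(\rho^\eps) \to 0$, which together with the localization above and mass balance $\int_{\{\rho = \theta\}} \rho^\eps \to \theta \, |\{\rho = \theta\}|$ forces $\rho^\eps \to \theta$ in measure on $\{\rho = \theta\}$; continuity of $P$ with $P(\theta) = 0$ then makes this contribution vanish, matching $Q(\theta) = 0$. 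Sending $\delta \to 0$ after $\eps \to 0$ completes the proof.
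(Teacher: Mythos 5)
Your proof of \eqref{eq:lim1} is correct and is essentially the paper's own computation: both arguments use the elliptic equation $\sigma\phi_\eps-\eps^2\Delta\phi_\eps=\rho^\eps$ to reduce the drift term to quantities ($\eps^2\|\na\phi_\eps\|_{L^2}^2$ and $\|\sigma^{-1}\rho^\eps-\phi_\eps\|_{L^2}^2=\sigma^{-2}\eps^4\|\Delta\phi_\eps\|_{L^2}^2$) that are controlled by the nonlocal interaction energy and hence vanish by \eqref{eq:ge0}, while the leftover $\eps^2\int\phi_\eps^2$ term is handled by the uniform $L^2$ bound on $\rho^\eps$, exactly as you indicate. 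For \eqref{eq:lim2} your architecture is also the paper's: split according to $\{h^{**}(\rho)=0\}$ versus $\{h^{**}(\rho)>0\}$, obtain strong convergence on the latter from the absence of convexity defect (the paper does this by hand with $w^\eps=\max\{\rho^\eps,\theta\}$; your appeal to Visintin's theorem is a legitimate substitute), and show the contribution of the former tends to zero.

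There is, however, a genuine gap in part (ii): the assertion that Visintin's theorem ``shows in addition that $\rho\in\{0\}\cup[\theta,\infty)$ a.e.'' is false, and your argument silently relies on it. The hypotheses \eqref{eq:hv}--\eqref{eq:ge0} are compatible with $\rho^{\eps}$ oscillating between the two wells $0$ and $\theta$ at a scale large compared with $\eps$ (for instance $\rho^\eps=\theta\chi_{A_\eps}$ with $\chi_{A_\eps}\rightharpoonup\lambda\in(0,1)$), in which case the weak limit equals $\lambda\theta\in(0,\theta)$ on a set of positive measure; no convexity-defect argument can exclude this because $h^{**}$ is affine on $[0,\theta]$. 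Consequently the set $\{0<\rho<\theta\}$, which you never address (you treat only $\{\rho>\theta\}$ and $\{\rho=\theta\}$), may have positive measure, and there neither strong convergence nor your mass-balance argument applies: $\rho^\eps$ need not converge in measure at all on that set. The repair is the one the paper applies to the whole set $E=\{h^{**}(\rho)=0\}=\{\rho\le\theta\}$: testing \eqref{eq:hv} with $\chi_E$ gives $h(\rho^{\eps})\to0$ in $L^1(E)$, and since $P(s)=sh'(s)-h(s)$ vanishes exactly at the two zeros $s=0,\theta$ of $h$ and satisfies $|P(s)|\le\delta+C_\delta h(s)$ for every $\delta>0$, one obtains $\int_E P(\rho^\eps)\,\div\xi\,dx\,dt\to0=\int_E[\rho\,{h^{**}}'(\rho)-h^{**}(\rho)]\,\div\xi\,dx\,dt$ directly, with no need to distinguish $\{\rho=0\}$, $\{0<\rho<\theta\}$ and $\{\rho=\theta\}$. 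With this replacement (which also supersedes your more elaborate mass-balance step on $\{\rho=\theta\}$) the proof closes.
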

This lemma implies (for a given $\xi \in C^\infty_c((0,\infty)\times \R^d; \R^d)$) and with the corresponding subsequence such that \eqref{eq:lim2} holds)
$$
\int_0^T\int_{\R^d} \rho v \cdot \xi \, dx\,dt
=\lim_{n\to\infty}\int_0^T\int_{\R^d} \rho^{\eps_n'} v^{\eps_n'} \cdot \xi \, dx\,dt
= \int_0^T\int_{\R^d}  \left[\rho   {h^{**}}'(\rho )-h^{**}(\rho ) \right] \div \xi  \, dx\, dt
$$
and since this holds for all  $\xi \in C^\infty_c((0,\infty)\times \R^d; \R^d)$, the theorem follows.
\end{proof}
It remains now to prove the above lemma.

\begin{proof}[Proof of Lemma \ref{eq:hjg}]
In order to prove \eqref{eq:lim1}, we use the same computation used in \cite{MW,KMW2} by writing
\begin{align}
\frac 1 2  \int_0^T  \iint_{\R^d\times \R^d }  G_\eps(x-y) & [\rho(x)-\rho(y)]^2\, dx\,dy\\
& = \int_0^T \iint_{\R^d\times \R^d }  \rho \left[\frac 1 \sigma \rho-\phi_\eps \right]\, dx\,dy\nonumber \\
& = \sigma \int_0^T \iint_{\R^d\times \R^d }   \left[\frac 1 \sigma \rho-\phi_\eps \right]^2 \, dx\,dy    +  \int_0^T \iint_{\R^d\times \R^d }  \phi ^\eps\left[\frac 1 \sigma \rho-\phi_\eps \right]\, dx\,dy\nonumber \\
& = \sigma \int_0^T \iint_{\R^d\times \R^d }   \left[\frac 1 \sigma \rho-\phi_\eps \right]^2 \, dx\,dy   +\frac{\eps^2 }{\sigma}  \int_0^T \iint_{\R^d\times \R^d } | \na \phi_\eps  |^2\, dx\,dy\nonumber
\end{align}
%so \eqref{eq:ge0} implies that both these integrals converge to $0$.
and 
\begin{align*}
 \int_0^T\!\! \!\int_{\R^d} \rho^\eps \na B_\eps[\rho^\eps] \cdot \xi \, dx\, dt
 & =  \int_0^T\!\! \!\int_{\R^d} \rho^\eps \na [\phi_\eps  -\sigma^{-1}\rho^\eps] \cdot \xi \, dx\, dt\\
 & =  \int_0^T\!\! \!\int_{\R^d} [ \rho^\eps -\sigma\phi_\eps ]\na [\phi_\eps  -\sigma^{-1}\rho^\eps] \cdot \xi \, dx\, dt+
  \int_0^T\!\! \!\int_{\R^d} \!\! \! -\sigma\phi_\eps \na [\phi_\eps  -\sigma^{-1}\rho^\eps] \cdot \xi \, dx\, dt\\
 & =\frac { \sigma }{2}\int_0^T\!\! \!\int_{\R^d}  [\phi_\eps  -\sigma^{-1}\rho^\eps]^2 \div \xi \, dx\, dt  +  \eps^2 \int_0^T\!\! \!\int_{\R^d} \phi_\eps  \na \Delta \phi_\eps    \cdot \xi \, dx\, dt\\
 & =\frac { \sigma }{2}\int_0^T\!\! \!\int_{\R^d}  [\phi_\eps  -\sigma^{-1}\rho^\eps]^2 \div \xi \, dx\, dt  \\
 & \quad+  \eps^2 \int_0^T\!\! \!\int_{\R^d} \frac{|\na \phi_\eps |^2}{2}\div\xi + D\xi :\na \phi_\eps \otimes\phi_\eps  + \phi_\eps  \na \phi_\eps \cdot  \Delta\xi  \, dx\, dt
\end{align*}
which together imply
$$
\left| \int_0^T\int_{\R^d} \rho^{\eps_n} \na B_{\eps_n}[\rho^{\eps_n}] \cdot \xi \, dx\, dt\right| \leq C(\xi)    \int_0^T \iint_{\R^d\times \R^d }  G_{\eps_n}(x-y)[\rho^{\eps_n}(x)-\rho^{\eps_n}(y)]^2\, dx\,dy.
$$
Thus \eqref{eq:ge0} yields \eqref{eq:lim1}.

\medskip

We now turn to the proof of \eqref{eq:lim2}.
Given $\xi \in C^\infty_c((0,\infty)\times \R^d; \R^d)$, we fix $Q=(0,T)\times B_R$ such that $\supp \xi\in Q$ and 
we introduce the measurable sets
$$E= \{ h^{**}(\rho)=0\}\cap Q , \qquad F= \{ h^{**}(\rho)>0\} \cap Q.  $$
First we claim that there exists a subsequence $\eps_n'$ such that
\begin{equation}\label{eq:claim1}
\lim_{n\to\infty} \iint_E \left[\rho^{\eps_n'}  h'(\rho^{\eps_n'})-h(\rho^{\eps_n'}) \right] \div \xi  \, dx\, dt \to 0.
\end{equation} 
Indeed, using \eqref{eq:hv}, we get $h(\rho^{\eps_n'}) \to 0 $ a.e. in $E$ (for some subsequence).
Furthermore, 
for any $\delta>0$, there exists $C_\delta$ such that $|\rho h'(\rho)-h(\rho) | \leq \delta + C_\delta h(\rho)$ (this is because the function in the left hand side vanishes when $\rho=0$ or $\theta$ which are the only zeroes of $h$ and the two functions have the same growth at $\infty$). 
We deduce
$$ \limsup_{n\to\infty } \iint_E \left|\rho^{\eps_n'}  h'(\rho^{\eps_n'})-h(\rho^{\eps_n'}) \right| |\div \xi | \, dx\, dt 
\leq \|\div \xi\|_{L^\infty}\limsup_{n\to\infty }   \iint_E  \delta + C_\delta h(\rho^{\eps_n'})  \, dx\, dt \leq  \|\div \xi\|_{L^\infty} |Q| \delta$$
and  \eqref{eq:claim1}  follows by taking $\delta\to 0$.
\medskip

Next, we will prove that the strict convexity if $h^{**}$ in $\{h^{**}(s)>0\}$ implies the strong convergence of $\rho^\eps$ to $\rho$ in $F$.
We define
$w^\eps = \max\{\rho^\eps,\theta\}$, which only takes values in the set where $\rho\mapsto h^{**}(\rho) $ is  strictly convex.
Since $w^{\eps_n}$ and $\rho^{\eps_n}$ are bounded in $L^2(Q)$, they both have some weak limit $w$ and $\rho$ (up to some  subsequence). And since $w^{\eps_n}\geq \rho^{\eps_n}$, we have $w\geq\rho$.
Furthermore, we have 
$$ h^{**}(w^{\eps_n}) = h^{**}(\rho^{\eps_n}) \leq h(\rho^{\eps_n})$$
(the first equality holds since $h^{**}(\rho)=0$ when $\rho<\theta$) and the convexity of $h^{**}$ implies
$$\int_Q h^{**}(w) \leq \liminf  \int_Q h^{**}(w^{\eps_n'})\leq  \limsup  \int_Q h^{**}(w^{\eps_n'})\leq  \limsup  \int_Q h(\rho^{\eps_n'}) = \int_Q h^{**}(\rho) \leq \int_Q h^{**}(w)$$
where we used the assumption of convergence of the energy \eqref{eq:EAS}. 
We deduce that we have equality in all these inequalities. In particular
$$ \lim  \int_Q h^{**}(w^{\eps_n'}) = \int_Q h^{**}(w) $$
and 
$$ \int_Q h^{**}(\rho) = \int_Q h^{**}(w).$$
The first equality and strict convexity of $h^{**}$ in the set of values of $w^{\eps_n'}(t,x)$ implies that 
$w^{\eps_n'}$ converges strongly (and a.e.) to $w$. The second equality (together with the fact that $w\geq\rho$ and the strict monotonicity of $h^{**}$ in $(\theta,\infty)$) implies that $\rho=w$ in $F$.
Finally, since $w^{\eps_n'}\geq \rho^{\eps_n'}$, we have 
$$\int_{ F} |w^{\eps_n'}-\rho^{\eps_n'}| = \int_{F} w^{\eps_n'}-\rho^{\eps_n'} \to \int_{F} w-\rho = 0$$
and so $\rho^{\eps_n'}$ converges to $w=\rho$ in $L^1(F)$ and a.e. (up to yet another subsequence).
Together with \eqref{eq:claim1}, this implies \eqref{eq:lim2}.
\end{proof}

\bibliographystyle{plain}
\bibliography{mybib}

\def\cprime{$'$}
\begin{thebibliography}{100}

\bibitem{AB98}
Giovanni Alberti and Giovanni Bellettini.
\newblock A non-local anisotropic model for phase transitions: asymptotic
  behaviour of rescaled energies.
\newblock {\em European J. Appl. Math.}, 9(3):261--284, 1998.

\bibitem{AKY}
Damon Alexander, Inwon Kim, and Yao Yao.
\newblock Quasi-static evolution and congested crowd transport.
\newblock {\em Nonlinearity}, 27(4):823--858, 2014.

\bibitem{VS}
Volkening Alexandria and Sandstede Bj\"orn.
\newblock Modelling stripe formation in zebrafish: an agent-based approach.
\newblock {\em J. R. Soc. Interface}, 12, 2015.

\bibitem{ABC}
Nicholas~D. Alikakos, Peter~W. Bates, and Xinfu Chen.
\newblock Convergence of the {C}ahn-{H}illiard equation to the {H}ele-{S}haw
  model.
\newblock {\em Arch. Rational Mech. Anal.}, 128(2):165--205, 1994.

\bibitem{AGS08}
Luigi Ambrosio, Nicola Gigli, and Giuseppe Savar\'{e}.
\newblock {\em Gradient flows in metric spaces and in the space of probability
  measures}.
\newblock Lectures in Mathematics ETH Z\"{u}rich. Birkh\"{a}user Verlag, Basel,
  second edition, 2008.

\bibitem{BR}
Jacob Bedrossian and Nancy Rodr\'{\i}guez.
\newblock Inhomogeneous {P}atlak-{K}eller-{S}egel models and aggregation
  equations with nonlinear diffusion in {$\Bbb{R}^d$}.
\newblock {\em Discrete Contin. Dyn. Syst. Ser. B}, 19(5):1279--1309, 2014.

\bibitem{BRB}
Jacob Bedrossian, Nancy Rodr{\'\i}guez, and Andrea~L Bertozzi.
\newblock Local and global well-posedness for aggregation equations and
  {P}atlak--{K}eller--{S}egel models with degenerate diffusion.
\newblock {\em Nonlinearity}, 24(6):1683, 2011.

\bibitem{Bellettini}
Giovanni Bellettini, Lorenzo Bertini, Mauro Mariani, and Matteo Novaga.
\newblock Convergence of the one-dimensional {C}ahn-{H}illiard equation.
\newblock {\em SIAM J. Math. Anal.}, 44(5):3458--3480, 2012.

\bibitem{BFG06}
Giovanni Bellettini, Giorgio Fusco, and Nicola Guglielmi.
\newblock A concept of solution and numerical experiments for forward-backward
  diffusion equations.
\newblock {\em Discrete Contin. Dyn. Syst.}, 16(4):783--842, 2006.

\bibitem{BT16}
Andrew~J. Bernoff and Chad~M. Topaz.
\newblock Biological aggregation driven by social and environmental factors: a
  nonlocal model and its degenerate {C}ahn-{H}illiard approximation.
\newblock {\em SIAM J. Appl. Dyn. Syst.}, 15(3):1528--1562, 2016.

\bibitem{BDegond16}
Adrien Blanchet and Pierre Degond.
\newblock Topological interactions in a {B}oltzmann-type framework.
\newblock {\em J. Stat. Phys.}, 163(1):41--60, 2016.

\bibitem{BDegond17}
Adrien Blanchet and Pierre Degond.
\newblock Kinetic models for topological nearest-neighbor interactions.
\newblock {\em J. Stat. Phys.}, 169(5):929--950, 2017.

\bibitem{BBM}
Jean Bourgain, Haim Brezis, and Petru Mironescu.
\newblock Another look at {S}obolev spaces.
\newblock In {\em Optimal control and partial differential equations}, pages
  439--455. IOS, Amsterdam, 2001.

\bibitem{B09}
C.~P. et~al. Brangwynne.
\newblock Germline p granules are liquid droplets that localize by controlled
  dissolution/condensation.
\newblock {\em Science}, 324:1729--1732, 2009.

\bibitem{BJW19}
Didier Bresch, Pierre-Emmanuel Jabin, and Zhenfu Wang.
\newblock On mean-field limits and quantitative estimates with a large class of
  singular kernels: application to the {P}atlak-{K}eller-{S}egel model.
\newblock {\em C. R. Math. Acad. Sci. Paris}, 357(9):708--720, 2019.

\bibitem{BE23}
Martin Burger and Antonio Esposito.
\newblock Porous medium equation and cross-diffusion systems as limit of
  nonlocal interaction.
\newblock {\em Nonlinear Anal.}, 235:Paper No. 113347, 30, 2023.

\bibitem{CH58}
John~W. Cahn and John~E. Hilliard.
\newblock {Free Energy of a Nonuniform System. I. Interfacial Free Energy}.
\newblock {\em The Journal of Chemical Physics}, 28:258--267, 1958.

\bibitem{CC06}
Vincent Calvez and Jos\'{e}~A. Carrillo.
\newblock Volume effects in the {K}eller-{S}egel model: energy estimates
  preventing blow-up.
\newblock {\em J. Math. Pures Appl. (9)}, 86(2):155--175, 2006.

\bibitem{CDFLS11}
J.~A. Carrillo, M.~DiFrancesco, A.~Figalli, T.~Laurent, and D.~Slep\v{c}ev.
\newblock Global-in-time weak measure solutions and finite-time aggregation for
  nonlocal interaction equations.
\newblock {\em Duke Math. J.}, 156(2):229--271, 2011.

\bibitem{CHVY}
J.~A. Carrillo, S.~Hittmeir, B.~Volzone, and Y.~Yao.
\newblock Nonlinear aggregation-diffusion equations: Radial symmetry and long
  time asymptotics.
\newblock {\em arXiv preprint arXiv:1603.07767}, 2022.

\bibitem{CCY19}
Jos\'{e}~A. Carrillo, Katy Craig, and Yao Yao.
\newblock Aggregation-diffusion equations: dynamics, asymptotics, and singular
  limits.
\newblock In {\em Active particles. {V}ol. 2. {A}dvances in theory, models, and
  applications}, Model. Simul. Sci. Eng. Technol., pages 65--108.
  Birkh\"{a}user/Springer, Cham, 2019.

\bibitem{CDW23}
Jos\'{e}~A. Carrillo, Matias~G. Delgadino, and Jeremy S.~H. Wu.
\newblock Convergence of a particle method for a regularized spatially
  homogeneous {L}andau equation.
\newblock {\em Math. Models Methods Appl. Sci.}, 33(5):971--1008, 2023.

\bibitem{carrillo2023degenerate}
Jos\'e~A. Carrillo, Charles Elbar, and Jakub Skrzeczkowski.
\newblock Degenerate {C}ahn-{H}illiard systems: From nonlocal to local.
\newblock {\em arXiv preprint arXiv:2303.11929}, 2023.

\bibitem{CMV}
Jos\'{e}~A. Carrillo, Robert~J. McCann, and C\'{e}dric Villani.
\newblock Contractions in the 2-{W}asserstein length space and thermalization
  of granular media.
\newblock {\em Arch. Ration. Mech. Anal.}, 179(2):217--263, 2006.

\bibitem{CMSTT}
Jose~A. Carrillo, Hideki Murakawa, Makoto Sato, Hideru Togashi, and Olena
  Trush.
\newblock A population dynamics model of cell-cell adhesion incorporating
  population pressure and density saturation.
\newblock {\em Journal of Theoretical Biology}, 474:14--24, 2019.

\bibitem{CCH14}
Jos\'{e}~Antonio Carrillo, Young-Pil Choi, and Maxime Hauray.
\newblock The derivation of swarming models: mean-field limit and {W}asserstein
  distances.
\newblock In {\em Collective dynamics from bacteria to crowds}, volume 553 of
  {\em CISM Courses and Lect.}, pages 1--46. Springer, Vienna, 2014.

\bibitem{CCP19}
Jos\'{e}~Antonio Carrillo, Katy Craig, and Francesco~S. Patacchini.
\newblock A blob method for diffusion.
\newblock {\em Calc. Var. Partial Differential Equations}, 58(2):Paper No. 53,
  53, 2019.

\bibitem{carrillo2023nonlocal}
Jos\'e~Antonio Carrillo, Antonio Esposito, and Jeremy Sheung-Him Wu.
\newblock Nonlocal approximation of nonlinear diffusion equations.
\newblock {\em arXiv preprint arXiv:2302.08248}, 2023.

\bibitem{CD22a}
Louis-Pierre Chaintron and Antoine Diez.
\newblock Propagation of chaos: a review of models, methods and applications.
  {I}. {M}odels and methods.
\newblock {\em Kinet. Relat. Models}, 15(6):895--1015, 2022.

\bibitem{CD22b}
Louis-Pierre Chaintron and Antoine Diez.
\newblock Propagation of chaos: a review of models, methods and applications.
  {II}. {A}pplications.
\newblock {\em Kinet. Relat. Models}, 15(6):1017--1173, 2022.

\bibitem{Chen96}
Xinfu Chen.
\newblock Global asymptotic limit of solutions of the {C}ahn-{H}illiard
  equation.
\newblock {\em J. Differential Geom.}, 44(2):262--311, 1996.

\bibitem{CGL12}
Rinaldo~M Colombo, Mauro Garavello, and Magali L{\'e}cureux-Mercier.
\newblock A class of nonlocal models for pedestrian traffic.
\newblock {\em Mathematical Models and Methods in Applied Sciences},
  22(04):1150023, 2012.

\bibitem{CB16}
Katy Craig and Andrea~L. Bertozzi.
\newblock A blob method for the aggregation equation.
\newblock {\em Math. Comp.}, 85(300):1681--1717, 2016.

\bibitem{CEHT23}
Katy Craig, Karthik Elamvazhuthi, Matt Haberland, and Olga Turanova.
\newblock A blob method for inhomogeneous diffusion with applications to
  multi-agent control and sampling.
\newblock {\em Math. Comp.}, 92(344):2575--2654, 2023.

\bibitem{CKY}
Katy Craig, Inwon Kim, and Yao Yao.
\newblock Congested aggregation via {Newtonian} interaction.
\newblock {\em Archive for Rational Mechanics and Analysis}, 227(1):1--67,
  2018.

\bibitem{CT20}
Katy Craig and Ihsan Topaloglu.
\newblock Aggregation-diffusion to constrained interaction: minimizers \&
  gradient flows in the slow diffusion limit.
\newblock {\em Ann. Inst. H. Poincar\'{e} C Anal. Non Lin\'{e}aire},
  37(2):239--279, 2020.

\bibitem{DDP23}
Noemi David, Tomasz D\c{e}biec, and Beno\^{i}t Perthame.
\newblock Convergence rate for the incompressible limit of nonlinear
  diffusion-advection equations.
\newblock {\em Ann. Inst. H. Poincar\'{e} C Anal. Non Lin\'{e}aire},
  40(3):511--529, 2023.

\bibitem{DP}
Noemi David and Beno{\^\i}t Perthame.
\newblock Free boundary limit of a tumor growth model with nutrient.
\newblock {\em Journal de Math{\'e}matiques Pures et Appliqu{\'e}es},
  155:62--82, 2021.

\bibitem{DS}
Noemi David and Markus Schmidtchen.
\newblock On the incompressible limit for a tumour growth model incorporating
  convective effects.
\newblock {\em arXiv preprint arXiv:2103.02564}, 2021.

\bibitem{DHV}
Pierre Degond, Sophie Hecht, and Nicolas Vauchelet.
\newblock Incompressible limit of a continuum model of tissue growth for two
  cell populations.
\newblock {\em Netw. Heterog. Media}, 15(1):57--85, 2020.

\bibitem{DM90}
Pierre Degond and Francisco-Jos\'{e} Mustieles.
\newblock A deterministic approximation of diffusion equations using particles.
\newblock {\em SIAM J. Sci. Statist. Comput.}, 11(2):293--310, 1990.

\bibitem{Delgadino}
Matias~G. Delgadino.
\newblock Convergence of a one-dimensional {C}ahn-{H}illiard equation with
  degenerate mobility.
\newblock {\em SIAM J. Math. Anal.}, 50(4):4457--4482, 2018.

\bibitem{DXY}
Matias~G. Delgadino, Xukai Yan, and Yao Yao.
\newblock Uniqueness and nonuniqueness of steady states of
  aggregation-diffusion equations.
\newblock {\em Comm. Pure Appl. Math.}, 75(1):3--59, 2022.

\bibitem{DFR15}
M.~Di~Francesco and M.~D. Rosini.
\newblock Rigorous derivation of nonlinear scalar conservation laws from
  follow-the-leader type models via many particle limit.
\newblock {\em Arch. Ration. Mech. Anal.}, 217(3):831--871, 2015.

\bibitem{DiMe16}
Simone Di~Marino and Alp\'{a}r~Rich\'{a}rd M\'{e}sz\'{a}ros.
\newblock Uniqueness issues for evolution equations with density constraints.
\newblock {\em Math. Models Methods Appl. Sci.}, 26(9):1761--1783, 2016.

\bibitem{D79}
R.~L. Dobru\v{s}in.
\newblock Vlasov equations.
\newblock {\em Funktsional. Anal. i Prilozhen.}, 13(2):48--58, 96, 1979.

\bibitem{elbar2022degenerate}
Charles Elbar and Jakub Skrzeczkowski.
\newblock Degenerate {C}ahn-{H}illiard equation: From nonlocal to local.
\newblock {\em arXiv preprint arXiv:2208.08955}, 2022.

\bibitem{elliott}
Charles~M. Elliott.
\newblock The {S}tefan problem with a nonmonotone constitutive relation.
\newblock volume~35, pages 257--264. 1985.
\newblock Special issue: IMA conference on crystal growth (Oxford, 1985).

\bibitem{EG96}
Charles~M. Elliott and Harald Garcke.
\newblock On the {C}ahn-{H}illiard equation with degenerate mobility.
\newblock {\em SIAM J. Math. Anal.}, 27(2):404--423, 1996.

\bibitem{EHKO1986}
CM~Elliott, MA~Herrero, JR~King, and JR~Ockendon.
\newblock The mesa problem: Diffusion patterns for {$u_t=\Delta u^m$} as
  $m\to\infty$.
\newblock {\em IMA journal of applied mathematics}, 37(2):147--154, 1986.

\bibitem{EE18}
Matt Elsey and Selim Esedo\={g}lu.
\newblock Threshold dynamics for anisotropic surface energies.
\newblock {\em Math. Comp.}, 87(312):1721--1756, 2018.

\bibitem{EM17}
Selim Esedo\={g}lu and Matt Jacobs.
\newblock Convolution kernels and stability of threshold dynamics methods.
\newblock {\em SIAM J. Numer. Anal.}, 55(5):2123--2150, 2017.

\bibitem{EO15}
Selim Esedo\={g}lu and Felix Otto.
\newblock Threshold dynamics for networks with arbitrary surface tensions.
\newblock {\em Comm. Pure Appl. Math.}, 68(5):808--864, 2015.

\bibitem{FT22}
Simone Fagioli and Oliver Tse.
\newblock On gradient flow and entropy solutions for nonlocal transport
  equations with nonlinear mobility.
\newblock {\em Nonlinear Anal.}, 221:Paper No. 112904, 35, 2022.

\bibitem{FBC}
Carles Falc\'o, Ruth~E. Baker, and Jos\'e~A. Carrillo.
\newblock A local continuum model of cell-cell adhesion.
\newblock {\em arXiv preprint}, arxiv:2206.14461, 2022.

\bibitem{FM}
Sylvain Faure and Bertrand Maury.
\newblock Crowd motion from the granular standpoint.
\newblock {\em Math. Models Methods Appl. Sci.}, 25(3):463--493, 2015.

\bibitem{FP08}
Alessio Figalli and Robert Philipowski.
\newblock Convergence to the viscous porous medium equation and propagation of
  chaos.
\newblock {\em ALEA Lat. Am. J. Probab. Math. Stat.}, 4:185--203, 2008.

\bibitem{FKS}
Michael Fischer, Laura Kanzler, and Christian Schmeiser.
\newblock One-dimensional short-range nearest-neighbor interaction and its
  nonlinear diffusion limit.
\newblock {\em arXiv preprint arXiv:2302.12099}, 2023.

\bibitem{lebo97}
Giambattista Giacomin and Joel~L. Lebowitz.
\newblock Phase segregation dynamics in particle systems with long range
  interactions. {I}. {M}acroscopic limits.
\newblock {\em J. Statist. Phys.}, 87(1-2):37--61, 1997.

\bibitem{lebo98}
Giambattista Giacomin and Joel~L. Lebowitz.
\newblock Phase segregation dynamics in particle systems with long range
  interactions. {II}. {I}nterface motion.
\newblock {\em SIAM J. Appl. Math.}, 58(6):1707--1729, 1998.

\bibitem{GQ03}
Omar Gil and F~Quir{\'o}s.
\newblock Boundary layer formation in the transition from the porous media
  equation to a {Hele-Shaw} flow.
\newblock {\em Annales de l'IHP Analyse non lin{\'e}aire}, 20(1):13--36, 2003.

\bibitem{Glasner}
Karl Glasner.
\newblock A diffuse interface approach to {H}ele-{S}haw flow.
\newblock {\em Nonlinearity}, 16(1):49--66, 2003.

\bibitem{G16}
Fran\c{c}ois Golse.
\newblock On the dynamics of large particle systems in the mean field limit.
\newblock In {\em Macroscopic and large scale phenomena: coarse graining, mean
  field limits and ergodicity}, volume~3 of {\em Lect. Notes Appl. Math.
  Mech.}, pages 1--144. Springer, [Cham], 2016.

\bibitem{gotz}
I.~G. G\"{o}tz and B.~B. Zaltzman.
\newblock Nonincrease of mushy region in a nonhomogeneous {S}tefan problem.
\newblock {\em Quart. Appl. Math.}, 49(4):741--746, 1991.

\bibitem{GKM}
Nestor Guillen, Inwon Kim, and Antoine Mellet.
\newblock A {Hele-Shaw} limit without monotonicity.
\newblock {\em Archive for Rational Mechanics and Analysis}, pages 1--40, 2022.

\bibitem{HLP}
Qingyou He, Hai-Liang Li, and Beno{\^\i}t Perthame.
\newblock Incompressible limits of {Patlak-Keller-Segel} model and its
  stationary state.
\newblock {\em arXiv preprint arXiv:2203.13099}, 2022.

\bibitem{HV17}
Sophie Hecht and Nicolas Vauchelet.
\newblock Incompressible limit of a mechanical model for tissue growth with
  non-overlapping constraint.
\newblock {\em Commun. Math. Sci.}, 15(7):1913--1932, 2017.

\bibitem{Laux2}
Sebastian Hensel and Tim Laux.
\newblock {B}{V} solutions for mean curvature flow with constant contact angle:
  {A}llen-{C}ahn approximation and weak-strong uniqueness.
\newblock {\em arXiv preprint arXiv: 2112.11150}, 2021.

\bibitem{HP}
Thomas Hillen and Kevin~J Painter.
\newblock A user's guide to {PDE} models for chemotaxis.
\newblock {\em Journal of mathematical biology}, 58(1):183--217, 2009.

\bibitem{HN}
T.~Hirose, K.~Ninomiya, and S.~et~al. Nakagawa.
\newblock A guide to membraneless organelles and their various roles in gene
  regulation.
\newblock {\em Nat Rev Mol Cell Biol}, 24:288--304, 2023.

\bibitem{Hollig}
Klaus H\"{o}llig.
\newblock Existence of infinitely many solutions for a forward backward heat
  equation.
\newblock {\em Trans. Amer. Math. Soc.}, 278(1):299--316, 1983.

\bibitem{H02}
Roger~L. Hughes.
\newblock A continuum theory for the flow of pedestrians.
\newblock {\em Transportation Research Part B: Methodological}, 36(6):507--535,
  2002.

\bibitem{Igbida}
Noureddine Igbida.
\newblock ${L}^1$-theory for reaction-diffusion hele-shaw flow with linear
  drift.
\newblock {\em arXiv preprint arXiv:2105.00182}, 2021.

\bibitem{Jabin2017}
Pierre-Emmanuel Jabin and Zhenfu Wang.
\newblock {\em Mean Field Limit for Stochastic Particle Systems}, pages
  379--402.
\newblock Springer International Publishing, Cham, 2017.

\bibitem{JW18}
Pierre-Emmanuel Jabin and Zhenfu Wang.
\newblock Quantitative estimates of propagation of chaos for stochastic systems
  with {$W^{-1,\infty}$} kernels.
\newblock {\em Invent. Math.}, 214(1):523--591, 2018.

\bibitem{JKM}
Matt Jacobs, Inwon Kim, and Alp{\'a}r~R M{\'e}sz{\'a}ros.
\newblock Weak solutions to the {Muskat} problem with surface tension via
  optimal transport.
\newblock {\em Archive for Rational Mechanics and Analysis}, 239(1):389--430,
  2021.

\bibitem{JKT21}
Matt Jacobs, Inwon Kim, and Jiajun Tong.
\newblock Darcy's law with a source term.
\newblock {\em Archive for Rational Mechanics and Analysis}, 239(3):1349--1393,
  2021.

\bibitem{JKO}
Richard Jordan, David Kinderlehrer, and Felix Otto.
\newblock The variational formulation of the {Fokker-Planck} equation.
\newblock {\em SIAM journal on mathematical analysis}, 29(1):1--17, 1998.

\bibitem{Tadmor}
Trygve~K Karper, Erik Lindgren, and Eitan Tadmor.
\newblock A new model for chemotaxis with saturation effects and its
  incompressible limit.
\newblock {\em Preprint}.

\bibitem{Keller_Segel}
Evelyn~F Keller and Lee~A Segel.
\newblock Model for chemotaxis.
\newblock {\em Journal of theoretical biology}, 30(2):225--234, 1971.

\bibitem{KMJW}
Inwon Kim, Antoine Mellet, and Jeremy Sheung-Him Wu.
\newblock Congestion dynamics - nonlocal and mesoscopic perspectives.
\newblock {\em Work in preparation}, 2023.

\bibitem{KMW2}
Inwon Kim, Antoine Mellet, and Yijing Wu.
\newblock Density-constrained chemotaxis and {H}ele-{S}haw flow.
\newblock {\em arXiv preprint arXiv: 2204.11917}, 2022.

\bibitem{KMW}
Inwon Kim, Antoine Mellet, and Yijing Wu.
\newblock A density-constrained model for chemotaxis.
\newblock {\em Nonlinearity}, 36(2):1082--1119, 2023.

\bibitem{KP}
Inwon Kim and Norbert Po{\v{z}}{\'a}r.
\newblock Porous medium equation to {Hele-Shaw} flow with general initial
  density.
\newblock {\em Transactions of the American Mathematical Society},
  370(2):873--909, 2018.

\bibitem{BKP}
Inwon Kim, Norbert Po{\v{z}}{\'a}r, and Brent Woodhouse.
\newblock Singular limit of the porous medium equation with a drift.
\newblock {\em Advances in Mathematics}, 349:682--732, 2019.

\bibitem{KT}
Inwon Kim and Olga Turanova.
\newblock Uniform convergence for the incompressible limit of a tumor growth
  model.
\newblock {\em Ann. Inst. H. Poincar\'{e} Anal. Non Lin\'{e}aire},
  35(5):1321--1354, 2018.

\bibitem{KPS}
Inwon~C Kim, Beno{\^\i}t Perthame, and Panagiotis~E Souganidis.
\newblock Free boundary problems for tumor growth: a viscosity solutions
  approach.
\newblock {\em Nonlinear Analysis}, 138:207--228, 2016.

\bibitem{K05}
R.~Kowalczyk.
\newblock Preventing blow-up in a chemotaxis model.
\newblock {\em J. Math. Anal. Appl.}, 305(2):566--588, 2005.

\bibitem{Laux1}
Milan Kroemer and Tim Laux.
\newblock The {H}ele-{S}haw flow as the sharp interface limit of the
  {C}ahn-{H}illiard equation with disparate mobilities.
\newblock {\em arXiv preprint arXiv: 2111.14505}, 2021.

\bibitem{LW11}
Aim{\'e} Lachapelle and Marie-Therese Wolfram.
\newblock On a mean field game approach modeling congestion and aversion in
  pedestrian crowds.
\newblock {\em Transportation research part B: methodological},
  45(10):1572--1589, 2011.

\bibitem{LO16}
Tim Laux and Felix Otto.
\newblock Convergence of the thresholding scheme for multi-phase mean-curvature
  flow.
\newblock {\em Calculus of Variations and Partial Differential Equations},
  55(5):1--74, 2016.

\bibitem{LM01}
Pierre-Louis Lions and Sylvie Mas-Gallic.
\newblock Une m\'{e}thode particulaire d\'{e}terministe pour des \'{e}quations
  diffusives non lin\'{e}aires.
\newblock {\em C. R. Acad. Sci. Paris S\'{e}r. I Math.}, 332(4):369--376, 2001.

\bibitem{LDR}
Q.X. Liu, A.~Doelman, and Rottsch\"afer~V. et~al.
\newblock Phase separation explains a new class of self-organized spatial
  patterns in ecological systems.
\newblock {\em Proc. Natl. Acad. Sci. U.S.A.}, 110, 2013.

\bibitem{LM}
Stephan Luckhaus and Luciano Modica.
\newblock The {Gibbs-Thompson} relation within the gradient theory of phase
  transitions.
\newblock {\em Archive for Rational Mechanics and Analysis}, 107(1):71--83,
  1989.

\bibitem{Lush}
Pavel~M. Lushnikov, Nan Chen, and Mark Alber.
\newblock Macroscopic dynamics of biological cells interacting via chemotaxis
  and direct contact.
\newblock {\em Physical review E.}, 78, 2008.

\bibitem{MG87}
Sylvie Mas-Gallic.
\newblock A deterministic particle method for the linearized {B}oltzmann
  equation.
\newblock In {\em Proceedings of the conference on mathematical methods applied
  to kinetic equations ({P}aris, 1985)}, volume~16, pages 855--887, 1987.

\bibitem{M18}
Bertrand Maury.
\newblock Congested transport and microscopic and macroscopic scales.
\newblock In {\em European {C}ongress of {M}athematics}, pages 427--442. Eur.
  Math. Soc., Z\"{u}rich, 2018.

\bibitem{MRS}
Bertrand Maury, Aude Roudneff-Chupin, and Filippo Santambrogio.
\newblock A macroscopic crowd motion model of gradient flow type.
\newblock {\em Mathematical Models and Methods in Applied Sciences},
  20(10):1787--1821, 2010.

\bibitem{MRSV}
Bertrand Maury, Aude Roudneff-Chupin, Filippo Santambrogio, and Juliette Venel.
\newblock Handling congestion in crowd motion modeling.
\newblock {\em Netw. Heterog. Media}, 6(3):485--519, 2011.

\bibitem{Maury11}
Bertrand Maury and Juliette Venel.
\newblock A discrete contact model for crowd motion.
\newblock {\em ESAIM Math. Model. Numer. Anal.}, 45(1):145--168, 2011.

\bibitem{M23}
Antoine Mellet.
\newblock Hele-shaw flow as a singular limit of a keller-segel system with
  nonlinear diffusion.
\newblock {\em arXiv preprint arXiv:2305.04742}, 2023.

\bibitem{MPQ}
Antoine Mellet, Beno{\^\i}t Perthame, and Fernando Quiros.
\newblock A {Hele-Shaw} problem for tumor growth.
\newblock {\em Journal of Functional Analysis}, 273(10):3061--3093, 2017.

\bibitem{MW}
Antoine Mellet and Yijing Wu.
\newblock {$\Gamma$}-convergence of some nonlocal perimeters in bounded subsets
  of $\mathbb{R}^n$ with general boundary conditions.
\newblock {\em arXiv preprint arXiv:2202.03549}, 2022.

\bibitem{MK}
D.M. Mitrea and R.W. Kriwacki.
\newblock Phase separation in biology; functional organization of a higher
  order.
\newblock {\em Cell Commun. Signal}, 14, 2016.

\bibitem{M87}
Luciano Modica.
\newblock The gradient theory of phase transitions and the minimal interface
  criterion.
\newblock {\em Arch. Rational Mech. Anal.}, 98(2):123--142, 1987.

\bibitem{MM77}
Luciano Modica and Stefano Mortola.
\newblock Un esempio di {$\Gamma ^{-}$}-convergenza.
\newblock {\em Boll. Un. Mat. Ital. B (5)}, 14(1):285--299, 1977.

\bibitem{MCO05}
Daniela Morale, Vincenzo Capasso, and Karl Oelschl\"{a}ger.
\newblock An interacting particle system modelling aggregation behavior: from
  individuals to populations.
\newblock {\em J. Math. Biol.}, 50(1):49--66, 2005.

\bibitem{MP18}
Sebastien Motsch and Diane Peurichard.
\newblock From short-range repulsion to {H}ele-{S}haw problem in a model of
  tumor growth.
\newblock {\em J. Math. Biol.}, 76(1-2):205--234, 2018.

\bibitem{O01}
K.~Oelschl\"{a}ger.
\newblock A sequence of integro-differential equations approximating a viscous
  porous medium equation.
\newblock {\em Z. Anal. Anwendungen}, 20(1):55--91, 2001.

\bibitem{O90}
Karl Oelschl\"{a}ger.
\newblock Large systems of interacting particles and the porous medium
  equation.
\newblock {\em J. Differential Equations}, 88(2):294--346, 1990.

\bibitem{OttoL}
Felix Otto.
\newblock Dynamics of labyrinthine pattern formation in magnetic fluids: a
  mean-field theory.
\newblock {\em Arch. Rational Mech. Anal.}, 141(1):63--103, 1998.

\bibitem{Otto01}
Felix Otto.
\newblock The geometry of dissipative evolution equations: the porous medium
  equation.
\newblock {\em Comm. Partial Differential Equations}, 26(1-2):101--174, 2001.

\bibitem{HP02}
Kevin~J. Painter and Thomas Hillen.
\newblock Volume-filling and quorum-sensing in models for chemosensitive
  movement.
\newblock {\em Can. Appl. Math. Q.}, 10(4):501--543, 2002.

\bibitem{PE}
J.K. Parrish and L.~Edelstein-Keshet.
\newblock Complexity, pattern, and evolutionary trade-offs in animal
  aggregation.
\newblock {\em Science}, 284:99--101, 1999.

\bibitem{Patlak}
Clifford~S Patlak.
\newblock Random walk with persistence and external bias.
\newblock {\em The bulletin of mathematical biophysics}, 15(3):311--338, 1953.

\bibitem{Pego}
R.~L. Pego.
\newblock Front migration in the nonlinear {C}ahn-{H}illiard equation.
\newblock {\em Proc. Roy. Soc. London Ser. A}, 422(1863):261--278, 1989.

\bibitem{PQV}
Beno\^{i}t Perthame, Fernando Quir\'{o}s, and Juan~Luis V\'{a}zquez.
\newblock The {H}ele-{S}haw asymptotics for mechanical models of tumor growth.
\newblock {\em Arch. Ration. Mech. Anal.}, 212(1):93--127, 2014.

\bibitem{PV15}
Beno\^{i}t Perthame and Nicolas Vauchelet.
\newblock Incompressible limit of a mechanical model of tumour growth with
  viscosity.
\newblock {\em Philos. Trans. Roy. Soc. A}, 373(2050):20140283, 16, 2015.

\bibitem{P07}
Robert Philipowski.
\newblock Interacting diffusions approximating the porous medium equation and
  propagation of chaos.
\newblock {\em Stochastic Process. Appl.}, 117(4):526--538, 2007.

\bibitem{plot99}
P.~I. Plotnikov.
\newblock Forward-backward parabolic equations and hysteresis.
\newblock {\em Zap. Nauchn. Sem. S.-Peterburg. Otdel. Mat. Inst. Steklov.
  (POMI)}, 233:183--209, 257--258, 1996.

\bibitem{plot97}
P.~I. Plotnikov.
\newblock Passage to the limit with respect to a small parameter in
  {C}ahn-{H}illiard equations.
\newblock {\em Sibirsk. Mat. Zh.}, 38(3):638--656, iii, 1997.

\bibitem{Reshetnyak}
Ju.~G. Re\v{s}etnjak.
\newblock The weak convergence of completely additive vector-valued set
  functions.
\newblock {\em Sibirsk. Mat. .}, 9:1386--1394, 1968.

\bibitem{RS23}
Matthew Rosenzweig and Sylvia Serfaty.
\newblock Global-in-time mean-field convergence for singular {R}iesz-type
  diffusive flows.
\newblock {\em Ann. Appl. Probab.}, 33(2):754--798, 2023.

\bibitem{RS03}
Riccarda Rossi and Giuseppe Savar\'{e}.
\newblock Tightness, integral equicontinuity and compactness for evolution
  problems in {B}anach spaces.
\newblock {\em Ann. Sc. Norm. Super. Pisa Cl. Sci. (5)}, 2(2):395--431, 2003.

\bibitem{Russo}
Giovanni Russo.
\newblock Deterministic diffusion of particles.
\newblock {\em Comm. Pure Appl. Math.}, 43(6):697--733, 1990.

\bibitem{serfaty}
Etienne Sandier and Sylvia Serfaty.
\newblock Gamma-convergence of gradient flows with applications to
  {G}inzburg-{L}andau.
\newblock {\em Comm. Pure Appl. Math.}, 57(12):1627--1672, 2004.

\bibitem{slem91}
M.~Slemrod.
\newblock Dynamics of measure valued solutions to a backward-forward heat
  equation.
\newblock {\em J. Dynam. Differential Equations}, 3(1):1--28, 1991.

\bibitem{S88}
Peter Sternberg.
\newblock The effect of a singular perturbation on nonconvex variational
  problems.
\newblock {\em Arch. Rational Mech. Anal.}, 101(3):209--260, 1988.

\bibitem{Sugiyama}
Yoshie Sugiyama.
\newblock Time global existence and asymptotic behavior of solutions to
  degenerate quasi-linear parabolic systems of chemotaxis.
\newblock {\em Differential Integral Equations}, 20(2):133--180, 2007.

\bibitem{S91}
Alain-Sol Sznitman.
\newblock Topics in propagation of chaos.
\newblock In {\em \'{E}cole d'\'{E}t\'{e} de {P}robabilit\'{e}s de
  {S}aint-{F}lour {XIX}---1989}, volume 1464 of {\em Lecture Notes in Math.},
  pages 165--251. Springer, Berlin, 1991.

\bibitem{TBL}
Chad~M. Topaz, Andrea~L. Bertozzi, and Mark~A. Lewis.
\newblock A nonlocal continuum model for biological aggregation.
\newblock {\em Bull. Math. Biol.}, 68(7):1601--1623, 2006.

\bibitem{Vaz2007}
Juan~Luis V\'{a}zquez.
\newblock {\em The porous medium equation}.
\newblock Oxford Mathematical Monographs. The Clarendon Press, Oxford
  University Press, Oxford, 2007.
\newblock Mathematical theory.

\end{thebibliography}

\end{document}